\documentclass{amsart}
\usepackage{hyperref}

\usepackage{amssymb, amsmath}
\usepackage{mathrsfs}
\usepackage{amscd}
\usepackage{verbatim}
\usepackage{a4wide}

\usepackage{enumerate}

\usepackage[fleqn,tbtags]{mathtools}
\mathtoolsset{showonlyrefs,showmanualtags}


\theoremstyle{plain}
\newtheorem{theorem}{Theorem}[section]
\theoremstyle{remark}
\newtheorem{remark}[theorem]{Remark}
\newtheorem{example}[theorem]{Example}
\theoremstyle{plain}
\newtheorem{corollary}[theorem]{Corollary}
\newtheorem{lemma}[theorem]{Lemma}
\newtheorem{proposition}[theorem]{Proposition}
\newtheorem{definition}[theorem]{Definition}

\numberwithin{equation}{section}


\def\N{{\mathbb N}}

\def\R{{\mathbb R}}
\def\C{{\mathbb C}}


\newcommand{\E}{{\mathbb E}}
\renewcommand{\P}{{\mathbb P}}
\newcommand{\F}{{\mathscr F}}


\renewcommand{\a}{\alpha}
\renewcommand{\b}{\beta}
\newcommand{\g}{\gamma}
\renewcommand{\d}{\delta}
\newcommand{\e}{\varepsilon}
\renewcommand{\l}{\lambda}

\renewcommand{\O}{\Omega}


\newcommand{\Schw}{\mathscr{S}}
\newcommand{\Dom}{\mathsf{D}}
\newcommand{\Ran}{\mathsf{R}}


\newcommand{\calL}{{\mathscr L}}
\newcommand{\n}{\Vert}
\newcommand{\one}{{{\bf 1}}}
\newcommand{\embed}{\hookrightarrow}
\newcommand{\s}{^*}
\newcommand{\lb}{\langle}
\newcommand{\rb}{\rangle}

\newcommand{\limn}{\lim_{n\to\infty}}

\newcommand{\wh}{\widehat}
\newcommand{\wt}{\widetilde}

\newcommand{\OO}{\mathcal{O}}
\newcommand{\Tr}{{\rm Tr}}
\newcommand{\Ext}{{\rm Ext}}
\allowdisplaybreaks

\begin{document}

\title[Maximal $\g$-regularity]{Maximal $\g$-regularity}

\author{Jan van Neerven}
\address{Delft Institute of Applied Mathematics\\
Delft University of Technology \\ P.O. Box 5031\\ 2600 GA Delft\\The
Netherlands} \email{J.M.A.M.vanNeerven@tudelft.nl}

\author{Mark Veraar}
\address{Delft Institute of Applied Mathematics\\
Delft University of Technology \\ P.O. Box 5031\\ 2600 GA Delft\\The
Netherlands} \email{M.C.Veraar@tudelft.nl}

\author{Lutz Weis}
\address{Institut f\"ur Analysis \\
Universit\"at Karlsruhe (TH)\\
D-76128  Karls\-ruhe\\Germany}
\email{Lutz.Weis@kit.edu}

\begin{abstract}
In this paper we prove maximal regularity estimates in ``square function spaces''
which are commonly used in harmonic analysis, spectral theory, and stochastic analysis.
In particular, they lead to a new class of maximal regularity results
for both deterministic and stochastic equations
in $L^p$-spaces with $1<p<\infty$. For stochastic equations, the case $1<p<2$ was not
covered in the literature so far.
Moreover, the ``square function spaces'' allow initial values with the same roughness as in the $L^2$-setting.
\end{abstract}

\keywords{Maximal regularity, evolution equations, stochastic convolution,
$\g$-boundedness, $\g$-boundedness, $H^\infty$-functional calculus, $\g$-spaces}
\date\today

\thanks{The first named author is supported by VICI subsidy 639.033.604
of the Netherlands Organisation for Scientific Research (NWO). The second author
is supported by VENI subsidy 639.031.930
of the Netherlands Organisation for Scientific Research (NWO).
The third named author is supported by a grant from the
Deutsche Forschungsgemeinschaft (We 2847/1-2).}

\maketitle

\section{Introduction}

The notion of maximal $L^p$-regularity plays a key role in the functional analytic approach to
nonlinear evolution equations. A sectorial operator $A$
is said to have {\em maximal $L^p$-regularity} if for all $f\in C_{\rm c}(\R_+;\Dom(A))$
the mild solution $u$ of the inhomogeneous
Cauchy problem
\begin{equation}\label{eq:inhom}
\left\{
\begin{aligned}  u'(t) + Au(t)&  = f(t), \quad t\ge 0, \\ u(0)& =0,
\end{aligned}
\right.\end{equation}
satisfies
\begin{equation}\label{eq:maxreg}
 \n Au \n_{L^p(\R_+;X)} \le C \n f\n_{L^p(\R_+;X)}
\end{equation}
with a finite constant $C$ independent of $f$.
In the presence of maximal $L^p$-regularity,
a variety of techniques are available to solve `complicated' (e.g., quasilinear or time-dependent) nonlinear problems
by reducing them to an `easy' (semilinear) problem. This was shown in the classical papers
\cite{CleLi, Prussalshierboven} which spurred a large body of work,
systematic expositions of which are now available in the monographs \cite{Am, DHP, KuWe}.
The related notion of H\"older maximal regularity is discussed in \cite{Lun}.

In the Hilbert space context, the notion of maximal $L^p$-regularity
goes back to de Simon \cite{deSimon64} and Sobolevskii \cite{Sobol64}, who proved that generators of bounded analytic
$C_0$-semigroups on Hilbert spaces have maximal $L^p$-regularity for $p\in (1, \infty)$.
In Banach space setting, maximal regularity $L^p$-regularity in the real interpolation scale
was considered in the work of Da Prato and Grisvard \cite{DPG}. It was shown by Dore \cite{Dore}
that if a sectorial operator $A$ has maximal $L^p$-regularity for some $1<p<\infty$, then it has
maximal $L^p$-regularity for all $1<p<\infty$ and the semigroup generated by $-A$ is bounded and analytic.
The question whether, conversely, every negative generator of a bounded analytic semigroup
on a Banach space $X$ has maximal $L^p$-regularity became known as the `maximal regularity problem'.
After a number of partial affirmative results by various authors, this problem was finally solved in the negative by
Kalton and Lancien \cite{KaLa}. Around the same time, the third named author showed that a sectorial operator $A$
on a UMD Banach space $X$ has maximal $L^p$-regularity if and only if it is {\em $R$-sectorial} of angle $\sigma\in (0,\pi/2)$, which by definition means that for all $\sigma'\in (0,\pi/2)$ the operator family
\begin{equation}\label{eq:Rsect}
\{\l(\lambda+A)^{-1}: \ \ \lambda\in \C\setminus\{0\}, \  |\arg(z)|> \sigma'\}
\end{equation}
is $R$-bounded \cite{We}.

The aim of this paper is to introduce a `Gaussian' counterpart of maximal $L^p$-regularity,
called {\em maximal $\g$-regularity},
and prove that on any Banach space
a sectorial operator $A$ has maximal $\g$-regularity if and only it is $\g$-sectorial.
As an immediate corollary
we see that in UMD Banach spaces, the notions of maximal $L^p$-regularity
and maximal $\g$-regularity are equivalent. Thus our results make it possible to apply maximal
regularity techniques {\em beyond the
UMD setting.}

In the special case $X = L^q(\mu)$, the norm we consider for maximal $\g$-regularity is equivalent to
the classical square function norm
\begin{equation}\label{eq:sqfc}\|f\|_{L^q(\mu;L^2(\R_+))} =
\Big(\int \Big(\int_{\R_+} |f(t,\xi)|^2 \, dt\Big)^{q/2} \, d\mu(\xi)\Big)^{1/q}.\end{equation}
Such square function norms occur frequently in various areas of analysis, notably in
stochastic analysis (Burkholder's inequalities),
spectral theory ($H^\infty$-functional calculus),
and harmonic analysis (Littlewood-Paley theory).

In the case of a general Banach space $X$, we consider the completion $\g(\R_+;X)$
of the $X$-valued step functions with respect to the norm
\begin{equation}\label{eq:gRX}
 \Big\|\sum_{i=1}^n \frac{\one_{(t_i, t_{i+1})}}{(t_{i+1} - t_i)^{1/2}} x_i\Big\|_{\g(\R_+; X)} :=
\Big\|\sum_{i=1}^n \g_i  x_i\Big\|_{L^2(\O;X)},
\end{equation}where $(\gamma_i)_{i=1}^n$ are standard independent Gaussian random variables
on some probability space $(\Omega,\P)$ (the details are presented in
Section \ref{sec:detgamma}). For $X = L^q(\mu)$, the equivalence of
norms
$$ \n f\n_{\gamma(\R_+;L^q(\mu))} \eqsim \n f\n_{L^q(\mu;L^2(\R_+))}$$
is an easy consequence
of Khintchine's inequality.

The norms introduced in \eqref{eq:gRX} were studied from a function space point of view in \cite{KaWe}.
By the extension procedure of \cite{KaWe}, any bounded operator $T$ on $L^2(\R_+)$
extends canonically to a bounded operator $\wt T$ on $ \g(\R_+;X)$. This makes them
custom made to extend
the classical square function estimates from $H^\infty$-functional calculus and
Littlewood-Paley theory to the Banach space-valued setting. In stochastic analysis,
$\g$-norms have been instrumental in extending the It\^o isometry and Burkholder's inequalities to
the UMD space-valued setting (see \cite{NVW1} and the follow-up work on that paper).

We shall say that a sectorial operator $A$ has maximal $\g$-regularity if for all $f\in C_{\rm c}^\infty((0,\infty);\Dom(A))$
the mild solution $u$ of the inhomogeneous problem \eqref{eq:inhom}
satisfies
\begin{equation}\label{eq:maxreg2}
 \n Au \n_{\g(\R_+;X)} \le C \n f\n_{\g(\R_+;X)}
\end{equation}
with a finite constant $C$ independent of $f$.
An important difference with the theory of maximal $L^p$-regularity consists
in the identification of the trace space. Whereas maximal $L^p$-regularity allows for the treatment of nonlinear
problems with initial values in the space real interpolation space $(X, \Dom(A))_{1-\frac1p, p}$, in the presence
of maximal $\g$-regularity initial values in the complex interpolation space $[X,\Dom(A)]_\frac12$ can be allowed.
A more refined comparison between the two theories will be presented in the final section of this paper.

The stochastic counterpart of maximal $L^p$-regularity has been introduced recently in our paper \cite{NVW10},
where it was shown that if $A$ admits a bounded $H^\infty$-calculus of angle less than $\pi/2$ on a space
$L^q(\mu)$ with $2\le q<\infty$, then $A$ has stochastic maximal $L^p$-regularity for all $2<p<\infty$
(with $p=2$ included if $q=2$). Applications of stochastic maximal $L^p$-regularity to nonlinear
stochastic evolution equations have subsequently been worked out in the paper \cite{NVW11eq}.
For second order uniformly elliptic operators on $L^q(\R^d)$, the basic stochastic maximal $L^p$-regularity
estimate had been obtained earlier by Krylov \cite{Kry94a, Kry, Kry06}, who pointed
out that the restriction to exponents $2\le p<\infty$ is necessary even for $A = -\Delta$.

Here, we shall prove that  if $A$ admits a bounded $H^\infty$-calculus of angle less than $\pi/2$ on a
UMD space $X$ with Pisier's property $(\a)$, then $A$ has stochastic maximal $\g$-regularity.
The class of Banach spaces with the properties just mentioned includes the reflexive scale of
the classical function spaces $L^q(\mu)$, Sobolev spaces, Besov spaces and Hardy spaces.
In particular, we obtain the first stochastic maximal regularity result in $L^q(\mu)$-spaces with $1<q<2$ (see
Corollary \ref{cor:maximregL^2}). As in the deterministic case, a larger trace space is obtained: here, instead of
initial values in $(X, \Dom(A))_{\frac12-\frac1p, p}$ as in \cite{NVW10} we can allow arbitrary initial values in $X$.
Once again, for a more refined
comparison we refer to the final section of this paper.

In the presence of type and cotype, various embeddings of $\g$-spaces to and from suitable
interpolation scales are known to hold.
In applications to nonlinear (stochastic) evolution equations this enables us to work out the
precise (maximal) fractional regularity exponents
of the solution spaces. This is achieved in Sections \ref{sec:see}. To illustrate the usefulness of our techniques,
an application to time-dependent problems is presented in Section \ref{sec:see-time}. The results are applied to a
class of second order uniformly elliptic stochastic PDE in Section \ref{sec:appl}.

This paper continues a line of research initiated in \cite{NVW11eq, NVW10}, the notations of which we follow.
For reasons of self-containedness, an overview of the relevant definitions and preliminary results
is given in the next section.
Unless stated otherwise, all linear spaces are real. Occasionally, when we
use spectral arguments, we pass to complexifications without further notice.
By convention, $\R_+:= [0,\infty)$ denotes the closed positive half-line.
For instance, when we say that a function $u$ on $\R_+$ is locally integrable
we mean that it is integrable on every interval $[0,T]$. We shall write $a\lesssim_{p_1,\dots,p_n} b$
if $a\le Cb$ holds with a constant $C$ depending only on $p_1,\dots,p_n$. We write
$a\eqsim_{p_1,\dots,p_n} b$ when both $a\lesssim_{p_1,\dots,p_n} b$ and $a\gtrsim_{p_1,\dots,p_n} b$ hold.
The domain and range of a linear (possibly unbounded) operator $A$ are denoted by $\Dom(A)$ and $\Ran(A)$, respectively.
\section{Preliminaries}

\subsection{$\g$-Boundedness\label{subsec:Rbdd}}

Let $X$ and $Y$ be Banach spaces and let $(\g_n)_{n\ge 1}$ be Gaussian
sequence (i.e., a sequence of independent real-valued standard Gaussian
random variables).
A family $\mathscr{T}$ of bounded linear operators from $X$ to $Y$
is called {\em $\g$-bounded} if there exists a constant $C\ge 0$ such
that for all finite sequences $(x_n)_{n=1}^N$ in $X$ and
$(T_n)_{n=1}^N$ in ${\mathscr {T}}$ we have
\[ \E \Big\n \sum_{n=1}^N \g_n T_n x_n\Big\n^2
\le C^2\E \Big\n \sum_{n=1}^N \g_n x_n\Big\n^2.
\]
The least admissible constant $C$ is called the {\em $\g$-bound} of
$\mathscr {T}$, notation $\g(\mathscr{T})$.
Clearly, every $\g$-bounded family of
bounded linear operators from $X$ to $Y$ is uniformly bounded and
$\sup_{t\in
\mathscr{T}} \|T\| \le \g(\mathscr{T}) $.
If $X$ and $Y$ are Hilbert spaces, then the converse holds as well
and we have $\sup_{t\in
\mathscr{T}} \|T\| = \g(\mathscr{T})$.

Upon replacing the Gaussian sequence by a Rademacher sequence $(r_n)_{n\geq 1}$
we arrive at the related notion of a {\em $R$-bounded} family of operators.
The {\em $R$-bound} of such a family $\mathscr{T}$ will be denoted by
$R(\mathscr{T})$. A
standard randomization argument shows that every $R$-bounded family
$\mathscr{T}$
is $\g$-bounded and $\g(\mathscr{T})\le R(\mathscr{T})$. Both notions are
equivalent if $Y$ has finite cotype (see \cite[Chapter 11]{DJT}).
We refer to \cite{CPSW,DHP,KuWe} for a detailed discussion.



%

\subsection{The spaces $\g(H,X)$}
Let $H$ be a Hilbert space and $X$ a Banach space.
Let $H\otimes X$ denote the space of finite rank operators from $H$ to $X$.
Each $T\in H\otimes X$ can be represented in the form
$$ T = \sum_{n=1}^N h_n\otimes x_n$$
with $N\ge 1$, $(h_n)_{n=1}^N$ orthonormal in $H$, and $(x_n)_{n=1}^N$ a sequence
in $X$. Here, $h\otimes x$ denotes the operator $h'\mapsto[h',h]_H x$.
We define $\g(H,X)$ as the completion of $H\otimes X$ with respect to the norm
$$ \Big\n \sum_{n=1}^N h_n\otimes x_n\Big\n_{\g(H,X)}^2 := \E \Big\n \sum_{n=1}^N \g_n\otimes x_n\Big\n^2.$$
This norm does not depend on the representation of the operator as long as the sequence
$(h_n)_{n=1}^N$ is chosen to be orthonormal in $H$.
The identity mapping $h\otimes x\mapsto h\otimes x$ extends to a contractive embedding
of $\g(H,X)$ into $\calL(H,X)$. This allows us to view elements of $\g(H,X)$ as
bounded linear operators from $H$ to $X$; the operators arising in this way are called
{\em $\gamma$-radonifying.}

A survey of the theory of $\g$-radonifying operators is presented in \cite{NeeCMA}.

\begin{proposition}[Ideal property]\label{prop:ideal}
 Let $H_1,H_2$ be Hilbert spaces and $X_1,X_2$ Banach spaces.
For all $R\in \calL(H_1, H_2)$, $S\in \g(H_2,X_2)$, and $T\in \calL(X_2,X_1)$ one has
$TSR\in \g(H_1,X_1)$ and
\[\|TSR\|_{\g(H_1,X_1)}\leq \|T\|_{\calL(X_2,X_1)} \, \|S\|_{\g(H_2,X_2)} \, \|R\|_{\calL(H_1,H_2)}.\]
\end{proposition}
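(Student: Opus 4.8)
The plan is to reduce everything to finite rank operators and prove the two one-sided ideal properties separately, namely the \emph{left} estimate $\n TS\n_{\g(H_2,X_1)}\le \n T\n_{\calL(X_2,X_1)}\,\n S\n_{\g(H_2,X_2)}$ and the \emph{right} estimate $\n SR\n_{\g(H_1,X_2)}\le \n S\n_{\g(H_2,X_2)}\,\n R\n_{\calL(H_1,H_2)}$, and then to chain them as $\n TSR\n\le \n T\n\,\n SR\n\le \n T\n\,\n S\n\,\n R\n$ and finally pass to the completion by density. Throughout I use that for a finite rank $S=\sum_{n=1}^N h_n\otimes x_n$ with $(h_n)_{n=1}^N$ orthonormal one has $\n S\n_{\g(H_2,X_2)}^2=\E\n\sum_{n=1}^N\g_n x_n\n^2$, and that this quantity is independent of the orthonormal representation.

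The left estimate is immediate: writing $S=\sum_n h_n\otimes x_n$ with $(h_n)$ orthonormal gives $TS=\sum_n h_n\otimes Tx_n$, whence $\n TS\n_\g^2=\E\n\sum_n\g_n Tx_n\n^2=\E\n T\sum_n\g_n x_n\n^2\le\n T\n^2\,\E\n\sum_n\g_n x_n\n^2=\n T\n^2\n S\n_\g^2$.

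The right estimate is the heart of the matter. With $S=\sum_{n=1}^N h_n\otimes x_n$ and $(h_n)$ orthonormal in $H_2$, one computes $SR=\sum_{n=1}^N (R\s h_n)\otimes x_n$, an operator whose effective domain is the finite-dimensional space $V=\mathrm{span}(R\s h_1,\dots,R\s h_N)\subseteq H_1$. Choosing an orthonormal basis $(e_k)_{k=1}^M$ of $V$ and setting $r_{nk}:=[Re_k,h_n]_{H_2}$, so that $SRe_k=\sum_n r_{nk}x_n$ while $SRe=0$ for $e\perp V$, the representation formula yields $\n SR\n_\g^2=\E\n\sum_k\g_k SRe_k\n^2=\E\n\sum_n(\sum_k r_{nk}\g_k)x_n\n^2$. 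The matrix $(r_{nk})$ represents the restriction of $R$ and therefore has operator norm at most $\n R\n$; writing $B:=(r_{nk})/\n R\n$ (a contraction) and $\g_n'':=\sum_k B_{nk}\g_k$, we get $\n SR\n_\g^2=\n R\n^2\,\E\n\sum_n\g_n''x_n\n^2$. It then remains to invoke the Gaussian contraction principle: for any contraction $B$ on Euclidean space and any $x_n\in X$ one has $\E\n\sum_n(\sum_k B_{nk}\g_k)x_n\n^2\le\E\n\sum_n\g_n x_n\n^2$, which gives $\n SR\n_\g^2\le\n R\n^2\,\E\n\sum_n\g_n x_n\n^2=\n R\n^2\n S\n_\g^2$.

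The contraction principle itself I would prove by a singular value decomposition $B=U\Sigma V$ with $U,V$ orthogonal and $0\le\Sigma\le I$ diagonal: rotation invariance of Gaussian vectors shows that composing the coefficient system with an orthogonal matrix leaves the Gaussian average unchanged, while the scalar contraction principle handles the diagonal factor $\Sigma$ with entries in $[0,1]$. This Gaussian comparison is the only genuinely nontrivial point; the left estimate and the density step are routine. For the latter, given $S\in\g(H_2,X_2)$ choose finite rank $S_j\to S$ in $\g(H_2,X_2)$; the combined estimate gives $\n TS_jR-TS_iR\n_\g\le\n T\n\,\n R\n\,\n S_j-S_i\n_\g$, so $(TS_jR)_j$ is Cauchy in $\g(H_1,X_1)$, and its limit is identified with $TSR$ via the contractive embedding $\g(H_1,X_1)\embed\calL(H_1,X_1)$. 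Passing to the limit in $\n TS_jR\n_\g\le\n T\n\,\n S_j\n_\g\,\n R\n$ completes the proof.
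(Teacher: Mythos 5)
Your proof is correct, but note that the paper does not prove Proposition \ref{prop:ideal} at all: it is quoted as a standard fact from the theory of $\g$-radonifying operators (the survey \cite{NeeCMA} is the implicit reference), so the comparison here is with the standard argument rather than with anything in the text. Your argument is essentially that standard argument, executed correctly: the left estimate follows from $T(h\otimes x)=h\otimes Tx$ and the pointwise bound $\|T\sum_n\g_nx_n\|\le\|T\|\|\sum_n\g_nx_n\|$; the right estimate follows by re-expanding $SR=\sum_n(R^*h_n)\otimes x_n$ over an orthonormal basis $(e_k)$ of $\mathrm{span}(R^*h_1,\dots,R^*h_N)$, which is legitimate because the $\g$-norm is independent of the representation as long as the left factors are orthonormal; the matrix $(r_{nk})=([Re_k,h_n])$ is that of $P R|_V$ in orthonormal bases, hence contractive after division by $\|R\|$; and the density step correctly identifies the $\g$-limit of $TS_jR$ with the operator $TSR$ through the contractive embedding $\g(H_1,X_1)\embed\calL(H_1,X_1)$ --- this identification is the step most often skipped, and you did not skip it.

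The only place needing more care than your sketch provides is inside the Gaussian comparison lemma. In the SVD $B=U\Sigma V^T$, rotation invariance applies verbatim only to the factor multiplying the standard Gaussian vector: $V^T\g$ has the same law as $\g$, so $V$ can be dropped. The left factor $U$ cannot be dropped the same way, since $\Sigma V^T\g$ is no longer standard; the statement that composing with an orthogonal matrix leaves the average unchanged is false for a general (non-standard) Gaussian coefficient vector. Instead one transposes $U$ onto the vectors, $\sum_n(U\eta)_nx_n=\sum_j\eta_ju_j$ with $u_j=\sum_nU_{nj}x_n$, applies the scalar Gaussian contraction principle to $\eta=\Sigma\g$ (e.g.\ via conditional Jensen: $\sigma_j\g_j=\E\big[\sigma_j\g_j+\sqrt{1-\sigma_j^2}\,\g_j'\,\big|\,\g\big]$ with $\g'$ an independent copy), and then uses rotation invariance a second time to pass from $\E\|\sum_j\g_ju_j\|^2=\E\|\sum_n(U\g)_nx_n\|^2$ back to $\E\|\sum_n\g_nx_n\|^2$. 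Alternatively one can avoid the SVD altogether: $\E\|\sum_n\eta_nx_n\|^2$ depends only on the covariance of $\eta$, and $\|B\|\le1$ gives $BB^T\le I$, so covariance domination (again by Jensen, writing a standard Gaussian as $\eta$ plus an independent Gaussian with covariance $I-BB^T$) finishes the proof. Either completion turns your outline into a full proof; this is a tightening of the step you yourself flagged as the crux, not a gap.
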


In the special case when $H = L^2(E,\nu)$, where $(E,\nu)$ is a $\sigma$-finite measure
space, we shall write
\begin{align*}
 \g(E,\nu;X) & = \g(L^2(E,\nu),X),
\\
 \g(E,\nu;H,X) & = \g(L^2(E,\nu;H),X),
\end{align*}
or even $\g(E,X)$ and $\g(E;H,X)$ when the measure $\nu$ is understood. Obviously,
$\g(E;X) = \g(E;\R,X)$. Any simple function
$f: E\to H\otimes X$ induces an element of $L^2(E;H)\otimes X$ in a canonical way,
and under this identification, $\g(E;X)$ and $\g(E;H,X)$ may be viewed as a Gaussian completion
of the $X$-valued, respectively $H\otimes X$-valued, simple functions on $E$. In general, however, not every element
in $\g(E;X)$ or $\g(E;H,X)$ can be represented as an $X$-valued or $\calL(H,X)$-valued function.
Note however, that
for all $T\in \g(E;H,X)$,
$$ \lb T,x\s\rb := T\s x\s$$
can be identified with an element of $L^2(E;H)$ via the Riesz representation theorem.
Moreover,
\begin{equation}\label{eq:L2est}
\|\lb T,x^*\rb\|_{L^2(E;H)} \leq \|T\|_{\g(E;H,X)} \|x^*\|.
\end{equation}

Let $L^1_{\rm fin}(E;X)$ denote the linear space of strongly measurable functions from $E$ into $X$
which are Bochner integrable
on every set of finite measure. A function $f\in L^1_{\rm fin}(E;X)$
{\em defines an element of $\g(E;X)$}, or simply {\em belongs to $\g(E;X)$}, if the
linear operator
$$ T_f: \one_F  \mapsto \int_F f\,d\nu, \quad F\subseteq E, \ \nu(F)<\infty,$$
extends to a bounded linear operator from $L^2(E)$ into $X$ which belongs to $\g(E;X)$.
In this situation we shall simply write $$f\in \g(E;X).$$ Motivated by the above,
for any $T\in \g(E;X)$ and any measurable subset $F\subseteq E$ with $\nu(F)<\infty$
we may define
\begin{equation}\label{eq:def-int}
\int_F T\,d\nu := T(\one_F).
\end{equation}
Likewise, for $T\in \g(E;X)$ we may define
$\one_F T\in \g(E;X)$ by
$$ \one_F T (g) := T(\one_F g), \quad g\in L^2(E),$$
and we have, identifying $L^2(F)$ with a closed subspace of $L^2(E)$ in the natural way,
\begin{equation}\label{eq:restriction}
 \|\one_F T\|_{\g(E;X)} = \|T|_{L^2(F)}\|_{\g(F;X)}.
\end{equation}
Finally, we note that in the case $T$ is represented by a strongly measurable function $f:E\to X$, then
\[T g = \int_E f g \, d\nu, \ \ \ g\in L^2(E),\]
where the integral exists as a Pettis integral (see \cite{DU}).

With these notation we have the following immediate consequence of \cite[Proposition 2.4]{NVW1}:

\begin{proposition}\label{prop:conv} Let
$(F_n)_{n\geq 1}$ be a sequence of measurable subsets in $E$ such that
$\lim_{n\to
\infty}\nu(E\setminus F_n)=0$. Then for all $T\in \g(E;X)$ we have $\lim_{n\to \infty}
\one_{F_n}T = T$ in $\g(E;X)$.
\end{proposition}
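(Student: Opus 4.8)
The plan is to reduce the statement to finite rank operators and then invoke the absolute continuity of the integral. First I would note that multiplication by $\one_{F_n}$ is the orthogonal projection $P_n$ of $L^2(E)$ onto its closed subspace $L^2(F_n)$, so that $\one_{F_n}T = T\circ P_n$ by definition, with $\|P_n\|_{\calL(L^2(E))}\le 1$. The ideal property (Proposition \ref{prop:ideal}) then shows that the maps $T\mapsto \one_{F_n}T = TP_n$ are contractions on $\g(E;X)$, uniformly in $n$. Since the finite rank operators $L^2(E)\otimes X$ are dense in $\g(E;X)$ by definition, this uniform bound permits a routine $3\e$-argument: given $\e>0$ and $T\in\g(E;X)$, pick a finite rank $S$ with $\|T-S\|_{\g(E;X)}<\e$ and estimate
\[ \|T-\one_{F_n}T\|_{\g(E;X)} \le \|T-S\|_{\g(E;X)} + \|S-\one_{F_n}S\|_{\g(E;X)} + \|\one_{F_n}(S-T)\|_{\g(E;X)}, \]
where the first term is $<\e$ and the last term is at most $\|S-T\|_{\g(E;X)}<\e$ by contractivity. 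Thus it suffices to prove that $\one_{F_n}S\to S$ for finite rank $S$.

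Next, for $T=\sum_{k=1}^m h_k\otimes x_k$ with $h_k\in L^2(E)$ and $x_k\in X$, a direct computation gives $\one_{F_n}T(g)=T(\one_{F_n}g)=\sum_k [g,\one_{F_n}h_k]_{L^2(E)}\,x_k$, that is, $\one_{F_n}T=\sum_{k=1}^m (\one_{F_n}h_k)\otimes x_k$, and hence
\[ T-\one_{F_n}T = \sum_{k=1}^m (\one_{E\setminus F_n}h_k)\otimes x_k. \]
Using the elementary identity $\|g\otimes x\|_{\g(E;X)}=\|g\|_{L^2(E)}\|x\|_X$ together with the triangle inequality, I obtain
\[ \|T-\one_{F_n}T\|_{\g(E;X)} \le \sum_{k=1}^m \|\one_{E\setminus F_n}h_k\|_{L^2(E)}\,\|x_k\|_X. \]
For each fixed $k$ the set function $A\mapsto \int_A |h_k|^2\,d\nu$ is a finite measure absolutely continuous with respect to $\nu$, so the hypothesis $\nu(E\setminus F_n)\to 0$ forces $\|\one_{E\setminus F_n}h_k\|_{L^2(E)}^2=\int_{E\setminus F_n}|h_k|^2\,d\nu\to 0$. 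As the sum over $k$ is finite, the right-hand side tends to $0$, which proves the claim for finite rank $T$ and, via the reduction above, for all $T\in\g(E;X)$.

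The point at which care is genuinely needed is the reduction step. The $\g$-norm is \emph{not} continuous with respect to strong operator convergence, so one cannot simply appeal to the (true) fact that $P_n\to I$ strongly on $L^2(E)$. The uniform contractivity furnished by the ideal property is exactly what legitimizes the passage from the dense subspace of finite rank operators to all of $\g(E;X)$, and it is the only nontrivial ingredient beyond the elementary finite rank computation.
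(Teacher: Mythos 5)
Your proof is correct. For comparison: the paper does not prove this proposition directly; it records it as an immediate consequence of \cite[Proposition 2.4]{NVW1}, which states that if $(U_n)_{n\ge 1}$ in $\calL(H)$ is uniformly bounded with $U_n^*\to U^*$ strongly, then $TU_n\to TU$ in $\g(H,X)$ for every $T\in\g(H,X)$. In the present situation one takes $U_n=P_n$, the self-adjoint multiplication by $\one_{F_n}$; that $P_n\to I$ strongly follows from the same absolute continuity of the integral that you invoke. Your argument---uniform contractivity of $T\mapsto TP_n$ via the ideal property, density of $L^2(E)\otimes X$, and the rank-one computation---is exactly the standard proof of that cited lemma specialized to these projections, so in substance the two routes coincide; yours simply makes the paper's one-line citation self-contained. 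The only point to adjust is your closing remark: it is too strong to say that one cannot appeal to the strong convergence $P_n\to I$. For \emph{sequences}, strong convergence of the adjoints together with a uniform bound (automatic here by contractivity, and in general by the uniform boundedness principle) does imply convergence of the compositions in the $\g$-norm---this is precisely the content of the result the paper cites. What genuinely fails is continuity of $U\mapsto TU$ for nets in the strong operator topology, or any argument that dispenses with the uniform bound; your proof correctly supplies that bound and the density step, which is exactly what makes the sequential statement true.
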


The following $\g$-multiplier result, essentially due to \cite{KaWe} (also see \cite[Section 5]{NeeCMA}),
plays a crucial role. Since, its present formulation, the formulation is slightly different,
we show how it can be deduced from the version in \cite{NeeCMA}. As before, $(E,\nu)$ is a
$\sigma$-finite measure space.

\begin{proposition}\label{prop:KW}
Let $X$ and $Y$ be Banach spaces. Let $X_0\subseteq X$ be a dense set.
Let $M:E\to \calL(X,Y)$ be a function with the following
properties:
\begin{enumerate}
\item[\rm(i)] the range $\mathscr{M} := \{M(t): \ t\in E\}$ is $\g$-bounded;
\item[\rm(ii)] for all $x\in X_0$ the function $Mx$ belongs to
$\g(E;Y)$.
\end{enumerate}
Then for all $G\in \g(E;H,X)$ we have
$MG\in \g(E;H,Y)$ and
\begin{equation}\label{eq:gammamultiplier}
\|MG\|_{\g(E;H,Y)}\le \g(\mathscr{M})\|G\|_{\g(E;H,X)}.
\end{equation}
\end{proposition}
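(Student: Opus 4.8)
The plan is to verify that the hypotheses of the $\g$-multiplier theorem of \cite[Section~5]{NeeCMA} are satisfied under the present, slightly different assumptions, and then to quote that result. The reference theorem has exactly the conclusion \eqref{eq:gammamultiplier}, but its measurability requirement on $M$ is imposed for all of $X$, whereas here the $\g$-membership in (ii) is only assumed on the dense subset $X_0$. The only real work is therefore to propagate the measurability contained in (ii) from $X_0$ to the whole space. As a preliminary observation I would record that (i) already yields the uniform bound $\sup_{t\in E}\n M(t)\n_{\calL(X,Y)}\le\g(\mathscr{M})<\infty$, since, as recalled above, a $\g$-bounded family is uniformly bounded by its $\g$-bound.

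First I would extract strong measurability from (ii). For $x\in X_0$ the hypothesis $Mx\in\g(E;Y)$ presupposes, by definition, that $t\mapsto M(t)x$ lies in $L^1_{\rm fin}(E;Y)$, and in particular that it is strongly $\nu$-measurable. Now fix an arbitrary $x\in X$ and pick $x_n\in X_0$ with $x_n\to x$ in $X$. Since each $M(t)$ is bounded, $M(t)x_n\to M(t)x$ in $Y$ for every $t\in E$, so $t\mapsto M(t)x$ is an everywhere pointwise limit of strongly measurable functions and is therefore itself strongly measurable. (Should the formulation in \cite{NeeCMA} instead be phrased through the adjoints $M^*y^*$, one runs the same approximation scalarly: for each $y^*\in Y^*$ the functions $t\mapsto\lb x_n,M(t)^*y^*\rb$ are measurable, and the uniform operator bound above legitimises passing to the limit, giving measurability of $t\mapsto\lb x,M(t)^*y^*\rb$ for every $x\in X$.) Either way, the measurability demanded by the reference result now holds on all of $X$.

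With this measurability established for every $x\in X$ and the $\g$-boundedness (i) in hand, the multiplier theorem of \cite[Section~5]{NeeCMA} applies directly: for each $G\in\g(E;H,X)$ it yields $MG\in\g(E;H,Y)$ together with $\n MG\n_{\g(E;H,Y)}\le\g(\mathscr{M})\,\n G\n_{\g(E;H,X)}$, which is precisely \eqref{eq:gammamultiplier}. The substantive point — and the only place where both hypotheses are genuinely used — is the propagation step: density of $X_0$ supplies the approximation, while the uniform operator bound furnished by $\g$-boundedness is what permits passage to the pointwise limit. I expect this matching of measurability hypotheses, rather than any quantitative estimate, to be the crux, the estimate itself being supplied wholesale by \cite{NeeCMA}.
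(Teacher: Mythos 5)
There is a genuine gap, and it lies in your reading of what hypothesis (ii) is for. You treat (ii) as a strong-measurability assumption on the dense set $X_0$ and take the ``only real work'' to be propagating measurability to all of $X$. But the $\g$-multiplier theorem of \cite{NeeCMA} (equivalently, of Kalton--Weis), applied under the $\g$-boundedness hypothesis (i) together with strong measurability of $t\mapsto M(t)x$, does \emph{not} conclude that $MG$ lies in $\g(E;H,Y)$: its conclusion is that the multiplication map is bounded from $\g(E;H,X)$ into $\gamma_\infty(E;H,Y)$, the larger space of $\g$-\emph{summing} operators, with norm at most $\g(\mathscr{M})$. In general a $\g$-summing operator need not be $\g$-radonifying (the two classes coincide, e.g., when $Y$ does not contain a copy of $c_0$, by Hoffmann-J{\o}rgensen--Kwapie\'n --- which is exactly why the paper remarks, right after the proposition, that (ii) is automatic in that case). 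So after your measurability step you have only established $MG\in\gamma_\infty(E;H,Y)$ together with the estimate; the claimed membership $MG\in\g(E;H,Y)$ does not follow, and this membership is precisely the point of the proposition's ``slightly different'' formulation.

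The paper's proof uses (ii) in an entirely different way: first, (i) alone gives, via \cite{NeeCMA}, the bounded map $G\mapsto MG$ from $\g(E;H,X)$ into $\gamma_\infty(E;H,Y)$ satisfying \eqref{eq:gammamultiplier}; second, for $G$ a linear combination of elementary elements $(\one_F\otimes h)\otimes x_0$ with $x_0\in X_0$, the image $MG$ corresponds to the function $t\mapsto \one_F(t)\, h\otimes M(t)x_0$, which belongs to $\g(E;H,Y)$ \emph{because} $Mx_0\in\g(E;Y)$ by (ii); finally, since such $G$ are dense in $\g(E;H,X)$ and $\g(E;H,Y)$ is a closed subspace of $\gamma_\infty(E;H,Y)$, boundedness of the map forces $MG\in\g(E;H,Y)$ for every $G$. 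Your density argument is aimed at the wrong target: density of $X_0$ must be exploited at the level of elementary elements of $\g(E;H,X)$ to transfer \emph{radonification} (membership in $\g$ rather than $\gamma_\infty$) from the special $G$'s to all $G$'s, not to transfer measurability of $Mx$ from $X_0$ to $X$. The pointwise-limit measurability argument you give is correct but does not repair this, since even full measurability on $X$ only feeds back into the $\gamma_\infty$-valued conclusion.
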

\begin{proof}(Sketch) The $\g$-multiplier result presented in \cite{NeeCMA}
 shows that condition (i) implies that $MG$ is well defined as an element of $ \gamma_\infty(E;H,Y)$, the Banach
space of all $\g$-summing operators from $L^2(E;H)$ to $Y$, and that the estimate \eqref{eq:gammamultiplier}
holds. For elements $G \in \g(E;H,X)$
which are linear combinations of elements of the form $(\one_F \otimes h)\otimes x_0$
with $x_0\in X_0$, condition (ii) guarantees that $MG$ does actually belongs to
$\g(E;H,Y)$. Since such $G$ are dense in $\g(E;H,X)$, the general case
follows by approximation.
\end{proof}

By a theorem of Hoffmann-J{\o}rgensen and Kwapie\'n,
condition (ii) is automatically fulfilled if $Y$ does not contain a copy of
$c_0$ (see \cite[Theorem 4.3]{NeeCMA}).
If $E$ is a separable metric space and $M:E \to \calL(X,Y)$ is strongly continuous,
the $\g$-boundedness condition (i) is also necessary for the
above statement to hold (see \cite{KaWe}).

As a special case of Proposition \ref{prop:KW} we note that for all $m\in L^\infty(E)$
and $f \in \g(E;X)$ we have $mf \in \g(E;X)$ and
\begin{equation}\label{eq:gammapointwisemult}
\|m f\|_{\g(E;X)} \leq \|m\|_{L^\infty(E)} \|f\|_{\g(E;X)}.
\end{equation}

The next proposition can be found (for $H=\R$)
in \cite{KaWe}; see also \cite[Proposition 13.9]{NeeCMA}.
\begin{proposition}\label{prop:C1}
Let $H$ be a Hilbert space, $X$ a Banach space, and let $a<b$ be real numbers.
If $\phi:(a,b)\to \g(H,X)$ is continuously differentiable
and \[\int_a^b (s-a)^{\frac{1}{2}} \|\phi'(s)\|_{\g(H,X)} \, ds<\infty,\]
then $\phi\in \g(a,b;H,X)$ and
\[\|\phi\|_{\g(a,b;H,X)}\leq (b-a)^{\frac{1}{2}} \, \n\phi(b-)\n +  \int_a^b (s-a)^{\frac{1}{2}} \|\phi'(s)\|_{\g(H,X)} \, ds.\]
\end{proposition}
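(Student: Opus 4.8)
The plan is to reduce everything to the fundamental theorem of calculus. First I would observe that the weight $(s-a)^{1/2}$ is bounded below on every interval $(c,b)$ with $c>a$, so the hypothesis forces $\phi'$ to be genuinely Bochner integrable near $b$; hence the limit $\phi(b-)=\lim_{s\uparrow b}\phi(s)$ exists in $\g(H,X)$ and, for each fixed $s\in(a,b)$, the integral $\int_s^b\phi'(r)\,dr$ is well defined. The fundamental theorem of calculus then gives the pointwise decomposition
\[
\phi(s)=\phi(b-)-\int_s^b\phi'(r)\,dr=:\phi_0(s)+\phi_1(s),\qquad s\in(a,b),
\]
where $\phi_0\equiv\phi(b-)$ is constant. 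I would estimate the $\g(a,b;H,X)$-norms of $\phi_0$ and $\phi_1$ separately and combine them by the triangle inequality.

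For the constant part, I would write the induced operator on $L^2(a,b;H)$ as $\phi(b-)\circ I$, where $I\colon L^2(a,b;H)\to H$ is the averaging map $Ig=\int_a^b g(s)\,ds$. By Cauchy--Schwarz $\|I\|_{\calL(L^2(a,b;H),H)}\le(b-a)^{1/2}$, so the ideal property (Proposition \ref{prop:ideal}) yields $\phi_0\in\g(a,b;H,X)$ with $\|\phi_0\|_{\g(a,b;H,X)}\le(b-a)^{1/2}\|\phi(b-)\|_{\g(H,X)}$, which is exactly the first term of the asserted bound. The same computation on a subinterval, together with the restriction identity \eqref{eq:restriction}, gives the elementary estimate
\[
\|\one_{(a,r)}\psi\|_{\g(a,b;H,X)}=(r-a)^{1/2}\|\psi\|_{\g(H,X)},\qquad \psi\in\g(H,X),\ a<r<b,
\]
which is precisely where the weight $(s-a)^{1/2}$ enters.

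For the remainder I would rewrite $\phi_1$ as a superposition via Fubini's theorem, $\phi_1(s)=-\int_a^b\one_{(a,r)}(s)\,\phi'(r)\,dr$, and interpret the right-hand side as a Bochner integral $-\int_a^b\one_{(a,r)}\phi'(r)\,dr$ of the $\g(a,b;H,X)$-valued integrand $r\mapsto\one_{(a,r)}\phi'(r)$. Continuity of $\phi'$ and the displayed norm identity show this integrand is continuous with norm integral $\int_a^b(r-a)^{1/2}\|\phi'(r)\|_{\g(H,X)}\,dr<\infty$, so the Bochner integral exists and, by the standard norm inequality for Bochner integrals, its $\g(a,b;H,X)$-norm is at most that quantity, the second term of the bound. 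The main obstacle is the consistency step: I must check that this $\g(a,b;H,X)$-valued Bochner integral, viewed as an operator on $L^2(a,b;H)$, coincides with the operator $g\mapsto\int_a^b\phi_1(s)g(s)\,ds$ induced by the function $\phi_1$. I would verify this by testing against $x^*\in X^*$ and $g\in L^2(a,b;H)$: evaluation-at-$g$ followed by $x^*$ is a bounded functional on $\g(a,b;H,X)$, so it passes through the Bochner integral, reducing the identity to the scalar Fubini computation interchanging the order of integration over the triangle $\{a<s<r<b\}$. The interchange is legitimate because the contractive embedding $\g(H,X)\hookrightarrow\calL(H,X)$ and Cauchy--Schwarz bound the double integral of $|\langle\phi'(r)g(s),x^*\rangle|$ by $\|x^*\|\,\|g\|_{L^2(a,b;H)}\int_a^b(r-a)^{1/2}\|\phi'(r)\|_{\g(H,X)}\,dr<\infty$. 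Once the two operators are identified, $\phi_1\in\g(a,b;H,X)$, and the triangle inequality $\|\phi\|_{\g(a,b;H,X)}\le\|\phi_0\|_{\g(a,b;H,X)}+\|\phi_1\|_{\g(a,b;H,X)}$ gives the claim.
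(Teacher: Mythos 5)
Your proof is correct, and it is essentially the proof of this result in the literature: the paper itself gives no argument for Proposition \ref{prop:C1}, citing instead \cite{KaWe} and \cite[Proposition 13.9]{NeeCMA}, where the proof is exactly your decomposition $\phi(s)=\phi(b-)-\int_s^b\phi'(r)\,dr$, with the constant part handled by the ideal property and the remainder written as the $\g(a,b;H,X)$-valued Bochner integral of $r\mapsto\one_{(a,r)}\phi'(r)$, whose norm is $(r-a)^{1/2}\|\phi'(r)\|_{\g(H,X)}$. Your additional care in identifying the Bochner integral with the operator induced by the function $\phi_1$ (testing against $g\in L^2(a,b;H)$ and $x^*\in X^*$ and justifying Fubini via the embedding $\g(H,X)\hookrightarrow\calL(H,X)$ and Cauchy--Schwarz) is a legitimate and complete way to settle the one point usually left implicit.
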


For the definitions of type, cotype,
we refer to \cite{DJT, LiTz}. We recall some facts that will be used frequently:

\begin{itemize}
 \item All Banach spaces have type $1$ and cotype $\infty$;
 \item A Banach space is isomorphic to a Hilbert space if and only if it has type $2$ and cotype $2$;
 \item If $X$ has type $p$ (cotype $q$) then it has type $p'$ for all $p'\in [1,p]$ (cotype $q'$ for all $q'\in [q, \infty]$).
 \item $L^p$-spaces, with $1\le p<\infty$, have type $p\wedge 2$ and cotype $p\vee 2$.
\end{itemize}

The next example gives a useful square function characterisation for $\g(E;X)$
in the case of Banach function spaces $X$ with finite cotype.
\begin{example}
Let $(E,\nu)$ be a $\sigma$-finite measure space and let $X$ a Banach function space with
finite cotype. Then the mapping $I:X(L^2(E))\to
\gamma(E;X)$ given by  $I(x\otimes f)g  := [f,g] x$
defines an isomorphism of Banach spaces. In particular,
for all $\nu$-simple functions $\phi:E\to X$ one has
\begin{equation}\label{eq:gammaBfunc}
\|\phi\|_{\g(E;X)} \eqsim_{E,\nu} \Big\|\Big(\int_E |\phi|^2\, d\nu\Big)^{\frac{1}{2}}\Big\|_X.
\end{equation}
\end{example}

The Fourier-Plancherel transform
\[
\wh{f}(\xi) = \int_{\R^d} f(x)e^{-ix\cdot\xi}\,dx, \quad
\xi\in\R^d,\]
initially defined for functions $f = \sum_{n=1}^N g_n\otimes x_n$ in $L^2(\R^d)\otimes X$
by
\begin{align}\label{eq:Fisometry} \wh{g\otimes x} := \wh{g} \otimes x, \quad g\in L^2(\R^d), \ x\in X,
\end{align}
has a unique extension to a isomorphic isomorphism on $\g(\R^d;X)$. Indeed, identifying
a function $f\in L^2(\R^d)\otimes X$ with the corresponding finite rank
operator $T_f$ in $\g(\R^d),X)$, this is evident from the
representation
$$T_{\wh{f}} = T_f\circ \F^*,$$
where $\F: L^2(\R^d)\to L^2(\R^d)$ is the Fourier-Plancherel transform $f\mapsto \wh{f}$
and $\F^*$ is its Banach space adjoint with respect to the duality pairing
$$\lb g,h\rb_{L^2(\R^d)} = \int_{\R^d} g(x)h(x)\,dx.$$

\begin{remark} Notice that:
 \begin{enumerate}
  \item[\rm(i)] we do not normalise the Fourier-Plancherel transform so as to become an
isometry; this would have the disadvantage of introducing constants $\sqrt{(2\pi)^{d}}$
in most of the formulas below;
  \item[\rm(ii)] in the above duality pairing we
do not take complex conjugates in the second argument; only in this way does the
identity $T_{\wh{f}} = T_f\circ \F^*$ hold true.
 \end{enumerate}
\end{remark}

For $s\in \R$ and an open set $\mathcal{O}\subseteq \R^d$ we write
\[\g^s(\mathcal{O},X) := \g(H^{-s}(\mathcal{O});X),\]
where for each $\alpha\in \R$, $H^{\a}(\mathcal{O})$ denotes the usual Bessel potential space.
For $\mathcal{O} = \R^d$ we have the following characterization of
$\g^s(\R^d;X)$. We write $\mathscr{S}(\R^d)$ for the class of Schwartz functions
on $\R^d$.

\begin{proposition}\label{prop:equivgammas}
Let $X$ be a Banach space. For any $f\in \mathscr{S}(\R^d)\otimes X$ we have
equivalences of norms
\begin{align*}
\|f\|_{\g^s(\R^d;X)} & \eqsim \|(1-\Delta)^{s/2} f\|_{\g(\R^d;X)} \\ & \eqsim
\|\xi\mapsto (1+\xi^2)^{s/2} \hat{f}(\xi)\|_{\g(\R^d;X)}
\intertext{with constants depending only on $d$.
If $s\geq 0$, then we have the further equivalences}
& \eqsim \|f\|_{\g(\R^d;X)} +
\sum_{k=1}^d\|D^{s}_kf\|_{\g(\R^d;X)}
\\ &  \eqsim \|\hat{f}\|_{\g(\R^d;X)} + \sum_{k=1}^d\|(i\xi_k)^s
\hat{f}(\xi)\|_{\g(\R^d;X)}
\end{align*}
with constants depending only on $d$.
\end{proposition}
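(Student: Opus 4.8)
Throughout write $\|\cdot\|_\g$ for $\|\cdot\|_{\g(\R^d;X)}$. The plan is to reduce every quantity to a pointwise multiplier statement on $\g(\R^d;X)$ and then combine three tools: the ideal property (Proposition~\ref{prop:ideal}), the Fourier--Plancherel isomorphism of $\g(\R^d;X)$ recorded around \eqref{eq:Fisometry}, and the pointwise multiplier bound \eqref{eq:gammapointwisemult}.

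\emph{First equivalence.} First I would pin down how $f=\sum_n g_n\otimes x_n$ with $g_n\in\mathscr{S}(\R^d)$ sits inside $\g^s(\R^d;X)=\g(H^{-s}(\R^d);X)$. Consistently with the convention that a function acts on the underlying $L^2$ by integration, $f$ is realised as the finite-rank operator $T^s_f\colon h\mapsto\sum_n\lb g_n,h\rb\,x_n$ on $H^{-s}(\R^d)$, where $\lb\cdot,\cdot\rb$ is the $L^2$-duality pairing between $H^s$ and $H^{-s}$. Writing $W:=(1-\Delta)^{-s/2}$, which by the very definition of the Bessel norm is an isometric isomorphism from $H^{-s}(\R^d)$ onto $L^2(\R^d)$, and using self-adjointness of the fractional powers I would check the factorisation $\lb g_n,h\rb=\lb(1-\Delta)^{s/2}g_n,Wh\rb_{L^2(\R^d)}$. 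Since $(1-\Delta)^{s/2}$ maps $\mathscr{S}(\R^d)$ into itself, $g:=(1-\Delta)^{s/2}f$ lies in $\mathscr{S}(\R^d)\otimes X$ and the identity reads $T^s_f=T_g\circ W$, with $T_g$ the finite-rank operator on $L^2(\R^d)$ associated with $g$. As $W$ is an isometric isomorphism, Proposition~\ref{prop:ideal} applied to $T_g\circ W$ and to $T_g=T^s_f\circ W^{-1}$ gives $\|f\|_{\g^s(\R^d;X)}=\|T^s_f\|_{\g(H^{-s};X)}=\|T_g\|_{\g(L^2(\R^d);X)}=\|(1-\Delta)^{s/2}f\|_\g$, which is the first equivalence, in fact an equality.

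\emph{Second equivalence.} Here I would invoke that $\mathcal F$ extends to an isomorphism of $\g(\R^d;X)$ through $T_{\hat\phi}=T_\phi\circ\mathcal F\s$ (see the discussion around \eqref{eq:Fisometry}), so that $\|\phi\|_\g\eqsim_d\|\hat\phi\|_\g$, the failure of isometry being only the $(2\pi)^d$-normalisation. Applying this to $\phi=(1-\Delta)^{s/2}f$ and using $\widehat{(1-\Delta)^{s/2}f}(\xi)=(1+|\xi|^2)^{s/2}\hat f(\xi)$ yields $\|(1-\Delta)^{s/2}f\|_\g\eqsim_d\|(1+|\cdot|^2)^{s/2}\hat f\|_\g$. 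No property of $X$ is used, and these constants are genuinely independent of $s$.

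\emph{The case $s\ge 0$.} Now work on the Fourier side with the scalar symbols $m_0(\xi)=(1+|\xi|^2)^{s/2}$ and $m_k(\xi)=(i\xi_k)^s$, noting $|m_k(\xi)|=|\xi_k|^s$ and $m_k\hat f=\widehat{D^s_kf}$. One direction is immediate from \eqref{eq:gammapointwisemult}: since $m_0\ge 1$ and $|\xi_k|\le(1+|\xi|^2)^{1/2}$, both $m_0^{-1}$ and $m_k/m_0$ have $L^\infty$-norm at most $1$, whence $\|\hat f\|_\g\le\|m_0\hat f\|_\g$ and $\|m_k\hat f\|_\g\le\|m_0\hat f\|_\g$. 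For the converse I would exhibit $m_0$ as a combination of the constant $1$ and the $m_k$ with bounded coefficients: setting $D:=1+\sum_j|m_j|^2$, $n_0:=m_0/D$ and $n_k:=m_0\,\overline{m_k}/D$, one has the pointwise identity $n_0+\sum_k n_k m_k=m_0$, and inspecting the growth as $|\xi|\to\infty$ (where $m_0\sim|\xi|^s$ while $D\gtrsim|\xi|^{2s}$) shows $n_0,n_k\in L^\infty(\R^d)$. Hence $m_0\hat f=n_0\hat f+\sum_k n_k(m_k\hat f)$ and \eqref{eq:gammapointwisemult} gives $\|m_0\hat f\|_\g\lesssim\|\hat f\|_\g+\sum_k\|m_k\hat f\|_\g$. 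Combining both directions proves the equivalence of the third quantity with $\|\hat f\|_\g+\sum_k\|m_k\hat f\|_\g$, and transferring back through $\mathcal F$ as above turns the latter into $\|f\|_\g+\sum_k\|D^s_kf\|_\g$.

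\emph{Main obstacle.} The one genuinely delicate point is the bookkeeping of the first step: identifying the correct realisation of $\mathscr{S}(\R^d)\otimes X$ in $\g(H^{-s}(\R^d);X)$. It must be through the $H^s$--$H^{-s}$ duality, not the $H^{-s}$ inner product — the latter would produce $(1-\Delta)^{-s/2}f$ and hence the wrong power — and the whole first equivalence hinges on recognising $T^s_f=T_g\circ W$ so that it becomes a clean instance of the ideal property. The multiplier step is then routine; the only caveats are that $n_0,n_k$ are complex-valued, so \eqref{eq:gammapointwisemult} is read on the complexification, and that the constant in the direction $\|m_0\hat f\|_\g\lesssim\|\hat f\|_\g+\sum_k\|m_k\hat f\|_\g$ depends on $s$ as well as $d$, in contrast to the $s$-independent constants in the first two equivalences.
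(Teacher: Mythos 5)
Your proof is correct and takes essentially the same route as the paper's: the first equivalence via the ideal property and the factorization of $f$ through $(1-\Delta)^{\pm s/2}$ between $L^2(\R^d)$ and $H^{-s}(\R^d)$, the second via the Fourier--Plancherel isomorphism \eqref{eq:Fisometry}, and the $s\ge 0$ equivalences via bounded pointwise multipliers and \eqref{eq:gammapointwisemult} (the paper's reverse-direction multiplier is $(1+|\xi|^2)^{s/2}\bigl(1+\sum_{k=1}^d|(i\xi_k)^s|\bigr)^{-1}$ where you use the conjugate-weighted combination with $1+\sum_k|m_k|^2$, an immaterial variation). Your closing caveat that the constants in the $s\ge 0$ equivalences also depend on $s$, not only on $d$, is accurate and in fact a fair correction to the wording of the statement.
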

\begin{proof}
Let us fix an arbitrary $f\in \mathscr{S}(\R^d)\otimes X$.
To prove the first equivalence of norms,
note that since $(1-\Delta)^{s/2}:L^2(\R^d)\to
H^{-s}(\R^d)$ is bounded it follows from the right ideal property
that $g_s:= (1-\Delta)^{s/2} f$ belongs to $\g(\R^d;X)$ and
\[\|g_s\|_{\g(\R^d;X)}\lesssim \|f\|_{\g^s(\R^d;X)}.\]
The reverse estimate can be proved in the same way, now using that
$(1-\Delta)^{-s/2} : H^{-s}(\R^d)\to L^2(\R^d)$ is bounded.
The second norm equivalence follows from \eqref{eq:Fisometry} and
$\F[(1-\Delta)^{s/2} f](\xi) = (1+|\xi|^2)^{s/2} \hat{f}(\xi)$.

Suppose now that $s\ge 0$. Fix $k\in \{1, \ldots, d\}$. Note that
\[1\leq (1+|\xi|^2)^{s/2},  \ \ \text{and} \ \ |(i\xi_k)^{s}|\leq
(1+|\xi|^2)^{s/2}.\]
Since the function $m_k(\xi) = {(i\xi_k)^{s}}/{(1+|\xi|^2)^{s/2}}$
is bounded, by \eqref{eq:gammapointwisemult} we
obtain
\begin{align*}
\|\xi\mapsto (i\xi_k)^s \hat{f}(\xi)\|_{\g(\R^d;X)} & = \|\xi\mapsto m_k(\xi)
(1+|\xi|^2)^{s/2} \hat{f}(\xi)\|_{\g(\R^d;X)} \\ & \leq \|m_k\|_{L^\infty(\R^d)}
\|\xi\mapsto (1+|\xi|^2)^{s/2} \hat{f}(\xi)\|_{\g(\R^d;X)}.
\end{align*}
The reverse estimate can be proved in the same way, now using the pointwise
multiplier $$m(\xi) = (1+|\xi|^2)^{s/2} \Big(1+\sum_{k=1}^d|(i\xi_k)^{s}|\Big)^{-1}.$$
Finally, the equivalence of the last two norms follows from \eqref{eq:Fisometry} and
the identity $\F[D^s_k
f](\xi) = (i\xi_k)^s \hat{f}(\xi)$.
\end{proof}

For any $s\in\R$, $\mathscr{S}(\R^d)\otimes X$ is dense in $H^s(\R^d;X)$.
Indeed, this follows from the density of $\mathscr{S}(\R^d)$ in $H^s(\R^d)$
and the density of $H^s(\R^d)\otimes X$ in $\g(H^s(\R^d);X)$.
With this in mind, the first equivalence of norms states that the operator
$(1-\Delta)^{s/2}$ extends to an
isomorphism from $\g(\R^d;X)$
onto $\g^s(\R^d;X)$ (with inverse $(1-\Delta)^{-s/2}$).
The other equivalences can be interpreted similarly.

The next result will only be used for dimension $d=1$. We refer the reader to \cite{Tr1} for details on the Besov space $B^{s}_{p,q}(\mathcal{O};X)$.

\begin{proposition}[$\gamma$-Besov-embedding]\label{prop:gammsobolev}
Let $X$ be a Banach space, $s\in \R$, $p\in [1, 2]$ and $q\in [2, \infty]$. Let
$\mathcal{O}\subseteq \R^d$ be a smooth domain.
\begin{enumerate}
\item[\rm(i)] If $X$ has type $p$, then we have a natural continuous embedding
\[B^{s+d(\frac{1}{p} - \frac12)}_{p,p}(\mathcal{O};X)\hookrightarrow
\g^s(\mathcal{O};X).\]
\item[\rm(ii)] If $X$ has cotype $q$, then we have a natural continuous embedding
\[\g^s(\mathcal{O};X)\hookrightarrow B^{s+d(\frac{1}{q} -
\frac12)}_{q,q}(\mathcal{O};X).\]
\end{enumerate}
\end{proposition}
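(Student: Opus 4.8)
The plan is to reduce everything to the case $\OO=\R^d$ and there to combine the norm equivalence of Proposition~\ref{prop:equivgammas} with a Littlewood--Paley decomposition, the analytic core being a single \emph{band-limited} estimate that trades the $\g$-norm against an $L^p$- or $L^q$-norm by means of type, respectively cotype. For the reduction to $\R^d$ I would use that a smooth domain admits a universal extension operator, bounded on $B^{\sigma}_{p,q}(\OO;X)\to B^{\sigma}_{p,q}(\R^d;X)$ for every $\sigma$ and simultaneously bounded $H^{-s}(\OO)\to H^{-s}(\R^d)$ on the Bessel scale, compatibly with restriction to $\OO$. Since $\g^s(\OO;X)=\g(H^{-s}(\OO);X)$, precomposition with the $H^{-s}$-extension is, by the ideal property (Proposition~\ref{prop:ideal}), a bounded map $\g(H^{-s}(\R^d);X)\to\g(H^{-s}(\OO);X)$, so the $\R^d$-embeddings transfer to $\OO$ once one checks that these abstract maps implement the natural distributional identifications. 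From now on I take $\OO=\R^d$ and argue for $f\in\Schw(\R^d)\otimes X$, which is dense in both scales.

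\textbf{Littlewood--Paley set-up.} Fix a dyadic resolution $(\varphi_j)_{j\ge0}$ with $\widehat{\varphi_0}$ supported in $B(0,2)$ and $\widehat{\varphi_j}$ supported in $\{2^{j-1}\le|\xi|\le2^{j+1}\}$ for $j\ge1$, and set $f_j:=\varphi_j*f$. By Proposition~\ref{prop:equivgammas}, $\|f\|_{\g^s(\R^d;X)}\eqsim\|(1-\Delta)^{s/2}f\|_{\g(\R^d;X)}$. Since the Fourier--Plancherel transform is an isomorphism of $\g(\R^d;X)$ and, by \eqref{eq:gammapointwisemult}, pointwise multiplication by a bounded symbol is bounded there, every frequency projection and, in particular, the projections onto even- and odd-indexed blocks are uniformly bounded on $\g(\R^d;X)$. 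Because even-indexed (resp.\ odd-indexed) blocks have pairwise disjoint frequency supports, the orthogonality property of $\g$-norms gives exact additivity of squared norms over each subfamily, and combining the two yields $\|g\|_{\g(\R^d;X)}^2\eqsim\sum_j\|g_j\|_{\g(\R^d;X)}^2$ for any $g$ whose pieces live in the above annuli. Applying this to $g=(1-\Delta)^{s/2}f$, and using \eqref{eq:gammapointwisemult} together with the fact that on the $j$-th annulus the symbol $2^{-js}(1+|\xi|^2)^{s/2}$ and its reciprocal are bounded uniformly in $j$, I obtain
\[\|f\|_{\g^s(\R^d;X)}^2\eqsim\sum_{j\ge0}2^{2js}\,\|f_j\|_{\g(\R^d;X)}^2.\]

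\textbf{The band-limited estimate (the crux).} I claim that if $\widehat g$ is supported in $B(0,R)$, then $\|g\|_{\g(\R^d;X)}\lesssim R^{d(\frac1p-\frac12)}\|g\|_{L^p(\R^d;X)}$ when $X$ has type $p$, and $R^{d(\frac1q-\frac12)}\|g\|_{L^q(\R^d;X)}\lesssim\|g\|_{\g(\R^d;X)}$ when $X$ has cotype $q$, with constants independent of $R$ and $g$. To prove this I sample $g$ on a lattice $\Lambda$ of mesh $\sim R^{-1}$, fine enough for exact reconstruction $g=\sum_{\lambda\in\Lambda}g(\lambda)\,\kappa_\lambda$ with $\kappa_\lambda$ translates of a fixed band-limited (oversampled, hence Schwartz) interpolation kernel whose $L^2$-normalisations $\widetilde\kappa_\lambda$ are orthonormal and satisfy $\|\kappa_\lambda\|_{L^2}\eqsim R^{-d/2}$. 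Reading $g$ as the operator $\sum_\lambda\|\kappa_\lambda\|_{L^2}\,\widetilde\kappa_\lambda\otimes g(\lambda)$, orthonormality gives $\|g\|_{\g(\R^d;X)}=(\E\|\sum_\lambda\|\kappa_\lambda\|_{L^2}\,\g_\lambda\,g(\lambda)\|^2)^{1/2}$, while the vector-valued Plancherel--Pólya inequality gives $\sum_\lambda\|g(\lambda)\|^r\eqsim R^{d}\|g\|_{L^r(\R^d;X)}^r$. The defining inequality of type $p$ (resp.\ cotype $q$) applied to the Gaussian sum then yields the two bounds after inserting $\|\kappa_\lambda\|_{L^2}\eqsim R^{-d/2}$ and rescaling. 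This step is the main obstacle: one must set up the exact reconstruction with a well-behaved kernel and the $X$-valued Plancherel--Pólya inequality with constants uniform in $R$; alternatively one may quote the Besov-embedding results of Kalton--van Neerven--Veraar--Weis for this block estimate.

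\textbf{Assembling.} For (i), apply the type bound to $g=f_j$ with $R=2^{j+1}$ and insert into the square-function identity to get $\|f\|_{\g^s(\R^d;X)}^2\lesssim\sum_{j\ge0}2^{2j(s+d(\frac1p-\frac12))}\|f_j\|_{L^p(\R^d;X)}^2=\|f\|^2_{B^{s+d(\frac1p-\frac12)}_{p,2}(\R^d;X)}$; since $p\le2$ gives $B^{\sigma}_{p,p}\hookrightarrow B^{\sigma}_{p,2}$, part (i) follows (indeed with the slightly stronger second index $2$). For (ii), the cotype bound gives $2^{j(s+d(\frac1q-\frac12))}\|f_j\|_{L^q}\lesssim 2^{js}\|f_j\|_{\g}$ for each $j$; taking $\ell^q$-norms and using $q\ge2$ (so $\ell^2\hookrightarrow\ell^q$) yields $\|f\|_{B^{s+d(\frac1q-\frac12)}_{q,q}(\R^d;X)}\lesssim(\sum_j 2^{2js}\|f_j\|_{\g}^2)^{1/2}\eqsim\|f\|_{\g^s(\R^d;X)}$, with the obvious $\sup$-version when $q=\infty$. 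Transferring back to $\OO$ via the extension/restriction pair completes both embeddings.
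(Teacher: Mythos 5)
Your reduction to $\R^d$ and your single-block (band-limited) estimates are fine; note that the paper itself disposes of the whole proposition in one line, by citing \cite[Corollary 2.3]{KNVW} together with the boundedness of a Besov extension operator, so a self-contained proof like yours must essentially reconstruct the argument of \cite{KNVW}. The fatal step in your reconstruction is the Littlewood--Paley assembly: the claim that frequency-disjointness of the blocks gives ``exact additivity of squared $\g$-norms'', hence $\|g\|_{\g(\R^d;X)}^2\eqsim\sum_j\|g_j\|_{\g(\R^d;X)}^2$. No such orthogonality principle exists in $\g(H,X)$ for general $X$: if $H=\bigoplus_j H_j$ orthogonally and $T\in\g(H,X)$, then the Gaussian vectors $G_j$ attached to the restrictions $T|_{H_j}$ are independent, but the two-sided comparison $\E\|\sum_j G_j\|^2\eqsim\sum_j\E\|G_j\|^2$, with constants independent of the number of blocks, is exactly Gaussian type $2$ plus Gaussian cotype $2$, and so forces $X$ to be isomorphic to a Hilbert space (Kwapie\'n's theorem). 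Concretely, for $X=\ell^1_N$ and $G_j=\g_j e_j$ one has $\E\|\sum_{j\le N}G_j\|^2\eqsim N^2$ while $\sum_{j\le N}\E\|G_j\|^2=N$.

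Moreover this is not a repairable omission in the stated form, because the intermediate results your assembly produces are false: it would give $B^{s+d(\frac1p-\frac12)}_{p,2}\hookrightarrow\g^s$ under type $p$, and $\g^s\hookrightarrow B^{s+d(\frac1q-\frac12)}_{q,2}$ under cotype $q$. Take $X=\ell^p$ with $1\le p<2$ and $f=\sum_{j=1}^N g_j\otimes e_j$, where the $g_j$ are $L^2$-normalised functions with Fourier support in disjoint dyadic annuli $|\xi|\eqsim 2^j$: then $\|f\|_{\g(\R^d;X)}^2=\E\|\sum_{j\le N}\g_j e_j\|_{\ell^p}^2\eqsim N^{2/p}$, whereas $\|f\|_{B^{d(1/p-1/2)}_{p,2}}\eqsim N^{1/2}$, contradicting the first embedding; the analogous example in $\ell^q$, $q>2$, refutes the second. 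This is precisely why the microscopic Besov index must be $p$ (resp.\ $q$) and not $2$. The correct cross-block ingredient --- and this is what \cite{KNVW} uses --- is that type and cotype control sums of \emph{independent Gaussian vectors}: if $X$ has type $p$, then $\E\|\sum_j G_j\|^2\lesssim\bigl(\sum_j(\E\|G_j\|^2)^{p/2}\bigr)^{2/p}$ (symmetrise with Rademacher signs, apply Rademacher type $p$ pathwise, then Minkowski's inequality in $L^{2/p}$), and the reverse inequality with exponent $q$ holds under cotype $q$. Feeding your single-block estimates into these inequalities yields exactly the $\ell^p$- and $\ell^q$-sums of $2^{j\sigma}\|f_j\|_{L^p}$ and $2^{j\sigma}\|f_j\|_{L^q}$, i.e.\ the $B_{p,p}$ and $B_{q,q}$ norms. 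If you replace your additivity claim by these cross-block inequalities --- or simply cite \cite[Corollary 2.3]{KNVW} for the whole-space embeddings rather than only for the block estimate, which is all the paper does --- the rest of your outline (density, transfer to $\mathcal{O}$ via extension operators and the ideal property) goes through.
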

\begin{proof}
This follows from \cite[Corollary 2.3]{KNVW} and the boundedness of the
extension operator from $B^{s+d(\frac{1}{q} -
\frac12)}_{q,q}(\mathcal{O};X)$ to $B^{s+d(\frac{1}{q} -
\frac12)}_{q,q}(\R^d;X)$.
\end{proof}

\begin{remark}\label{rem:improvedembedding}
The following results can be found in \cite{V:Bessel} and improve on Proposition
\ref{prop:gammsobolev} in certain settings.
\begin{enumerate}
\item[\rm(i)] If $X$ is a $p$-convex Banach lattice with $p\in (1, 2]$, then in
Proposition \ref{prop:gammsobolev} (1) the space $B^{s+d(\frac{1}{p} -
\frac12)}_{p,p}(\mathcal{O};X)$ can be replaced by $H^{s+d(\frac{1}{p} -
\frac12),p}(\mathcal{O};X)$. The same holds if $X$ is a Banach space of type $2$ and then the space $H^{s,2}(\mathcal{O};X)$ embeds in $\g^s(\mathcal{O};X)$.
\item[\rm(ii)] If $X$ is a $q$-concave Banach lattice with $q\in [2,\infty)$, then in
Proposition \ref{prop:gammsobolev} (2) the space $B^{s+d(\frac{1}{q} -
\frac12)}_{q,q}(\mathcal{O};X)$ can be replaced by $H^{s+d(\frac{1}{q} -
\frac12),q}(\mathcal{O};X)$. The same holds if $X$ is a Banach space of cotype
$2$ and then $\g^s(\mathcal{O};X)$ embeds in $H^{s,2}(\mathcal{O};X)$.
\end{enumerate}
\end{remark}

The next result can be seen as a $\gamma$-Hardy inequality.
\begin{proposition}\label{prop:Hardy}
Let $X$ be a Banach space. For all $\alpha>0$ and $f\in \g(\R_+, \sigma^{-2\alpha + 1} d\sigma;X)$,
\begin{equation}\label{eq:HardyYoung}
\Big\|\sigma\mapsto \sigma^{-\alpha - \frac12} \int_0^\sigma f(t) \, \, dt \Big\|_{\g(\R_+;X)}
\leq \alpha^{-1} \big\|\sigma\mapsto \sigma^{-\alpha + \frac12} f(\sigma)\, \big\|_{\g(\R_+;X)}.
\end{equation}
\end{proposition}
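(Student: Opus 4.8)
The plan is to recognise the weighted Hardy operator as a superposition of dilations applied to the fixed function $g(\sigma):=\sigma^{-\a+\frac12}f(\sigma)$, and to control each dilation on $\g(\R_+;X)$ by its operator norm on $L^2(\R_+)$ via the ideal property (Proposition \ref{prop:ideal}). As a preliminary I would note that multiplication by $\sigma^{-\a+\frac12}$ is an isometric isomorphism of the underlying spaces $L^2(\R_+,\sigma^{-2\a+1}\,d\sigma)$ and $L^2(\R_+)$, which by the ideal property lifts to an isometric isomorphism from $\g(\R_+,\sigma^{-2\a+1}\,d\sigma;X)$ onto $\g(\R_+;X)$ carrying $f$ to $g$. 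Thus the right-hand side of \eqref{eq:HardyYoung} equals $\a^{-1}\n g\n_{\g(\R_+;X)}$, and the hypothesis $f\in\g(\R_+,\sigma^{-2\a+1}\,d\sigma;X)$ is exactly what makes $g\in\g(\R_+;X)$.

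Writing $f(t)=t^{\a-\frac12}g(t)$ and substituting $t=\sigma s$ in the inner integral, I would compute
\begin{equation}
\sigma^{-\a-\frac12}\int_0^\sigma f(t)\,dt = \int_0^1 s^{\a-\frac12}\, g(\sigma s)\,ds = \int_0^1 s^{\a-\frac12}(D_s g)(\sigma)\,ds,
\end{equation}
where $(D_s g)(\sigma):=g(\sigma s)$ is the dilation by $s\in(0,1)$. A change of variables gives $\n D_s\n_{\calL(L^2(\R_+))}=s^{-1/2}$, so by the ideal property $D_s$ extends to an operator $\wt D_s$ on $\g(\R_+;X)$ with $\n\wt D_s\n\le s^{-1/2}$. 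The heart of the argument is then the integral triangle inequality in the Banach space $\g(\R_+;X)$:
\begin{equation}
\Big\|\int_0^1 s^{\a-\frac12}\wt D_s g\,ds\Big\|_{\g(\R_+;X)}
\le \int_0^1 s^{\a-\frac12}\n\wt D_s g\n_{\g(\R_+;X)}\,ds
\le \Big(\int_0^1 s^{\a-1}\,ds\Big)\n g\n_{\g(\R_+;X)}
= \a^{-1}\n g\n_{\g(\R_+;X)},
\end{equation}
the final integral converging precisely because $\a>0$ and delivering the asserted constant $\a^{-1}$.

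The one point requiring care — and the step I expect to be the main obstacle — is to justify that the left-hand side of \eqref{eq:HardyYoung} genuinely equals the $\g(\R_+;X)$-valued Bochner integral $\int_0^1 s^{\a-\frac12}\wt D_s g\,ds$, rather than merely agreeing with it formally. I would establish the identity first for $g$ in the dense class of finite-rank elements represented by $X$-valued simple functions, where both sides are honest functions and the interchange of the two integrations is a routine Fubini computation, and I would check that $s\mapsto\wt D_s g$ is continuous from $(0,1)$ into $\g(\R_+;X)$ (using strong continuity of the dilation group on $L^2(\R_+)$ together with the local bound $s^{-1/2}$), so that the Bochner integral exists, its integrand being dominated by the integrable function $s\mapsto s^{\a-1}\n g\n_{\g(\R_+;X)}$. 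The general case then follows by approximation, since both sides depend continuously on $g\in\g(\R_+;X)$.
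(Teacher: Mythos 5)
Your proof is correct and follows essentially the same route as the paper's: the paper sets $u(\sigma)=\int_0^\sigma f(t)\,dt$, writes $\sigma^{-\alpha-\frac12}u(\sigma)=\sigma^{-\alpha+\frac12}\int_0^1 f(t\sigma)\,dt$, applies the triangle inequality for the $\g(\R_+;X)$-norm, and evaluates each term by the dilation scaling $\|\sigma\mapsto \sigma^{-\alpha+\frac12}f(t\sigma)\|_{\g(\R_+;X)}=t^{\alpha-1}\|s\mapsto s^{-\alpha+\frac12}f(s)\|_{\g(\R_+;X)}$, which is exactly the computation underlying your bound $\|\wt D_s\|\le s^{-1/2}$ combined with the factor $s^{\alpha-\frac12}$. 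The only differences are presentational: you phrase the dilations as operators extended via the ideal property and spell out the Bochner-integral, continuity, and density justifications, which the paper compresses into the remark that it suffices to consider step functions $f$.
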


\begin{proof} One way to prove this result is to observe that the corresponding inequality
holds with $\g(\R_+;X)$ replaced by $L^2(\R_+)$ and then to invoke the $\g$-extension theorem of
\cite{KaWe}. A simple direct proof runs as follows. {\em It suffices to consider step functions $f$.}
Let $u(\sigma) = \int_0^\sigma f(t) \, \, dt$. Writing $\sigma^{-\alpha - \frac12}  u(\sigma)
= \sigma^{-\alpha + \frac12}  \int_0^1 u'(t \sigma) \, dt$
and taking $\g$-norms on both sides,
\begin{align*}
\|\sigma\mapsto \sigma^{-\alpha - \frac12}  u(\sigma)\|_{\g(\R_+;X)} &= \Big\|\sigma\mapsto \sigma^{-\alpha + \frac12}  \int_0^1 u'(t \sigma) \, dt\Big\|_{\g(\R_+;X)}
\\ & \leq \int_{0}^1 \|\sigma\mapsto \sigma^{-\alpha + \frac12}  u'(t \sigma)\|_{\g(\R_+;X)}\, dt
\\ & = \int_{0}^1 t^{\alpha-1} \, dt \, \|s\mapsto s^{-\alpha + \frac12}  u'(s)\|_{\g(\R_+;X)}
\\ & = \alpha^{-1} \|s\mapsto s^{-\alpha + \frac12}  f(s)\|_{\g(\R_+;X)}.
\end{align*}
\end{proof}

\subsection{Operators with a bounded $H^\infty$-calculus}
In this section we recall some known connections between $H^\infty$-functional calculi and
$\gamma$-radonification. At the same time, this section serves to fix notations and terminology.
We refer the reader to \cite{KuWe} for an in-depth treatment of these matters;
for more on $H^\infty$-calculi the reader may also wish to consult \cite{DHP,Haase:2}.

For $\theta\in (0,\pi)$ we set
$ \Sigma_\theta = \{z\in \C\setminus\{0\}: \ |\arg(z)| < \theta\},$
where the argument is taken in $(-\pi,\pi)$.
A closed densely defined linear operator
$(A, \Dom(A))$ on a Banach space $X$ is said to be
{\em sectorial of type $\sigma\in (0,\pi)$} if it is injective and has dense range,
its spectrum is contained in $\overline{\Sigma_\sigma}$,
and for all $\sigma'\in(\sigma,\pi)$ the set
$$ \big\{z(z+A)^{-1}: \ z\in \C\setminus\{0\}, \  |\arg(z)|> \sigma'\big\}$$
is uniformly bounded.
If infimum of all $\sigma\in (0,\pi)$ such that sectorial of type
$\sigma$ is called the {\em sectoriality angle} of $A$.
The operator $A$ is said to be {\em $\g$-sectorial of type
$\sigma$} if $A$ is sectorial of type
$\sigma$ and the set
$\{z(z+A)^{-1}: \ z\in \C\setminus\{0\}, \  |\arg(z)|> \sigma'\}$ is $\g$-bounded
for all $\sigma'\in (\sigma,\pi)$.
The {\em $\g$-sectoriality angle} of $A$ is defined analogously.

As is well known, if $A$ is a sectorial operator of type $\sigma\in (0,\frac12\pi)$,
then $-A$ generates
a strongly continuous bounded analytic semigroup $S = (S(t))_{t\ge 0}$.
If $A$ is $\gamma$-sectorial of type $\sigma\in (0,\frac12\pi)$,
then the family $\{S(t): t\ge 0\}$ is $\g$-bounded \cite[Theorem 2.20]{KuWe}.

Let $H^\infty(\Sigma_\theta)$ denote the Banach space of all bounded analytic functions
$f:\Sigma_\theta\to \C$, endowed with the supremum norm, and let
$H_0^\infty(\Sigma_\theta)$ denote the linear subspace of all
$f\in H^\infty(\Sigma_\theta)$ for which there exists $\e>0$ and $C\ge 0$ such that
$$ |f(z)| \le \frac{C|z|^\e}{(1+|z|)^{2\e}}, \quad z\in \Sigma_\theta.$$
If $A$ is sectorial of type $\sigma_0\in (0,\pi)$, then for all $\sigma\in (\sigma_0,\pi)$ and
$f\in H_0^\infty(\Sigma_{\sigma})$ we may define the bounded operator $A$
by the Dunford integral
$$ f(A) = \frac1{2\pi i}\int_{\partial \Sigma_{\sigma}} f(z) (z+A)^{-1}\,dz.$$

A sectorial operator $A$ of type $\sigma_0\in (0,\pi)$ and let $\sigma\in (\sigma_0,\pi)$.
is said to have a {\em bounded $H^\infty$-calculus of type $\sigma$} (briefly, $A$ has a
{\em  bounded $H^\infty(\Sigma_{\sigma})$-calculus}) if there is a constant $M\ge 0$
such that for all $f\in H_0^\infty(\Sigma_{\sigma})$ we have
$$ \n f(A)\n \le M\n f\n_{H^\infty(\Sigma_\sigma)}.$$
The infimum of all $\sigma\in (0,\pi)$ such that $A$ has a
bounded $H^\infty(\Sigma_{\sigma})$-calculus is called the {\em $H^\infty$-angle} of $A$.

If $A$ has a bounded $H^\infty(\Sigma_{\sigma})$-calculus, there is a canonical
way to extend the mapping $f\mapsto f(A)$ to a bounded algebra homomorphism from
$H^\infty(\Sigma_\sigma)$ to $\calL(X)$ (of norm $\le M$).
We refer to the lecture notes \cite{KuWe} and the book \cite{Haase:2} for a comprehensive
treatment.

The following result is taken from \cite[Theorem 5.3]{KWcalc}.

\begin{proposition}\label{prop:functionalcalcgammasect}
Let $X$ be a Banach space with property $(\Delta)$. If $A$ has a bounded $H^\infty$-calculus of angle $\sigma$,
then $A$ is
$\g$-sectorial of the same angle $\sigma$.
\end{proposition}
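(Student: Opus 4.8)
The plan is to deduce $\g$-sectoriality from the a priori stronger statement that $A$ has a \emph{$\g$-bounded $H^\infty$-calculus}, by which I mean that the set $\{f(A):\ f\in H^\infty(\Sigma_{\sigma'}),\ \n f\n_{H^\infty(\Sigma_{\sigma'})}\le 1\}$ is $\g$-bounded for every $\sigma'\in(\sigma,\pi)$. This reduction suffices: for $|\arg z|>\sigma''$ the resolvent can be written as $z(z+A)^{-1}=r_z(A)$ with $r_z(\l)=z/(z+\l)$, and a direct estimate shows that $\{r_z:\ |\arg z|>\sigma''\}$ is uniformly bounded in $H^\infty(\Sigma_{\sigma'})$ whenever $\sigma<\sigma'<\sigma''$. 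Hence $\g$-boundedness of the calculus immediately yields $\g$-boundedness of $\{z(z+A)^{-1}:\ |\arg z|>\sigma''\}$ for each $\sigma''>\sigma$, which is precisely $\g$-sectoriality of angle $\sigma$.

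The main tool I would invoke is the Kalton--Weis characterisation of a bounded $H^\infty$-calculus through $\g$-valued square functions, which holds on an arbitrary Banach space. Fixing a nonzero $\psi\in H_0^\infty(\Sigma_{\sigma'})$ and writing $\psi_t(\l)=\psi(t\l)$, the boundedness of the $H^\infty(\Sigma_{\sigma'})$-calculus is equivalent to the two-sided estimate
\[
 \n x\n_X \eqsim \n t\mapsto \psi(tA)x\n_{\g(\R_+,\frac{dt}{t};X)},
\]
valid for $x$ in a dense subspace of $X$. The lower bound holds here because $A$ is sectorial with dense range, so no cotype or property $(\a)$ assumption is needed for this equivalence.

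The core step is then the following. Given $f_1,\dots,f_N\in H^\infty(\Sigma_{\sigma'})$ of norm $\le 1$ and $x_1,\dots,x_N\in X$, I would estimate $\E\n\sum_k\g_k f_k(A)x_k\n^2$ by applying the upper square function estimate inside the Gaussian average, which produces the quantity $\n t\mapsto \sum_k \g_k (\psi_t f_k)(A)x_k\n_{\g(\R_+,\frac{dt}{t};L^2(\O;X))}^2$. Since each $\psi_t f_k$ again lies in $H_0^\infty(\Sigma_{\sigma'})$, the goal is to move the uniformly bounded multipliers $f_k$ out of the square function, leaving $\n t\mapsto\sum_k\g_k\psi(tA)x_k\n_{\g(\R_+,\frac{dt}{t};L^2(\O;X))}^2$, which the lower square function estimate identifies with $\E\n\sum_k\g_k x_k\n^2$. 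Transferring the $f_k$ across is carried out by combining the $\g$-multiplier theorem (Proposition \ref{prop:KW}) with property $(\Delta)$: expanding each $f_k$ over dyadic frequency scales turns the expression into a Gaussian sum of order two with a triangular structure, in which the scale of $f_k$ interacts with the scale $t$ of $\psi_t$, and property $(\Delta)$ is exactly what bounds the triangular truncation of such a chaos. This decouples the two randomisations, namely the discrete one defining $\g$-boundedness and the continuous one carried by the $\g$-square function.

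I expect the decisive difficulty to be precisely this last point. Without property $(\Delta)$ one cannot in general push the multipliers $f_k$ through the Gaussian sum, and it is here that the hypothesis on $X$ enters in an essential way; all the remaining ingredients, namely the resolvent reduction and the two square function estimates, are available on any Banach space once a bounded $H^\infty$-calculus is present. Keeping track of the constants, the resulting $\g$-bound is controlled by the $H^\infty$-calculus constant $M$ and the property $(\Delta)$ constant of $X$, and one reads off that the $\g$-sectoriality angle does not exceed the $H^\infty$-angle $\sigma$.
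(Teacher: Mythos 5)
Your overall architecture is the right one, and it matches how the paper itself handles this statement: the paper gives no proof at all, but cites Kalton--Weis \cite{KWcalc}, Theorem 5.3, whose content is precisely the intermediate claim your proposal rests on — namely that on a Banach space with property $(\Delta)$ a bounded $H^\infty(\Sigma_\sigma)$-calculus is automatically $R$-bounded (hence $\g$-bounded) over the unit ball of $H^\infty(\Sigma_{\sigma'})$ for every $\sigma'>\sigma$. Your reduction from this to $\g$-sectoriality, via the functions $r_z(\l)=z/(z+\l)$, which form a uniformly bounded subset of $H^\infty(\Sigma_{\sigma'})$ as $z$ ranges over the relevant complementary sector, is correct and is exactly how the citation yields the proposition. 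So everything hinges on whether your argument for the $\g$-bounded calculus itself holds up.

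There is a genuine gap there, and it sits exactly at the step you yourself flag as decisive. The move of "expanding each $f_k$ over dyadic frequency scales" is not meaningful as stated: a general $f_k\in H^\infty(\Sigma_{\sigma'})$ is merely bounded and analytic and admits no canonical dyadic decomposition. What the actual Kalton--Weis argument does is insert a Calder\'on reproducing formula, so that the objects to be controlled are the operators $(\varphi_s f_k\psi_t)(A)$ with $\varphi,\psi\in H^\infty_0(\Sigma_{\sigma'})$; the boundedness of the calculus combined with the elementary bound $\sup_z|\varphi(sz)\psi(tz)|\lesssim\min(s/t,t/s)^{\e}$ gives geometric off-diagonal decay of these band operators, and only after discretizing over dyadic scales and reorganizing the outcome as a doubly indexed random sum does property $(\Delta)$ — boundedness of triangular truncations of double Rademacher sums — enter, to decouple the randomization in $k$ from the randomization over scales. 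None of this is set up in your sketch, and since it constitutes the entire content of the cited theorem, asserting that "property $(\Delta)$ is exactly what bounds the triangular truncation" does not prove the proposition. Two smaller inaccuracies: your claim that the two-sided square function equivalence needs no cotype assumption is false — the upper estimate $\n t\mapsto\psi(tA)x\n_{\g(\R_+,\frac{dt}{t};X)}\lesssim\n x\n$ requires finite cotype (see Proposition \ref{prop:Hinftysquare}); this is repairable, since property $(\Delta)$ implies finite cotype, as the paper notes, but it must be said. Also the two estimates are used with their names interchanged: dominating $\E\n\sum_k\g_k f_k(A)x_k\n^2$ by the square function expression uses the lower estimate $\n y\n\lesssim\n t\mapsto\psi(tA)y\n_{\g}$, while the final return to $\E\n\sum_k\g_k x_k\n^2$ uses the upper one.
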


Every UMD space and every Banach space with property $(\alpha)$ has property $(\Delta)$. Moreover, every Banach
space with property $(\Delta)$ has finite cotype. In particular, any Banach space which is isomorphic to a
closed subspace of a space $L^p$ with $p\in [1, \infty)$ has property $(\Delta)$.
For details we refer to \cite{KWcalc}.

From the point of view of
evolution equations, the most interesting class of operators with a bounded
$H^\infty$-calculus of angle $<\pi/2$ consists of
uniformly elliptic operators. Under mild boundedness and smoothness assumptions on the
coefficients, for all $1<p<\infty$ these operators admit a bounded $H^\infty(\Sigma_{\sigma})$-calculus
with $\sigma\in (0,\pi/2)$ on $L^p(\R^d)$, and on $L^p(\mathcal{O})$ with respect to various
boundary conditions if $\mathcal{O}\subseteq \R^d$ is a smooth domain (see \cite{DDHPV, KKW}
and references therein).
Another class of examples can be deduced from Dore's result: any sectorial operator $A$ of
type $\sigma_0\in (0,\pi)$, has a bounded $H^\infty(\Sigma_{\sigma})$-calculus on the real
interpolation space $D_A(\alpha, p)$ for all $\alpha>0$,
$p\in [1, \infty]$ and $\sigma>\sigma_0$ (see \cite{Haase:2}).

The following result is a consequence of \cite[Theorem 7.2, Proposition 7.7]{KaWe}.
It extends McIntosh's classical square function estimates for the Hilbert space
case (see \cite{McI}). The fact that no finite cotype assumption is needed follows by a careful
examination of the proof.

To avoid assumptions on the geometry of Banach spaces under consideration we consider the set
 $$X^\sharp:= \overline{\Dom(A^*)} \cap \overline{\Ran(A^*)}.$$ We denote by $A^{\sharp}$ the part of $A^*$
in $X^\sharp$ (see \cite[Section 15]{KuWe} for details).
\begin{proposition}\label{prop:Hinftysquare}
Let $A$ have a bounded
$H^\infty(\Sigma_{\sigma})$-calculus with $\sigma\in (0,\pi)$ on an arbitrary  Banach space $X$.
Then
for all
$\sigma'\in (\sigma,\pi)$ and all nonzero $\varphi\in H^\infty_0(\Sigma_{\sigma'})$,
\begin{align*}
\|\varphi(t A^\sharp) x^\sharp \|_{\g(\R_+,\frac{dt}{t};X)^*} &\lesssim
\|x^\sharp\|,\quad x^\sharp\in X^\sharp,
\\ \|\varphi(t A) x\|_{\g(\R_+,\frac{dt}{t};X)}& \gtrsim \|x\|,\quad x\in X.
\end{align*}
If $X$ has finite cotype, then we also have
\[
 \|\varphi(t A) x \|_{\g(\R_+, \frac{dt}{t};X)} \lesssim
\|x\|, \quad x\in X.
\]
In these inequalities the implicit constants are independent of $x^\sharp$ and $x$.
\end{proposition}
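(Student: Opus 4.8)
The plan is to organize all three inequalities around a single cotype-free boundedness statement for the integration operator
\[
U_\varphi f := \int_0^\infty \varphi(tA) f(t)\,\frac{dt}{t}, \qquad f\in\g(\R_+,\tfrac{dt}{t};X),
\]
namely the \emph{master estimate} $\|U_\varphi f\|_X \lesssim \|f\|_{\g(\R_+,\frac{dt}{t};X)}$, valid for every nonzero $\varphi\in H_0^\infty(\Sigma_{\sigma'})$ on an arbitrary Banach space. This is exactly the content I would extract from \cite[Theorem 7.2, Proposition 7.7]{KaWe}, and I would treat it as the single nontrivial input; the three displayed inequalities then follow from it by trace duality together with the Calder\'on reproducing formula supplied by the $H^\infty$-calculus. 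Throughout I use the standard facts from \cite[Section 15]{KuWe} that $X^\sharp$ is a norming subspace of $X^*$ and that $A^\sharp$ is sectorial and admits a bounded $H^\infty(\Sigma_\sigma)$-calculus on $X^\sharp$, together with the duality $\g(\R_+,\frac{dt}{t};X)^* = \gamma_\infty(\R_+,\frac{dt}{t};X^*)$ under the pairing $\langle f,g\rangle = \int_0^\infty \langle f(t),g(t)\rangle\,\frac{dt}{t}$ (see \cite{NeeCMA}), and the compatibility of the calculus with duality in the form $\langle \varphi(tA)u,x^\sharp\rangle = \langle u,\varphi(tA^\sharp)x^\sharp\rangle$ for $u\in X$ and $x^\sharp\in X^\sharp$.

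Granting the master estimate, the first (dual) inequality is immediate. For $x^\sharp\in X^\sharp$ and $f\in\g(\R_+,\frac{dt}{t};X)$ the adjoint relation gives
\[
\int_0^\infty \langle f(t),\varphi(tA^\sharp)x^\sharp\rangle\,\frac{dt}{t} = \int_0^\infty \langle \varphi(tA)f(t),x^\sharp\rangle\,\frac{dt}{t} = \langle U_\varphi f, x^\sharp\rangle,
\]
so that, taking the supremum over $\|f\|_{\g(\R_+,\frac{dt}{t};X)}\le 1$ and invoking the master estimate,
\[
\|\varphi(\cdot A^\sharp)x^\sharp\|_{\g(\R_+,\frac{dt}{t};X)^*} = \sup_{\|f\|\le 1}|\langle U_\varphi f,x^\sharp\rangle| \le \|x^\sharp\|\,\sup_{\|f\|\le1}\|U_\varphi f\|_X \lesssim \|x^\sharp\|.
\]
For the lower bound I would choose a companion $\psi\in H_0^\infty(\Sigma_{\sigma'})$ with $\int_0^\infty \psi(t)\varphi(t)\,\frac{dt}{t}=1$ (possible since $\varphi\neq 0$); the reproducing formula $\int_0^\infty \psi(tA)\varphi(tA)x\,\frac{dt}{t}=x$, valid on $\overline{\Ran(A)}=X$, then yields for every $x^\sharp\in X^\sharp$
\[
|\langle x,x^\sharp\rangle| = \Big|\int_0^\infty \langle \varphi(tA)x,\psi(tA^\sharp)x^\sharp\rangle\,\frac{dt}{t}\Big| \le \|\varphi(\cdot A)x\|_{\g(\R_+,\frac{dt}{t};X)}\,\|\psi(\cdot A^\sharp)x^\sharp\|_{\g(\R_+,\frac{dt}{t};X)^*}.
\]
Applying the dual inequality just proved (with $\psi$ in place of $\varphi$) bounds the second factor by a constant times $\|x^\sharp\|$, and taking the supremum over $x^\sharp\in X^\sharp$ with $\|x^\sharp\|\le 1$, together with the fact that $X^\sharp$ norms $X$, gives $\|x\|\lesssim\|\varphi(\cdot A)x\|_{\g(\R_+,\frac{dt}{t};X)}$. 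Convergence of the improper integrals is handled on a dense class of step functions via Proposition \ref{prop:conv}.

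The remaining upper bound is the only place finite cotype enters. Under that hypothesis $X$ contains no copy of $c_0$, so the $\gamma$- and $\gamma_\infty$-norms coincide and one has the duality $\g(\R_+,\frac{dt}{t};X)^* = \g(\R_+,\frac{dt}{t};X^*)$; dualizing the master estimate (applied to $A^\sharp$, through which the pairing of $X$ with $X^*$ factors) then converts the cotype-free bounds established above into the genuinely $\gamma$-valued estimate $\|\varphi(\cdot A)x\|_{\g(\R_+,\frac{dt}{t};X)}\lesssim\|x\|$. In other words, finite cotype is used solely to pass from the $\gamma_\infty$-norm, in which the upper square-function bound is free, to the $\gamma$-norm.

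The main obstacle is the master estimate itself, and it cannot be obtained from a crude second-moment computation. Indeed, writing the $\gamma_\infty$-norm through orthonormal systems $(h_j)$ in $L^2(\R_+,\frac{dt}{t})$ and applying the calculus bound $\|\sum_j\gamma_j\Psi_j(A)\|\le M\|\sum_j\gamma_j\Psi_j\|_{H^\infty(\Sigma_{\sigma'})}$ with $\Psi_j(z)=\int_0^\infty\varphi(tz)h_j(t)\,\frac{dt}{t}$, the scale invariance of the problem makes $\E\sup_{z}|\sum_j\gamma_j\Psi_j(z)|^2$ grow logarithmically in the number of functions $h_j$, so Bessel's inequality alone is insufficient. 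Genuine cancellation, exploiting the off-diagonal decay $\|\varphi(t\cdot)\eta(s\cdot)\|_{H^\infty}\lesssim (t/s\wedge s/t)^{\varepsilon}$ of products of $H_0^\infty$-functions together with the algebra-homomorphism property of the calculus, is required; this is precisely the delicate estimate carried out in \cite{KaWe}. The content of the present proposition is then the observation that this argument, and hence the dual and lower bounds deduced above, never invokes finite cotype.
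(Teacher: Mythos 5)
Your reduction of the first two inequalities to a single ``master estimate'' --- the boundedness of $U_\varphi: \g(\R_+,\frac{dt}{t};X)\to X$ --- is correct and is, in substance, what the paper intends: the paper gives no argument of its own beyond citing \cite{KaWe} and remarking that careful inspection of that proof shows no cotype is needed, and indeed the dual estimate is \emph{equivalent} to the master estimate (via the pairing computation and the fact that $X^\sharp$ norms $X$), while the lower estimate then follows from the Calder\'on reproducing formula with a companion $\psi$ exactly as you write. So for those two inequalities your reconstruction is sound, modulo the routine approximation issues you flag.

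The third inequality is where there is a genuine gap, and the route you propose cannot be repaired. The duality $\g(\R_+,\frac{dt}{t};X)^* = \g(\R_+,\frac{dt}{t};X^*)$ is \emph{not} a consequence of finite cotype: trace duality gives only the contractive inclusion $\g(H;X^*)\subseteq \g(H;X)^*$, and the reverse identification requires $X$ to be K-convex. Since $X=L^1$ has cotype $2$ but is not K-convex, and the proposition covers $L^1$, any derivation of the upper bound by dualizing the master estimate for $A^\sharp$ must break down there. Concretely, the master estimate for $A^\sharp$ only yields that $\varphi(\cdot A)x$ defines a bounded functional on $\g(\R_+,\frac{dt}{t};X^\sharp)$ of norm $\lesssim \|x\|$; to upgrade this to membership of $\gamma_\infty(\R_+,\frac{dt}{t};X)$ one must norm Gaussian sums $\|\sum_n \gamma_n y_n\|_{L^2(\O;X)}$ by testing against $X^\sharp$-valued elements of the $\gamma$-unit ball, and extracting the relevant coefficients $\E[\gamma_n\xi]$ from a test function $\xi$ with control of their Gaussian sum is precisely the boundedness of the Gaussian projection, i.e.\ K-convexity --- trace duality runs in the wrong direction. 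Moreover, your underlying picture that ``the upper square-function bound is free in the $\gamma_\infty$-norm'' and cotype only converts $\gamma_\infty$ into $\gamma$ is also false: for the diagonal operator $A=\mathrm{diag}(2^n)$ on $c_0$, which has a bounded $H^\infty$-calculus of arbitrarily small angle, taking $x=\sum_{m\le N}e_m$ and near-orthonormal test functions $h_k \approx \varphi(\,\cdot\,2^k)/\|\varphi(\,\cdot\,2^k)\|_{L^2(\frac{dt}{t})}$ produces a maximum of $N$ nearly independent Gaussians, so $\|\varphi(\cdot A)x\|_{\gamma_\infty(\R_+,\frac{dt}{t};c_0)}\gtrsim \sqrt{\log N}$ while $\|x\|=1$. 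Where finite cotype actually enters in \cite{KaWe} is the comparison of Gaussian with Rademacher averages: the calculus bounds the signed sums $\|\sum_n \epsilon_n \varphi(2^n t A)x\|$ uniformly because $\sup_z\sum_n|\varphi(2^n t z)|<\infty$, and the Maurey--Pisier theorem (valid exactly for spaces of finite cotype) converts these Rademacher bounds into Gaussian ones, which after integration over dyadic blocks gives the $\gamma$-estimate. For this inequality you should therefore either reproduce that discretization argument or simply cite it from \cite{KaWe}, as the paper does; it is not a corollary of the master estimate.
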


\section{Maximal $\g$-regularity\label{sec:detgamma}}

Let $-A$ generate a strongly continuous semigroup on a Banach space $X$ and let
$f\in \g(\R_+;X)$.
A locally integrable function $u:\R_+\to X$ is called a {\em weak solution} of
the Cauchy problem
\begin{equation}\label{eq:CP}
\left\{\begin{aligned} u' +  A u  & =  f \ \ \text{on} \ \R_+,
\\ u(0) & =  0,
\end{aligned}\right.
\end{equation}
if for all $t\in (0,\infty)$ and $x^*\in \Dom(A^*)$
\[\lb u(t), x^*\rb
+ \int_0^t \lb u(s), A^* x^*\rb \, ds =
\int_0^t \lb f, x^*\rb(s)\, ds.\]
Note that $\lb f, x^*\rb$ is well defined as an element of $L^2(\R_+)$. It follows
from \cite{Ball77} that weak
solutions, whenever they exist, are unique.

We shall be interested in regularity properties of weak solutions in the
situation when $A$ is a sectorial operator.

\begin{definition}
Let $A$ be a sectorial operator of angle $\sigma\in [0,\frac12\pi)$
and denote by $S$ the bounded analytic semigroup generated by $-A$.
We say that $A$ has {\em maximal $\g$-regularity} if
for all $f\in  C^\infty_{\rm c}(0,\infty; \Dom(A))$
the convolution $u = S*f$ satisfies $Au \in \g(\R_+;X)$ and
\begin{align}\label{eq:gammaDeterministicEst}
\|A u\|_{\g(\R_+;X)}  \le C \|f\|_{\g(\R_+;X)},
\end{align}
with constant $C$ independent of $f$.
\end{definition}

Note that for all
 $f\in C^\infty_{\rm c}(0,\infty; \Dom(A))$ the convolution $u = S*f$
takes values in
 $\Dom(A)$, so the above definition is meaningful. It is easy to check that,
in this situation, $u$ is the unique weak solution of \eqref{eq:CP} and in fact for all $t>0$ we have
\begin{equation}\label{eq:strsol}
u(t) + \int_0^t Au(s)\,ds = \int_0^t f(s)\,ds.
\end{equation}

The space $C^\infty_{\rm c}(0,\infty; \Dom(A))$ is dense in $\g(\R_+;X)$.
Hence if $A$ has maximal $\g$-regularity, the mapping
\begin{align}\label{eq:A-conv} f\mapsto Au = AS * f
\end{align}
admits a unique bounded
extension to $\g(\R_+;X)$.
Note that we do not claim that for general $f\in \g(\R_+;X)$ the convolution $S*f$
can represented by a function which takes values in $\Dom(A)$ almost everywhere.

Differentiating the identity \eqref{eq:strsol} with respect to $t$, we find that if $A$ has maximal
$\g$-regularity, then for all $f\in C^\infty_{\rm c}(0,\infty; \Dom(A))$ we have $u' = -Au + f\in \g(\R_+;X)$
and $$ \n u'\n _{\g(\R_+;X)}  \le C \|f\|_{\g(\R_+;X)},$$
with constant $C$ independent of $f$. As a consequence, also the mapping
$$f\mapsto u' = (S*f)'$$
admits a unique bounded extension to $\g(\R_+;X)$.

\begin{proposition}\label{prop:weak}
Let $A$ be a sectorial operator of angle
$<\pi/2$ on a Banach space $X$. If $A$ has maximal $\g$-regularity, then for all $f\in \g(\R_+;X)$ there exists a unique
weak solution $u$ to \eqref{eq:CP}.
This solution $u$ belongs to $C([0,T];X)$ and there exists a constant $C$, independent of $f$ and $T$, such that
$$ \n u\n_{C([0,T];X)} \le C\sqrt{T} \n f\n_{\g(\R_+;X)}.$$ \end{proposition}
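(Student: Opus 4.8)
The plan is to establish the estimate first for $f\in C^\infty_{\rm c}(0,\infty;\Dom(A))$ and then pass to general $f$ by density, taking uniqueness of weak solutions for granted via \cite{Ball77}. So fix $f\in C^\infty_{\rm c}(0,\infty;\Dom(A))$ and let $u=S*f$. By the discussion preceding the proposition, maximal $\g$-regularity yields $u'=-Au+f\in\g(\R_+;X)$ together with a bound $\n u'\n_{\g(\R_+;X)}\le C\n f\n_{\g(\R_+;X)}$ in which $C$ does not depend on $f$. Rearranging \eqref{eq:strsol} and using $u(0)=0$ gives the identity $u(t)=\int_0^t u'(s)\,ds$ for every $t>0$. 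The crucial move is to read this identity through the definition \eqref{eq:def-int}: viewing $u'$ as the associated $\g$-radonifying operator and using the canonical contractive embedding $\g(\R_+;X)\hookrightarrow\calL(L^2(\R_+),X)$, we have $\int_0^t u'(s)\,ds=u'(\one_{(0,t)})$, so that
\[\n u(t)\n=\n u'(\one_{(0,t)})\n\le\n u'\n_{\g(\R_+;X)}\,\n\one_{(0,t)}\n_{L^2(\R_+)}=\sqrt{t}\,\n u'\n_{\g(\R_+;X)}.\]
Hence $\n u(t)\n\le C\sqrt t\,\n f\n_{\g(\R_+;X)}$, and taking the supremum over $t\in[0,T]$ gives the desired estimate for smooth $f$.

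Next I would treat general $f\in\g(\R_+;X)$ by approximation. Choose $f_n\in C^\infty_{\rm c}(0,\infty;\Dom(A))$ with $f_n\to f$ in $\g(\R_+;X)$, which is possible by density. By linearity and the estimate just proved, the corresponding solutions $(u_n)$ form a Cauchy sequence in $C([0,T];X)$ for each $T>0$, so they converge to some $u\in C([0,T];X)$ obeying $\n u\n_{C([0,T];X)}\le C\sqrt T\,\n f\n_{\g(\R_+;X)}$. To check that $u$ is a weak solution of \eqref{eq:CP}, I pass to the limit in the weak formulation written for each $u_n$ and each $x^*\in\Dom(A^*)$: the boundary term converges since $u_n(t)\to u(t)$ in $X$; the term $\int_0^t\lb u_n(s),A^*x^*\rb\,ds$ converges by uniform convergence on $[0,t]$; and the right-hand side converges because $\lb f_n,x^*\rb\to\lb f,x^*\rb$ in $L^2(\R_+)$ by \eqref{eq:L2est}, so the integrals against $\one_{(0,t)}$ converge. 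Uniqueness is already provided by \cite{Ball77}.

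The one genuinely delicate point is conceptual rather than computational: for general $f\in\g(\R_+;X)$ neither $f$ nor $u'$ need be represented by an $X$-valued function, so a priori the pointwise value $u(t)$ has no meaning. This is precisely what the identity $u(t)=u'(\one_{(0,t)})$ circumvents, by reading $u'$ as an operator in $\calL(L^2(\R_+),X)$ through the contractive embedding. The identification does double duty: it produces the $\sqrt t$-scaling of the estimate, via $\n\one_{(0,t)}\n_{L^2(\R_+)}=\sqrt t$, and, since $t\mapsto\one_{(0,t)}$ is $\tfrac12$-H\"older continuous into $L^2(\R_+)$, it even shows directly that the limit is H\"older continuous on $[0,T]$. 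With this in place, the approximation argument only has to transport the already-established weak-solution identity through the limit, which is routine.
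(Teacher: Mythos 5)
Your proposal is correct and follows essentially the same route as the paper: approximate $f$ by functions in $C^\infty_{\rm c}(0,\infty;\Dom(A))$, derive the $\sqrt{T}$-estimate from maximal $\g$-regularity combined with $\|\one_{(0,t)}\|_{L^2(\R_+)}=\sqrt{t}$, conclude that the approximating solutions are Cauchy in $C([0,T];X)$, pass to the limit in the weak formulation, and invoke \cite{Ball77} for uniqueness. The only cosmetic difference is that you bound $\|u'(\one_{(0,t)})\|$ through the contractive embedding $\g(\R_+;X)\hookrightarrow\calL(L^2(\R_+),X)$, whereas the paper tests differences $u_n-u_m$ against $x^*\in X^*$ and uses \eqref{eq:L2est}; these are the same estimate in dual form.
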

\begin{proof}
The uniqueness has already been observed. To prove the existence, we use an approximation argument.
Let $f\in \g(\R_+;X)$. Choose a sequence $(f_n)_{n\geq 1}$ in $C^\infty_{\rm c}(0,\infty; \Dom(A))$
such that $\limn f_n = f$ in $\g(\R_+;X)$. For each $n\geq 1$, let $u_n = S*f_n$.  By the maximal
$\g$-regularity of $A$, we obtain that $(A u_n)_{n\geq 1}$ and $(u_n')_{n\geq 1}$ are Cauchy
sequences in $\g(\R_+;X)$, and hence convergent to $v$ and $w$ in $\g(\R_+;X)$ respectively.
Fix $T\in \R_+$ and $t\in [0,T]$. For all $x^*\in X^*$ one has
\begin{align*}
|\lb u_n(t) - u_m(t), x^*\rb|& \stackrel{\rm(a)}{\leq} \|\lb A u_n - A u_m, x^*\rb\|_{L^1(0,t)} + \|\lb f_n-f_m, x^*\rb\|_{L^1(0,t)}
\\ & \stackrel{\rm(b)}{\leq} \sqrt{t} \big(\|A u_n - A u_m\|_{\g(0,t;X)} + \|f_n-f_m\|_{\g(0,t;X)}\big)\|x^*\|
\\ & \stackrel{\rm(c)}{\leq} \sqrt{t} (C+1) \|f_n-f_m\|_{\g(0,t;X)}\|x^*\|,
\end{align*}
In (a) we used that $u_n-u_m$ is a weak solution to \eqref{eq:CP} with right-hand side $f_n - f_m$,
in (b) the Cauchy-Schwarz inequality and \eqref{eq:L2est}, and in (c) the inequality \eqref{eq:gammaDeterministicEst}.
Taking the supremum over all $x^*\in X^*$ with $\|x^*\|\leq 1$ and $t\in [0,T]$, it follows that
\[\|u_n - u_m\|_{C([0,T];X)}\leq \sqrt{T} (C+1)\|f_n-f_m\|_{\g(0,t;X)}.\]
It follows that $(u_n)_{n\geq 1}$ is a Cauchy sequence in $C([0,T];X)$ and hence it is convergent to some $u_T\in C([0,T];X)$.
Since $T$ was arbitrary, a uniqueness argument shows that one can find a continuous function
$u:\R_+\to X$ such that $u = u_T$ on $[0,T]$. Finally, we claim that $u$ is a weak solution
to \eqref{eq:CP}. Indeed, this follows from the definition of a weak solution for $u_n$, and the
fact that $\limn u_n=u$ in $C([0,T];X)$, $\limn \lb u_n, A^*x^*\rb = \lb u, A^*x^*\rb$ in $L^1(0,T)$
and $\lim_{n\to \infty}\lb f_n, x^*\rb =  \lb f, x^*\rb$ in $L^1(0,T)$ for each $T<\infty$.
\end{proof}

\medskip
The main result of this section, Theorem \ref{thm:gammaDeterministicEst},
asserts that every $\g$-sectorial operator $A$ of angle
$<\pi/2$ on $X$ has maximal $\g$-regularity. In order to prepare for the proof we make
a couple of preliminary observations. As we have already noted, the $\g$-sectoriality of
$A$ implies that
the set $\mathscr{S} = \{S(t):\ t \ge 0\}$ is $\g$-bounded.
Moreover, by Proposition \ref{prop:C1}, for all $t>0$ and $x\in \Dom(A)$ the reverse orbit
$s\mapsto S(t-s) x$ defines an element of $\g(0,t;X)$. Hence, by Proposition \ref{prop:KW},
for all $f\in \g(0,t;X)$,
\begin{equation}\label{eq:gamma-element}
 s\mapsto S(t-s)f(s)
\end{equation}
is well defined as an element in $\g(0,t;X)$.
We may now define $u: \R_+\to X$ by
$$u(t) := \int_0^t  S(t-s)f(s)\,ds
$$
using the notation introduced in \eqref{eq:def-int}. Recall that the above integral is not defined as a Bochner integral in general.
Likewise, the two integrals in part (i) of the next theorem should be interpreted
in the sense of \eqref{eq:def-int}.

As usual, for $\alpha\in (0,1]$  we denote by $C^{\alpha}(\R_+;X)$
the Banach space of bounded $\alpha$-H\"older continuous functions with values in $X$.
Sometimes we will also write $C^{0}(\R_+;X)$ for the space  $BUC(\R_+;X)$ of bounded uniformly continuous functions.

\begin{theorem}\label{thm:gammaDeterministicEst}
Let $A$ be a $\g$-sectorial operator of angle
$<\pi/2$ on a Banach space $X$. Then $A$ has maximal $\g$-regularity. Moreover, for all
$f\in \g(\R_+;X)$, the convolution $u:=S*f$ satisfies
\begin{enumerate}
 \item[\rm(i)]
$u$ is a weak solution of \eqref{eq:CP} and for all $t\ge 0$ we have
\begin{equation*}
u(t) + \int_0^t A u(s) \, ds = \int_0^t f(s) \, ds.
\end{equation*}
Here, $Au\in \gamma(\R_+;X)$ is defined in the limiting sense as in \eqref{eq:A-conv}.
In particular, $u:\R_+\to X$ is uniformly continuous.
\end{enumerate}
If $0\in \varrho(A)$, then:
\begin{enumerate}
\item[\rm(ii)] (space-time regularity) For all $\theta\in [0,1]$, $u\in
\g^{\theta}(\R_+;\Dom(A^{1-\theta}))$ and
\begin{align*}
\|u\|_{\g^{\theta}(\R_+;\Dom(A^{1-\theta}))}& \eqsim_{A,X,\theta}
\|f\|_{\g(\R_+;X)}.
\end{align*}
\end{enumerate}
\begin{enumerate}
\item[\rm(iii)] (space-time regularity) For all $\theta\in (\frac12 ,1]$ $u\in
C^{\theta-\frac12}(\R_+;\Dom(A^{1-\theta}))$ and
\[\|u\|_{C^{\theta-\frac12}(\R_+;\Dom(A^{1-\theta}))} \lesssim_{A,X,\theta}
\|f\|_{\g(\R_+;X)}.\]
\end{enumerate}
If $0\in \varrho(A)$ and $A$ has a bounded $H^\infty$-calculus of angle $<\pi/2$, then
\begin{enumerate}
\item[\rm(iv)] (trace estimate) $u:\R_+\to \Dom(A^{\frac12})$ is bounded and uniformly continuous,
and we have
\begin{align*}
\|u\|_{BUC(\R_+;\Dom(A^{\frac12}))} \lesssim_{A,X} \|f\|_{\g(\R_+;X)}.
\end{align*}
\end{enumerate}
\end{theorem}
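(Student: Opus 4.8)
The plan is to obtain the core maximal-$\g$-regularity estimate by an operator-valued Fourier multiplier argument and then to read off the regularity statements (i)--(iv) from it. First I would prove $\|Au\|_{\g(\R_+;X)}\lesssim\|f\|_{\g(\R_+;X)}$ for $f\in C^\infty_{\rm c}(0,\infty;\Dom(A))$. Extending $f$ by zero to $\R$, the convolution $u=S*f$ is supported in $\R_+$, and since $A$ is sectorial of angle $<\tfrac12\pi$ we have $\int_0^\infty e^{-i\xi t}S(t)\,dt=(i\xi+A)^{-1}$, so that $\widehat{Au}(\xi)=A(i\xi+A)^{-1}\hat f(\xi)=:M(\xi)\hat f(\xi)$. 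Writing $M(\xi)=I-i\xi(i\xi+A)^{-1}$ and noting that the punctured imaginary axis lies in a sector $\{|\arg z|>\sigma'\}$ with $\sigma'\in(\omega,\tfrac12\pi)$, $\omega$ being the $\g$-sectoriality angle, the family $\mathscr{M}=\{M(\xi):\xi\neq0\}$ is $\g$-bounded. Because the Fourier--Plancherel transform is an isomorphism of $\g(\R;X)$, and because $Au$ is a priori an element of $\g(\R_+;X)$ for such $f$ (it is smooth and decays like the analytic semigroup, so Proposition \ref{prop:C1} applies on $(0,b)$ and on $(b,\infty)$), Proposition \ref{prop:KW} gives $\|Au\|_{\g(\R_+;X)}=\|\widehat{Au}\|_{\g(\R;X)}\le\g(\mathscr{M})\,\|\hat f\|_{\g(\R;X)}\eqsim\|f\|_{\g(\R_+;X)}$. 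This is maximal $\g$-regularity, and $f\mapsto Au$ extends to $\g(\R_+;X)$ as in \eqref{eq:A-conv}.

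For (i) I would argue by approximation as in Proposition \ref{prop:weak}: choosing $f_n\in C^\infty_{\rm c}(0,\infty;\Dom(A))$ with $f_n\to f$ and passing to the limit in \eqref{eq:strsol} yields $u(t)+\int_0^t Au(s)\,ds=\int_0^t f(s)\,ds$ with $Au$ the limiting object, and a test against $x^*\in\Dom(A^*)$ shows $u$ is the weak solution. Uniform continuity is then automatic: for any $g\in\g(\R_+;X)$ one has $\int_s^t g=T_g\one_{(s,t)}$, whence $\|\int_s^t g\|_X\le\|g\|_{\g(\R_+;X)}(t-s)^{1/2}$ by the contractive embedding $\g(\R_+;X)\embed\calL(L^2(\R_+),X)$; applied to $g=f$ and $g=Au$ this even gives $u\in C^{1/2}(\R_+;X)$.

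For (ii) I would rerun the multiplier argument with the time-weight built in. Since $0\in\varrho(A)$ and $\widehat{A^{1-\theta}u}(\xi)=A^{1-\theta}(i\xi+A)^{-1}\hat f(\xi)$, Proposition \ref{prop:equivgammas} reduces the upper bound to the $\g$-boundedness of $\{(1+\xi^2)^{\theta/2}A^{1-\theta}(i\xi+A)^{-1}:\xi\in\R\}$. The piece near $\xi=0$ is handled by $A^{-\theta}$ times the $\g$-bounded family $\{A(i\xi+A)^{-1}\}$, so the hard part will be the genuinely fractional family $\{|\xi|^\theta A^{1-\theta}(i\xi+A)^{-1}\}$, which must be shown $\g$-bounded from $\g$-sectoriality alone. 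I would obtain this from the integral representation $A^{1-\theta}=c_\theta\int_0^\infty t^{-\theta}A(t+A)^{-1}\,dt$ together with the stability of $\g$-bounds under products of commuting $\g$-bounded families and under averaging against scalar kernels of uniformly bounded $L^1$-norm. The reverse estimate $\gtrsim$ follows by applying the same reasoning to $f=u'+Au$. Statement (iii) then follows from (ii) via the $\g$-Sobolev embedding $\g^\theta(\R_+;Y)\embed C^{\theta-\frac12}(\R_+;Y)$ for $\theta>\tfrac12$, applied with $Y=\Dom(A^{1-\theta})$: writing $u(t)-u(t')=\int_\R m_{t,t'}(\xi)\,(1+\xi^2)^{\theta/2}\hat u(\xi)\,d\xi$ with $m_{t,t'}(\xi)=(2\pi)^{-1}(e^{it\xi}-e^{it'\xi})(1+\xi^2)^{-\theta/2}$, one has $\|m_{t,t'}\|_{L^2(\R)}\lesssim|t-t'|^{\theta-\frac12}$ (this is where $\theta>\tfrac12$ enters), and the contractive embedding bounds the increment in $Y$.

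Finally, for the endpoint (iv), where $\theta=\tfrac12$ falls outside the range of (iii), I would invoke the $H^\infty$-calculus through Proposition \ref{prop:Hinftysquare}. Writing $A^{1/2}S(\sigma)=\sigma^{-1/2}\psi(\sigma A)$ with $\psi(z)=z^{1/2}e^{-z}\in H_0^\infty$, I would, for $x^*\in X^\sharp$, compute $\lb A^{1/2}u(t),x^*\rb=\int_0^t\lb f(s),(A^\sharp)^{1/2}S^\sharp(t-s)x^*\rb\,ds$; by the duality of $\g(0,t;X)$ with $\g(0,t;X)^*$ this is at most $\|f\|_{\g(0,t;X)}\,\|\sigma\mapsto\psi(\sigma A^\sharp)x^*\|_{\g(\R_+,\frac{d\sigma}{\sigma};X)^*}$, where I used the substitution $\sigma=t-s$ and the identity $\|\sigma^{-1/2}h\|_{\g(\R_+,d\sigma)}=\|h\|_{\g(\R_+,d\sigma/\sigma)}$. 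The dual square-function bound of Proposition \ref{prop:Hinftysquare} makes the last factor $\lesssim\|x^*\|$ uniformly in $t$, so taking the supremum over $\|x^*\|\le1$ gives $\|A^{1/2}u(t)\|\lesssim\|f\|_{\g(\R_+;X)}$; an analogous estimate on the increments $A^{1/2}(u(t)-u(t'))$, or a density argument using the continuity available for smooth $f$, yields the stated uniform continuity. I expect the main obstacle to be the fractional $\g$-boundedness in (ii); once it is secured, the remaining parts are largely organisational.
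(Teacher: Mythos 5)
Your proposal follows, in substance, the same route as the paper. The core estimate is obtained there exactly as you do it: Fourier transform, $\widehat{Au}(\xi)=A(i\xi+A)^{-1}\widehat{f}(\xi)$, $\g$-boundedness of this family from $\g$-sectoriality, then Proposition \ref{prop:KW} combined with the Fourier--Plancherel isomorphism of $\g(\R;X)$. Parts (i) and (iv) are also argued as you propose: (i) by approximation plus the Cauchy--Schwarz bound $\|\int_s^t g\,\|\le\sqrt{t-s}\,\|g\|_{\g(\R_+;X)}$, and (iv) by dualising against $X^\sharp$ and invoking the dual square-function estimate of Proposition \ref{prop:Hinftysquare}; your explicit rescaling from $d\sigma$ to $d\sigma/\sigma$ is a detail the paper leaves implicit, though you should also record, as the paper does via \cite[Proposition 15.4]{KuWe}, that $X^\sharp$ is norming for $X$ before taking the supremum. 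Two real differences. For (ii), the paper does not prove the fractional $\g$-boundedness but quotes \cite[Lemma 10]{KuWePert} for the families $\{(is)^\theta(is+A)^{-\theta}\}$ and $\{A^{1-\theta}(is+A)^{-1+\theta}\}$; your self-contained plan via Balakrishnan is workable, but not as literally written, since after inserting $A^{1-\theta}=c_\theta\int_0^\infty t^{-\theta}A(t+A)^{-1}\,dt$ the kernel $t^{-\theta}$ is not integrable on $(0,\infty)$. The averaging argument only closes if you split the integral at $t=|\xi|$ and redistribute powers: on $(0,|\xi|)$ write the integrand as $t^{-\theta}|\xi|^{\theta-1}\cdot A(t+A)^{-1}\cdot |\xi|(i\xi+A)^{-1}$, on $(|\xi|,\infty)$ as $t^{-\theta-1}|\xi|^{\theta}\cdot t(t+A)^{-1}\cdot A(i\xi+A)^{-1}$; the scalar kernels then have $L^1$-norms $(1-\theta)^{-1}$ and $\theta^{-1}$ uniformly in $\xi$, and each operator factor runs through a $\g$-bounded family. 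For (iii), you replace the paper's route (the embedding $\g^\theta\hookrightarrow B^{\theta-\frac12}_{\infty,\infty}$ of Proposition \ref{prop:gammsobolev} with $q=\infty$, plus Triebel's identification with $C^{\theta-\frac12}$) by a direct kernel estimate $\|m_{t,t'}\|_{L^2}\lesssim|t-t'|^{\theta-\frac12}$ and the contractive embedding of $\g$ into $\calL(L^2,Y)$; this is correct and more elementary.

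The one step that does not survive scrutiny is your reverse estimate in (ii). From $f=u'+Au$ you only get $\|f\|_{\g(\R_+;X)}\le\|u'\|_{\g(\R_+;X)}+\|Au\|_{\g(\R_+;X)}\lesssim\|u\|_{\g^1(\R_+;X)}+\|u\|_{\g(\R_+;\Dom(A))}$, i.e.\ domination by the sum of the two endpoint norms, not by the single intermediate norm $\|u\|_{\g^\theta(\R_+;\Dom(A^{1-\theta}))}$. No multiplier argument can repair this: the relevant family $(i\xi+A)A^{\theta-1}(1+\xi^2)^{-\theta/2}$ is not even uniformly bounded in $\xi$, and already for scalar $A=a>0$ and $\theta<1$ a high-frequency $f$ makes $\|u\|_{\g^\theta(\R_+;\Dom(A^{1-\theta}))}$ arbitrarily small relative to $\|f\|_{\g(\R_+;X)}$. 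To be fair, the paper's own proof of (ii) establishes only the inequality $\lesssim$ despite the stated $\eqsim$; the honest two-sided statement is $\|u\|_{\g^1(\R_+;X)}+\|u\|_{\g(\R_+;\Dom(A))}\eqsim\|f\|_{\g(\R_+;X)}$, obtained by combining the upper bounds at $\theta=0$ and $\theta=1$ with the identity $f=u'+Au$. You should therefore delete your claimed derivation of $\gtrsim$ or replace it by this endpoint-sum equivalence.
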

\begin{proof}
We claim that if $f\in C^\infty_{\rm c}(0,\infty; \Dom(A))$, then
for all $\theta\in [0,1]$ we have $D^{\theta}A^{1-\theta}u \in \g(\R_+;X)$
and
\begin{align}\label{eq:gammaDeterministicEstlem}
\|D^{\theta}A^{1-\theta} u\|_{\g(\R_+;X)} \leq C \|f\|_{\g(\R_+;X)},
\end{align}
for some constant $C$ independent of $f$.

To see this let $v :=
D^{\theta}A^{1-\theta} u$. Then
\[\widehat{v}(s) = (is)^{\theta} A^{1-\theta}(is + A)^{-1} \widehat{f}(s).\]
As in \cite[Lemma 10]{KuWePert}
one sees that for all $\theta \in [0,1]$, the operator families
\[\mathcal{T}_1 = \{(is)^{\theta} (is + A)^{-\theta}:s\in \R\setminus \{0\}\},  \ \ \
\text{and} \ \ \ \mathcal{T}_2 = \{A^{1-\theta}(is + A)^{-1+\theta}:s\in \R\setminus \{0\}\} \]
are $\g$-bounded. Hence also $\mathcal{T}_1\mathcal{T}_2$ is $\g$-bounded. In particular,
$$\{(is)^{\theta} A^{1-\theta}(is + A)^{-1}:s\in
\R\setminus \{0\}\}$$ is $\g$-bounded.
Therefore \eqref{eq:Fisometry} and Proposition \ref{prop:KW} imply that
\begin{align*}
\|D^\theta A^{1-\theta} u\|_{\g^{\theta}(\R_+;X)} & = 2\pi\|s\mapsto (is)^{\theta} A^{1-\theta}(is +
A)^{-1} {\widehat f}(s)\|_{\g(\R;X)} \\ & \leq C_{A,\theta}
2\pi \|\widehat{f}\|_{\g(\R;X)} = C_{A,\theta} \|f\|_{\g(\R_+;X)}.
\end{align*}
Maximal $\g$-regularity is obtained by taking $\theta=0$ in \eqref{eq:gammaDeterministicEstlem}.

(i):
In Proposition \ref{prop:weak} we have already seen that $u$ is a weak solution.
Let $(f_n)_{n\geq 1}$ and $(u_n)_{n\geq 1}$ be as in the proof of Proposition \ref{prop:weak}.
Then by \eqref{eq:gammaDeterministicEst} $(A u_n)_{n\geq 1}$ is a Cauchy sequence. Since
$0\in \varrho(A)$, it follows that $(u_n)_{n\geq 1}$ is a Cauchy sequence in $\g(\R_+;\Dom(A))$
and hence convergent
to some $v$ in $\g(\R_+;\Dom(A))$. In the proof of Proposition \ref{prop:weak}, we have seen
that $\limn u_n=u$ in $C([0,T];X)$ for all $T<\infty$. Therefore, one has $v=u$. By
\eqref{eq:def-int}, the required identity holds for each of the $u_n$. The identity for $u$ is obtained
by passing to the limit $n\to \infty$, noting that
$\limn (A u_n)(\one_{[0,t]}) = (A u)(\one_{[0,t]})$ and $\limn f_n(\one_{[0,t]}) = f(\one_{[0,t]})$
in $X$.

(ii): By \eqref{eq:gammaDeterministicEstlem}, applied with $\theta=0$, one sees that $A
u\in \g(\R_+;X)$. Since $0\in \varrho(A)$, this implies that $u\in
\g(\R_+;\Dom(A))$. This proves the result for $\theta=0$. Moreover, $u\in
\g(\R_+;\Dom(A^{1-\theta}))$ for all $\theta\in (0,1]$.  Now the result follows
from \eqref{eq:gammaDeterministicEstlem} and Proposition \ref{prop:equivgammas}.

(iii): By (ii) and Proposition \ref{prop:gammsobolev} with $q=\infty$,
$\g^\theta(\R_+;\Dom(A^{1-\theta})) \hookrightarrow
B^{\theta -\frac12}_{\infty, \infty}(\R_+; \Dom(A^{1-\theta}))$ for all $\theta\in [0,1]$.
If $\theta\in(\frac12 ,1]$, the latter space coincides with  $C^{\theta-\frac12}(\R_+;\Dom(A^{1-\theta}))$
(see \cite[Remark 2.2.2.3 and Corollary 2.5.7]{Tri83}).

(iv): For $f\in C^\infty_{\rm c}(0,\infty; \Dom(A))$ it is clear that $u\in BUC(\R_+;\Dom(A^{\frac{1}{2}}))$;
here we use $0\in \varrho(A)$ to see that the semigroup $S$ is exponentially stable.
Now fix $t\in \R_+$ and  $\varepsilon>0$. Since $X^\sharp$ induces an equivalent norm
on $X$, say $\frac1M\n \cdot\n \le |\!|\!|\cdot |\!|\!|\le \n \cdot\n$ (see \cite[Proposition 15.4]{KuWe}),
we can find $x^*\in X^\sharp$
with $|\!|\!|x^*|\!|\!|= 1$ such that $|\lb A^{\frac{1}{2}}S*f(t), x^*\rb|
\geq (1-\varepsilon)|\!|\!|A^{\frac{1}{2}}S*f(t)|\!|\!|$.
Let $S^\sharp$ be the part of $S^*$ in $X^\sharp$. Then
\begin{align*}
\frac{1-\varepsilon}{M}\|A^{\frac{1}{2}}S*f(t)\| & \leq \int_0^t |\lb A^{\frac{1}{2}}S(t-s) f(s), x^*\rb| \, ds
\\ & = \int_0^t |\lb f(s), (A^\sharp)^{\frac{1}{2}}S^\sharp(t-s)  x^*\rb| \, ds
\\ & \leq \|f\|_{\g(0,t;X)} \|(A^\sharp)^{\frac{1}{2}}S^\sharp(t-\cdot)  x^*\|_{\g(0,t;X)^*}
\\ & \leq \|f\|_{\g(\R_+;X)} \|(A^\sharp)^{\frac{1}{2}}S^\sharp(\cdot)  x^*\|_{\g(\R_+;X)^*}
\\ & \leq C_A \|f\|_{\g(\R_+;X)},
\end{align*}
where in the last step we used Proposition \ref{prop:Hinftysquare}. Since $t\in\R_+$
and $\varepsilon>0$ where arbitrary this yields the required estimate. The case $f\in \g(\R_+;X)$
follows by an approximation argument.
\end{proof}

\begin{remark} \
\begin{enumerate}
\item We expect that in the situation of part (i), $S*f$ does not take values in $\Dom(A)$ almost everywhere
on $(0,\infty)$ and is not differentiable almost everywhere on $(0,\infty)$ in general. However,
if $X$ has cotype $2$, then by Remark \ref{rem:improvedembedding} we have continuous embeddings
$\g^{1}(\R_+;X)\hookrightarrow W^{1,2}(\R_+;X)$ and $\g^0(\R_+;\Dom(A))\hookrightarrow L^2(\R_+;\Dom(A))$, and hence
\[u\in W^{1,2}(\R_+;X)\cap L^2(\R_+;\Dom(A)).\]
\item If $X$ has cotype $q\in [2, \infty]$, then by Proposition \ref{prop:gammsobolev}, for all $\theta\in [0,1]$ we have
\[u\in B^{\theta+\frac1q-\frac12}_{q,q}(\R_+;\Dom(A^{1-\theta}))\]
which improves (iii). A further improvement can be obtained with Remark \ref{rem:improvedembedding}.
\item Part (iv) can be seen as a special case of characterization of traces we will present below in Theorem \ref{thm:traces}.
\end{enumerate}
\end{remark}

\begin{remark}
Under the assumption that $X$ has finite cotype and $A$ has a bounded $H^\infty$-calculus of
angle $<\pi/2$ and $0\in \varrho(A)$, part (iii) of the theorem is optimal in the sense that it
cannot be improved to regularity in $BUC(\R_+;\Dom(A^{\beta}))$ for any $\beta>\frac12$.
To see this let $x\in X$ be arbitrary and define $f_x:\R_+\to X$ by
by $f_x(s) =A^{\frac12} S(s) x$. By Proposition \ref{prop:Hinftysquare},
$f_x\in \g(\R_+;X)$ and $\|f_x\|_{\g(\R_+;X)} \le K\|x\|$ with constant $K$ independent of $x$.
If we had $ S*f \in BUC(\R_+;\Dom(A^{\beta}))$ for some $\beta>\frac12$ and all
$f\in \g(\R_+;X)$, then by a closed graph argument for all $t>0$
we would obtain
\begin{align*}
\ \qquad \|t A^{\beta+\frac12} S(t)x\| & \le \|t A^{\frac12} S(t)x\|_{\Dom(A^\b)}  = \|S*f_x(t)\|_{\Dom(A^\b)}
\\ & \le \|S*f_x(t)\|_{BUC(\R_+;\Dom(A^\b))}\le C\|f_x\|_{\g(\R_+;X)} \leq C K\|x\|.
\end{align*}
Now let $M\ge 1$ and $\omega>0$ be such that $\|S(t)\|\leq M e^{-\omega t}$ for all $t\in \R_+$.
Without loss of generality we may assume $\beta-\frac12 = \frac1N$ for some integer $N\in \N\setminus\{0\}$.
Then for all $t\in (0,1)$,
\begin{align*}
\|A^{\b-\frac12} S(t)x\| & =  \Big\|\int_t^\infty A^{\beta+\frac12} S(s)x \, ds\Big\| \leq \int_t^\infty \|S(s/2)\| \, \|A^{\beta+\frac12} S(s/2)x\| \, ds
\\ & \leq \int_t^\infty  M e^{-\omega s/2} CK   (s/2)^{-1}  \|x\| \, ds  \lesssim (1-\log(t)) \|x\|,
\end{align*}
This is known to be false if $A$ is unbounded. Indeed, from the above estimate one sees that, for all $t\in (0,1)$,
$\|A S(Nt)\|\leq  \|A^{\b-\frac12} S(t)\|^N\lesssim (1-\log(t))^N.$
Hence for all $s\in (0,\frac1N)$ one has $\|A S(s)\|\lesssim (1-\log(s/N))^N$.
In particular, $\limsup_{s\downarrow 0}\|s A S(s)\| = 0$, and this implies that $A$ is bounded (see \cite[Theorem 2.5.3]{Pa}).
\end{remark}

Theorem \ref{thm:gammaDeterministicEst} admits the following converse.

\begin{theorem}\label{thm:convmaxgamma}
Suppose $A$ is a sectorial operator of angle $\sigma\in (0,\pi/2)$ on a Banach space $X$.
If  $A$ has maximal $\g$-regularity and $0\in\varrho(A)$, then $A$ is $\g$-sectorial.
\end{theorem}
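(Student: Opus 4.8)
The plan is to reverse the computation in the proof of Theorem~\ref{thm:gammaDeterministicEst}: maximal $\g$-regularity will be turned into $\g$-boundedness of the resolvent family on the imaginary axis, which is then propagated to a sector. For $f\in C^\infty_{\rm c}(0,\infty;\Dom(A))$ and $u=S*f$ one has $\widehat{Au}(s)=m(s)\widehat f(s)$ with $m(s):=A(is+A)^{-1}$, exactly as in \eqref{eq:gammaDeterministicEstlem} with $\theta=0$. Since $0\in\varrho(A)$ and $A$ is sectorial of angle $<\pi/2$, the map $s\mapsto m(s)$ is defined and norm continuous (indeed real-analytic) on all of $\R$. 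The operator $f\mapsto A(S*f)$ is a causal convolution, hence commutes with translations; as translations act isometrically on $\g(\R;X)$ and the compactly supported step functions are dense, the maximal $\g$-regularity estimate \eqref{eq:gammaDeterministicEst} on $\R_+$ extends to the full line. Combined with the Fourier--Plancherel isomorphism \eqref{eq:Fisometry}, this shows that the pointwise multiplier $M\colon g\mapsto m(\cdot)g(\cdot)$ is bounded on $\g(\R;X)$, say with norm $K$.

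Next I would deduce that $\mathscr M:=\{m(s):s\in\R\}$ is $\g$-bounded. The cleanest self-contained route uses the ideal property. Given $x_1,\dots,x_n\in X$ and pairwise disjoint intervals $J_1,\dots,J_n$, put $e_j:=|J_j|^{-1/2}\one_{J_j}$, so that $(e_j)$ is orthonormal in $L^2(\R)$, and set $g:=\sum_j e_j\otimes x_j$. A direct computation with the Pettis representation \eqref{eq:def-int} gives $(Mg)e_j=\langle m\rangle_{J_j}x_j$, where $\langle m\rangle_{J}:=|J|^{-1}\int_J m(s)\,ds$. Letting $P=\sum_j e_j\otimes e_j$ be the orthogonal projection onto $\mathrm{span}\{e_j\}$, Proposition~\ref{prop:ideal} yields
\[\Big\|\sum_j\g_j\,\langle m\rangle_{J_j}x_j\Big\|_{L^2(\O;X)}=\|(Mg)P\|_{\g(\R;X)}\le\|Mg\|_{\g(\R;X)}\le K\,\|g\|_{\g(\R;X)}=K\,\Big\|\sum_j\g_j x_j\Big\|_{L^2(\O;X)},\]
so the averages $\{\langle m\rangle_{J}:J\}$ are $\g$-bounded by $K$, with no appeal to type or cotype. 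By norm continuity, $\langle m\rangle_{J}\to m(s_0)$ as $J\downarrow\{s_0\}$, and since $\g$-boundedness passes to strong limits, $\mathscr M$ is $\g$-bounded by $K$. (Alternatively, this is precisely the converse of Proposition~\ref{prop:KW} recorded in the remark following it.) Consequently $\{is(is+A)^{-1}:s\in\R\setminus\{0\}\}=\{I-m(s)\}$, i.e. $\{z(z+A)^{-1}:\arg z=\pm\tfrac\pi2\}$, is $\g$-bounded.

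It remains to propagate this to a sector. Fix $\sigma'\in(\sigma,\pi)$. Sectoriality gives $\|(z+A)^{-1}\|\le C_{\sigma'}/|z|$ on $\{|\arg z|\ge\sigma'\}$, and for $w$ near a point $z$ of a ray one has the Neumann expansion $(w+A)^{-1}=(z+A)^{-1}\sum_{k\ge0}\big((z-w)(z+A)^{-1}\big)^k$. Because $\g$-bounded families are stable under scalar multiples, products, absolutely convergent series and strong limits, and because $\{(z+A)^{-1}\}=\{z^{-1}\cdot z(z+A)^{-1}\}$ inherits $\g$-boundedness from the previous step, this expansion shows that the region on which $\{w(w+A)^{-1}\}$ is $\g$-bounded grows by a fixed angular increment at each step; starting from the two rays $\arg z=\pm\tfrac\pi2$ and iterating finitely many times covers all of $\{|\arg z|>\sigma'\}$. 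As $\sigma'>\sigma$ was arbitrary, $A$ is $\g$-sectorial.

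I expect the decisive point to be the second step: in a general Banach space $\g(\R;X)$ and $L^2(\R;X)$ differ, so one cannot bound the error committed by replacing $m$ by its value at a point through a crude $L^2(\R;X)$ estimate. The averaging/ideal-property argument avoids this entirely, trading the pointwise value for the interval average and invoking only Proposition~\ref{prop:ideal}; norm continuity of $m$ (guaranteed by $0\in\varrho(A)$ together with sectoriality of angle $<\pi/2$) then recovers the pointwise family. The first and last steps are routine once this is in place.
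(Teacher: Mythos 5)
Your overall architecture coincides with the paper's (extend the estimate to the whole line, take Fourier transforms, apply the converse multiplier theorem, propagate from the imaginary axis to a sector), and your first two steps are correct. In fact the first step improves on the paper: the paper extends \eqref{eq:gammaDeterministicEst} from $\R_+$ to $\R$ by adapting Dore's splitting $S*f = U_Tf + V_Tf$, controlling the tail $U_Tf$ through exponential stability (this is where $0\in\varrho(A)$ enters there) and letting $T\to-\infty$ via Proposition~\ref{prop:conv}; your causality/translation argument yields the same claim for compactly supported $f$ with no stability at all. Your second step is in substance a proof of the converse of Proposition~\ref{prop:KW}, which the paper simply quotes from \cite{KaWe}. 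The one soft spot there is that your multiplier bound is proved on the dense class of Fourier transforms of test functions, so applying it to the step functions $g=\sum_j e_j\otimes x_j$ tacitly assumes that the bounded extension still acts by pointwise multiplication on such $g$; this is true but needs a short approximation argument (for instance via the estimate $\|h\, m(\cdot)x\|_{\g(\R;X)}\lesssim_A \|h\|_{L^2(\R)}\|x\|_{\Dom(A)}$ for $x\in\Dom(A)$, obtained by writing $m(\cdot)x$ as the integral of its derivative). Since you also offer the \cite{KaWe} citation, as the paper does, I view this as minor.

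The genuine gap is in the last step, the one you call routine. A Neumann step does \emph{not} enlarge the $\g$-bounded region by a fixed angular increment. To make $w(w+A)^{-1}=\frac{w}{z}\sum_{k\ge0}\bigl(\frac{z-w}{z}\bigr)^k\bigl(z(z+A)^{-1}\bigr)^{k+1}$ define a $\g$-bounded family, you need $|z-w|/|z|$ small compared with the reciprocal of the $\g$-bound $\g_j$ of the family already obtained (the series has $\g$-bound of order $\g_j/(1-\e\g_j)$), not merely small compared with $1/C_{\sigma'}$. Since each step multiplies the $\g$-bound by a factor bounded away from $1$, the admissible increments shrink geometrically and the total angle gained by iteration is of order $1/\g_0$; when $\g_0$ is large, the iteration covers neither the right half-plane (which $\g$-sectoriality of angle $<\pi/2$ does require, the relevant families being indexed by sectors containing it) nor any sector reaching angles near $\sigma$. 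Moreover, the statement you aim at --- $\g$-boundedness on every sector permitted by sectoriality, i.e.\ $\g$-sectoriality angle equal to $\sigma$ --- is stronger than the theorem and fails in general: the $\g$-sectoriality angle of a sectorial operator can strictly exceed its sectoriality angle. What the theorem needs (see the Corollary following it) is $\g$-sectoriality of \emph{some} angle $<\pi/2$, and this is exactly what the paper obtains by citing \cite[Theorem 2.20]{KuWe}. If you want to keep this step self-contained, the standard route is: on the right half-plane $w(w+A)^{-1}$ is bounded and holomorphic (here sectoriality of angle $<\pi/2$ is used), hence is the Poisson integral of its boundary values on $i\R$; since the strongly closed absolutely convex hull of a $\g$-bounded set is $\g$-bounded with at most twice the bound --- a lemma absent from your list of stability properties and not derivable from subadditivity, because the coefficients in these averages are summable only for each fixed $w$ --- the family is $\g$-bounded on the closed right half-plane, and a single Neumann step then crosses the imaginary axis by a fixed angle $\delta>0$, giving $\g$-sectoriality of angle $\pi/2-\delta$.
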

\begin{proof}
We claim that
for all Schwartz functions
$f\in \Schw(\R)\otimes \Dom(A)$ one has
\begin{equation}\label{eq:gammaDeterministicEst2}
\|A S* f\|_{\g(\R;X)}\leq C_A \|f\|_{\g(\R;X)}.
\end{equation}
Here $S*f:\R\to \R$ is defined by
\[S*f(t) := \int_{-\infty}^t S(t-s) f(s)\, ds.\]

We first show how the claim can
be applied to obtain the $\g$-sectoriality of $A$.
Let $g\in \Schw(\R)\otimes \Dom(A)$ be arbitrary and set $f = \hat{g}$. From
\eqref{eq:Fisometry} and \eqref{eq:gammaDeterministicEst2} one obtains that
\[\|s\mapsto A (is+A)^{-1} g(s)\|_{\g(\R;X)} \eqsim \|A S*f\|_{\g(\R;X)}
\le C_A\|f\|_{\g(\R;X)} \eqsim C_A\|g\|_{\g(\R;X)}\]
with universal implied constants in the equivalences.
By density, this estimate can be extended to all $g\in \g(\R;X)$.
Now by the converse of Proposition \ref{prop:KW} one sees that
$\{A(is+A)^{-1}: s\in \R \setminus \{0\}\}$ and hence
$\{s (is+A)^{-1}: s\in \R \setminus \{0\}\}$ is $\g$-bounded.
Now the result follows from \cite[Theorem 2.20]{KuWe}.

To prove the claim we adjust an argument in \cite[Theorem 7.1]{Dore}.
Fix $T\in \R$ and $f\in \Schw(\R)\otimes\Dom(A)$. For $t>T$ set
\[U_T f(t) := \int_{-\infty}^T S(t-s) f(s)\, ds  \ \ \text{and} \ \ V_T f(t) := \int_T^t S(t-s) f(s)\, ds.\]
Obviously, $S*f(t) = U_T f(t) + V_T f(t)$.  For $t\geq T+1$ one has
\[A U_T f(t) = \int_{1}^\infty A S(s) \one_{(-\infty,T+s)}(t) f(t-s) \, ds,\]
and one can estimate
\begin{align*}
\|A U_T f\|_{\g(T+1, \infty;X)} & \leq \int_1^\infty \|t\mapsto A S(s) f(t-s)\|_{\g(T+1,T+s;X)}\, ds
\\ & \leq \int_1^\infty \|AS(s)\| \|t\mapsto f(t-s)\|_{\g(T+1,T+s;X)}\, ds
\\ & = \int_1^\infty \|AS(s)\| \|f\|_{\g(T+1-s,T;X)}\, ds
\\ & \leq \n AS(1)\n \int_0^\infty \|S(r)\| \, dr \, \|f\|_{\g(\R;X)}
\\ & = K_A \|f\|_{\g(\R;X)},
\end{align*}
noting that the assumption
$0\in \varrho(A)$ implies the exponential stability of $S$.

On the other hand, if $t>T$, then
\[V_T f(t) = \int_0^{t-T} S(t-T-s) f(s+T)\, ds = S*h(t-T),\]
where $h(s) = f(s+T)\one_{[0,\infty)}(s)$. Hence, by \eqref{eq:gammaDeterministicEst}
applied with $h$ instead of $f$, and observing that $\n g(\cdot-T)\n_{\gamma(T+1,\infty;X)} = \n g\n_{\gamma(1,\infty;X)} \le \n g\n_{\gamma(\R_+;X)}$,
we obtain
\begin{align*}
\|AV_T f\|_{\g(T+1,\infty;X)} & = \|A S*h(\cdot-T)\|_{\g(T+1,\infty;X)}
\\ & \leq \|A S*h\|_{\g(\R_+;X)}\leq C_A \|h\|_{\g(\R_+;X)} \leq C_A \|f\|_{\g(\R;X)}.
\end{align*}
Using Proposition \ref{prop:conv}, we conclude that
\begin{align*}
\|AS*f\|_{\g(\R;X)} & = \lim_{T\to -\infty} \|AS*f\|_{\g(T+1, \infty;X)}
\\ & \leq \lim_{T\to -\infty} \big(\|AU_T f\|_{\g(T+1,\infty;X)}+ \|AV_T f\|_{\g(T+1,\infty;X)}\big)
\\ & \leq (K_A +C_A)\|f\|_{\g(\R;X)}.
\end{align*}
\end{proof}

\begin{corollary}
Let $X$ be a Banach space. Let $A$ be a sectorial operator of angle $<\pi/2$ with $0\in \varrho(A)$.
The following assertions are equivalent:
\begin{enumerate}
\item[\rm(1)] $A$ has maximal $\g$-regularity.
\item[\rm(2)] $A$ is $\g$-sectorial of angle $<\pi/2$.
\end{enumerate}
If, in addition, $X$ is a UMD Banach space, then {\rm (1)} and {\rm (2)} are equivalent to
\begin{enumerate}
\item[\rm(3)] $A$ has maximal $L^p$-regularity for some/all $p\in (1, \infty)$.
\end{enumerate}
\end{corollary}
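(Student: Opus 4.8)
The plan is to assemble the equivalence from the two main theorems of the section and one classical characterisation, so that essentially no new computation is required. For the equivalence of (1) and (2), note that the implication $(2)\Rightarrow(1)$ is precisely the content of Theorem \ref{thm:gammaDeterministicEst}, which asserts that every $\g$-sectorial operator of angle $<\pi/2$ has maximal $\g$-regularity. For the reverse implication $(1)\Rightarrow(2)$, I would simply invoke Theorem \ref{thm:convmaxgamma}: since the corollary assumes $0\in\varrho(A)$, its hypotheses are met verbatim, and it yields that $A$ is $\g$-sectorial. (One should record that $\g$-sectoriality here is automatically of angle $<\pi/2$, matching the sectoriality angle assumed in the corollary.) Thus $(1)\Leftrightarrow(2)$ is immediate.

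For the UMD part, the plan is to establish $(2)\Leftrightarrow(3)$ by reducing $\g$-sectoriality to $R$-sectoriality and then citing the classical theory. The key observation is that a UMD space has nontrivial type and hence finite cotype; by the remarks in Subsection \ref{subsec:Rbdd} (see \cite[Chapter 11]{DJT}), on a space of finite cotype the notions of $R$-boundedness and $\g$-boundedness coincide for families of operators. Consequently the resolvent family in the definition of $\g$-sectoriality is $\g$-bounded if and only if it is $R$-bounded, so $A$ is $\g$-sectorial of angle $<\pi/2$ if and only if it is $R$-sectorial of angle $<\pi/2$. I would then apply Weis's characterisation \cite{We}, valid precisely on UMD spaces, to the effect that $R$-sectoriality of angle $<\pi/2$ is equivalent to maximal $L^p$-regularity for a fixed (hence, by Dore's theorem \cite{Dore}, for some/all) $p\in(1,\infty)$. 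Combining these gives $(2)\Leftrightarrow(3)$ and completes the chain.

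I do not expect a genuine obstacle here, since the argument is a synthesis of previously established results; the proof is essentially bookkeeping. The two points that require care, and which I would state explicitly rather than gloss over, are first the verification that UMD forces finite cotype (so that the $R$- and $\g$-randomisations may be interchanged), and second the bookkeeping of angles: one must check that the angle $<\pi/2$ appearing in $\g$-sectoriality, in $R$-sectoriality, and in Weis's maximal $L^p$-regularity characterisation all refer to the same threshold, so that the equivalences compose without loss. The ``some/all'' clause in (3) is handled entirely by \cite{Dore} and needs no separate argument.
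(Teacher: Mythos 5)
Your proposal is correct and follows essentially the same route as the paper: $(1)\Leftrightarrow(2)$ by combining Theorems \ref{thm:gammaDeterministicEst} and \ref{thm:convmaxgamma}, and the $L^p$ part via the interchange of $R$- and $\g$-boundedness (using that UMD implies finite cotype) together with Weis's characterisation. The only cosmetic difference is that the paper notes $(3)\Rightarrow(2)$ holds in \emph{any} Banach space, since that direction only needs the one-way implication that $R$-boundedness implies $\g$-boundedness, whereas you invoke the full equivalence (hence UMD/finite cotype) in both directions — immaterial under the corollary's hypotheses.
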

For the definition of maximal $L^p$-regularity we refer to \cite{We}.

\begin{proof}
(1) $\Leftrightarrow$ (2) holds for any Banach space and follows
from Theorems \ref{thm:gammaDeterministicEst} and \ref{thm:convmaxgamma}.
(3) $\Rightarrow$ (2) holds for any Banach space (see \cite[Section 3.13]{KuWe}
and note that $R$-boundedness implies $\g$-boundedness). Finally (2) $\Rightarrow$ (3) holds in
UMD Banach spaces (see \cite{KuWe, We} and note that in spaces with finite cotype,
$\g$-sectoriality implies $R$-sectoriality; the space $X$, being UMD, has finite cotype).
\end{proof}

Clearly, for every $u\in \g^1(\R_+;X)$ one has $u\in C^{1/2}(\R_+;X)$ and in
particular $\Tr_0 u := u(0)$ exists in $X$ (see Proposition \ref{prop:gammsobolev}). It is
therefore a natural question to characterize the traces of the maximal regularity space
$\g^1(\R_+;X)\cap\g(\R_+;\Dom(A))$. This is achieved in the next theorem and will be proved for sectorial operators of arbitrary angle.

\begin{theorem}[Characterization of traces]\label{thm:traces}
Let $A$ be a $\g$-sectorial operator of angle
$<\pi$ on a Banach space $X$. Assume that $0\in \varrho(A)$ and that $A$ has a bounded $H^\infty$-calculus of angle $<\pi$.
\begin{enumerate}
\item[\rm(i)] The trace map $\Tr_0 u := u(0)$ is bounded from $\g^1(\R_+;X)\cap\g(\R_+;\Dom(A))$  to $\Dom(A^{1/2})$.

\item[\rm(ii)] If  $X$ has finite cotype, then the extension operator $\Ext(x)(t) = (1+tA)^{-1}x$
is bounded from $\Dom(A^{1/2})$ to $\g^1(\R_+;X)\cap\g(\R_+;\Dom(A))$ and
 defines a bounded right-inverse of $\Tr_0$.
\end{enumerate}
\end{theorem}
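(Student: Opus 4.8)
The plan is to prove both parts by replacing the (possibly non-existent) semigroup $e^{-tA}$ with the resolvents $(1+tA)^{-1}$, which are available for any sectorial $A$ of angle $<\pi$ and for which $\{(1+tA)^{-1}:t>0\}$ and $\{tA(1+tA)^{-1}:t>0\}=\{I-(1+tA)^{-1}:t>0\}$ are $\g$-bounded by the $\g$-sectoriality of $A$. I will repeatedly use the measure conversion $\|f\|_{\g(\R_+;X)}=\|t^{1/2}f\|_{\g(\R_+,\frac{dt}{t};X)}$ (from the fact that $g\mapsto t^{1/2}g$ is unitary from $L^2(\R_+,dt)$ onto $L^2(\R_+,\frac{dt}{t})$) to pass to square functions governed by Proposition \ref{prop:Hinftysquare}. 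Note that $\Tr_0$ is well defined: Proposition \ref{prop:gammsobolev}(ii) with cotype $\infty$ gives $\g^1(\R_+;X)\hookrightarrow C^{1/2}(\R_+;X)$, so $u(0)$ exists in $X$. For part (i) the core estimate is $\|t\mapsto A(1+tA)^{-1}u(0)\|_{\g(\R_+;X)}\lesssim_A\|u\|_{\g^1\cap\g(\Dom(A))}$, which I would derive from the pointwise identity $u(0)=u(t)-\int_0^t u'(s)\,ds$. Applying $A(1+tA)^{-1}$ and moving the factor $A$ through the resolvent onto $u(t)$ yields, for $u$ in a dense class of smooth $\Dom(A)$-valued functions,
\[
A(1+tA)^{-1}u(0)=(1+tA)^{-1}(Au)(t)-\big[tA(1+tA)^{-1}\big]\,\frac1t\int_0^t u'(s)\,ds .
\]
By Proposition \ref{prop:KW} the first term has $\g$-norm $\lesssim\|Au\|_{\g(\R_+;X)}$, while the second is $\lesssim\|\sigma\mapsto\tfrac1\sigma\int_0^\sigma u'\|_{\g(\R_+;X)}\le 2\|u'\|_{\g(\R_+;X)}$ by the $\g$-Hardy inequality (Proposition \ref{prop:Hardy} with $\alpha=\tfrac12$). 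Since $u\mapsto u(0)$ is continuous into $X$, the bound extends to the whole maximal regularity space by approximation.

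It then remains to upgrade finiteness of $\|t\mapsto A(1+tA)^{-1}u(0)\|_{\g(\R_+;X)}$ to membership $u(0)\in\Dom(A^{1/2})$ with $\|A^{1/2}u(0)\|\lesssim_A\|u\|$. For $y\in\Dom(A^{1/2})$ one has $A(1+tA)^{-1}y=t^{-1/2}\varphi(tA)A^{1/2}y$ with $\varphi(z)=z^{1/2}(1+z)^{-1}\in H_0^\infty$, so $\|A(1+tA)^{-1}y\|_{\g(\R_+;X)}=\|\varphi(tA)A^{1/2}y\|_{\g(\R_+,\frac{dt}{t};X)}\gtrsim\|A^{1/2}y\|$ by the \emph{cotype-free} lower square-function estimate of Proposition \ref{prop:Hinftysquare}. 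To avoid assuming a priori that $u(0)\in\Dom(A^{1/2})$, I would apply this to the regularisations $(1+\varepsilon A)^{-1}u(0)\in\Dom(A)$: commuting $(1+\varepsilon A)^{-1}$ with $A(1+tA)^{-1}$ and using the uniform $\g$-boundedness of $\{(1+\varepsilon A)^{-1}\}$ bounds $\|A^{1/2}(1+\varepsilon A)^{-1}u(0)\|$ by the already-finite $\g$-norm, uniformly in $\varepsilon$; letting $\varepsilon\downarrow0$ and invoking closedness of $A^{1/2}$ gives $u(0)\in\Dom(A^{1/2})$ with the required estimate. No geometric hypothesis on $X$ is used, matching (i).

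For part (ii), $\Tr_0\Ext(x)=(1+0\cdot A)^{-1}x=x$ and $t\mapsto(1+tA)^{-1}x$ is continuous up to $t=0$, so $\Ext$ is a right inverse of $\Tr_0$ once it maps $\Dom(A^{1/2})$ boundedly into $\g^1(\R_+;X)\cap\g(\R_+;\Dom(A))$. As $0\in\varrho(A)$, that norm is equivalent to $\|\Ext x\|_{\g}+\|(\Ext x)'\|_{\g}+\|A\,\Ext x\|_{\g}$, and $\tfrac{d}{dt}(1+tA)^{-1}x=-A(1+tA)^{-2}x$. Converting measures, these three norms equal $\|\varphi(tA)A^{-1/2}x\|$, $\|\psi(tA)A^{1/2}x\|$ and $\|\varphi(tA)A^{1/2}x\|$ in $\g(\R_+,\frac{dt}{t};X)$, where $\varphi(z)=z^{1/2}(1+z)^{-1}$ and $\psi(z)=z^{1/2}(1+z)^{-2}$ both lie in $H_0^\infty(\Sigma_{\sigma'})$. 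By the \emph{upper} square-function estimate of Proposition \ref{prop:Hinftysquare}—precisely where finite cotype enters—each is $\lesssim_A\|A^{1/2}x\|$ (for the first using $\|A^{-1/2}x\|\lesssim\|A^{1/2}x\|$). Summing gives $\|\Ext x\|_{\g^1\cap\g(\Dom(A))}\lesssim_A\|x\|_{\Dom(A^{1/2})}$.

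The main obstacle is part (i): casting the trace into the resolvent representation above in a form to which Proposition \ref{prop:KW} and the $\g$-Hardy inequality apply, and—more delicately—concluding $u(0)\in\Dom(A^{1/2})$ (not merely an a priori bound) without any cotype assumption, which forces the regularisation-and-limit argument. The density step justifying the transfer of $A$ onto the pointwise values $u(t)$ is routine but must be arranged with a dense class containing functions of nonzero trace (e.g. the span of $\Ext(\Dom(A))$ and $C_{\rm c}^\infty((0,\infty);\Dom(A))$).
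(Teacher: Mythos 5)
Your toolkit is the same as the paper's: both parts hinge on the resolvent square functions of Proposition \ref{prop:Hinftysquare} (the cotype-free lower bound \eqref{eq:equiv-norm} for (i), the upper bound --- where finite cotype enters --- for (ii)), combined with the multiplier theorem (Proposition \ref{prop:KW}) for the $\g$-bounded families $\{(1+tA)^{-1}\}$, $\{tA(1+tA)^{-1}\}$ and the $\g$-Hardy inequality (Proposition \ref{prop:Hardy}). Your part (ii) coincides with the paper's proof, with the details written out. In part (i) your algebraic entry point is simpler and correct: starting from $u(0)=u(t)-\int_0^t u'(s)\,ds$ you need Hardy only once (with $\alpha=\tfrac12$), whereas the paper starts from the Di Blasio-type identity \eqref{eq:Tr} and applies Hardy three times; on the smooth dense class your estimate $\|t\mapsto A(1+tA)^{-1}u(0)\|_{\g(\R_+;X)}\lesssim \|Au\|_{\g(\R_+;X)}+\|u'\|_{\g(\R_+;X)}$ is a genuine streamlining.

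The gap is in your two-step limiting argument at the end of (i). First, the claim that this bound ``extends to the whole maximal regularity space by approximation'' is unjustified: if $u_n\to u$ in the maximal regularity norm, then $A(1+tA)^{-1}u_n(0)\to A(1+tA)^{-1}u(0)$ only pointwise in $t$, and a pointwise limit of functions with uniformly bounded $\g$-norms need not lie in $\g(\R_+;X)$ --- this Fatou-type property holds for the class $\gamma_\infty$ of $\g$-summing operators but fails for $\g$ in general, and part (i) makes no geometric assumption (such as not containing $c_0$) ruling this out. Nor can you control $\|t\mapsto A(1+tA)^{-1}(u_n(0)-u(0))\|_{\g(\R_+;X)}$ by $\|u_n(0)-u(0)\|_X$: that is precisely the upper square-function estimate you do not have without cotype. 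Second, even granting the extension, ``letting $\varepsilon\downarrow0$ and invoking closedness of $A^{1/2}$'' does not work: your regularisation yields only $\sup_{\varepsilon}\|A^{1/2}(1+\varepsilon A)^{-1}u(0)\|<\infty$, while closedness of $A^{1/2}$ requires \emph{convergence} of $A^{1/2}(1+\varepsilon A)^{-1}u(0)$, which boundedness does not supply in a general (possibly non-reflexive) Banach space.

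Both defects disappear if you finish the way the paper does. On the dense class one has $u(0)\in\Dom(A)\subseteq\Dom(A^{1/2})$ outright, so your core estimate plus \eqref{eq:equiv-norm} give $\|A^{1/2}u(0)\|\lesssim\|u\|_{\g^1(\R_+;X)\cap\g(\R_+;\Dom(A))}$ there, with no regularisation at all; then extend $\Tr_0$ by density, using completeness of $\Dom(A^{1/2})$ together with the already-known continuity of $u\mapsto u(0)$ into $X$ (via $\g^1(\R_+;X)\hookrightarrow C^{1/2}(\R_+;X)$, Proposition \ref{prop:gammsobolev}), which shows that for an approximating sequence the traces $u_n(0)$ converge in $\Dom(A^{1/2})$ to $u(0)$. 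A further caveat: your proposed dense class (the span of $\Ext(\Dom(A))$ and $C^\infty_{\rm c}((0,\infty);\Dom(A))$) is awkward to justify, since its density would essentially presuppose the trace and extension theory being proved; use instead the paper's class $C^1_{\rm c}([0,\infty);\Dom(A))$, whose density follows from reflection, mollification and resolvent regularisation, and to which your identity and estimates apply verbatim.
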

Note that, as a consequence of (i) and the strong continuity of
the left-translation semigroup $T = (T(t))_{t\ge 0}$ in $ \g^1(\R_+;X)\cap\g(\R_+;\Dom(A))$,
given by $(T(t) u)(s) = u(t+s)$ for $t,s\in \R_+$,
we obtain a continuous embedding
\begin{equation}\label{eq:traceBUC}
\g^1(\R_+;X)\cap\g(\R_+;\Dom(A)) \embed BUC(\R_+;\Dom(A^{1/2})).
\end{equation}

\begin{proof}
(i): By density it suffices to consider functions $u\in C^1_{\rm c}([0,\infty);\Dom(A))$.
Indeed, fix $u\in \g^1(\R_+;X)\cap\g(\R_+;\Dom(A))$. Setting $u(t) = u(-t)$ for $t<0$, we may extend $u$ to a function in $\g^1(\R;X)\cap\g(\R;\Dom(A))$. Multiplying $u$ by a smooth function with compact support it suffices to consider the case where $u$ has compact support. Let $\varphi\in C^\infty_c(\R)$ be a positive function such that $\int \varphi = 1$. Let $\varphi_n(t) = n\varphi(n t)$. Set $u_n = \varphi_n*u$. Then by Proposition \ref{prop:equivgammas}
\[\|u- u_n\|_{\g^1(\R_+;X)} \leq \|u- u_n\|_{\g^1(\R;X)} = \| (1-\hat{\varphi}(\cdot/n))(1+|\cdot|^2)^{1/2} \hat{u}\|_{\g(\R;X)},\]
and the latter converges to zero by \cite[Proposition 2.4]{NVW1} and the fact that $(1+|\cdot|^2)^{1/2} \hat{u}\in \gamma(\R;X)$. Since $n(n+A)^{-1}\to I$ strongly, a further approximation argument yields the required result.

Note that $u \in \gamma(\R_+;D(A))$ and $u'\in \gamma(\R_+;X)$ (for instance by Proposition \ref{prop:C1} or \ref{prop:gammsobolev}).
By Proposition \ref{prop:Hinftysquare}, there is a constant $C$ such that for all $x\in X$ we have
\begin{equation}\label{eq:equiv-norm}
\|x\|\leq C\big\|\sigma\mapsto A^{1/2} (I+\sigma A)^{-1} x\big\|_{\g(\R_+;X)}.
\end{equation}
The method of proof is based on the argument in \cite[Lemmas 11, 12]{DiBlasio}
(see also \cite[Lemma 4.1]{MeySchn} and \cite[Theorem 1.4]{MeyVer2}).
For all $\sigma > 0$ we have
\begin{equation}\label{eq:Tr}
\Tr_0 u = u(0) = \sigma^{-1}\int_0^\sigma u(\tau)\,d \tau - \int_0^\sigma t^{-2} \int_0^t u(t)-u(\tau) \, d\tau  \, dt.
\end{equation}
Therefore, using \eqref{eq:equiv-norm} in which we view $x$ as a constant function of $\sigma$
and substitute for it the right-hand side of \eqref{eq:Tr} which is also constant in $\sigma$,
we obtain the estimate
\[\|\Tr_0 u\|_{\Dom(A^{1/2})}  \leq C (T_1+ T_2),\]
where
\begin{align*}
T_1 &= \Big\|\sigma\mapsto \sigma^{-1}\int_0^\sigma  A(I+\sigma A)^{-1} u(\tau)\,d \tau \Big\|_{\g(\R_+;X)},\\
T_2 &=\Big\|\sigma\mapsto \int_0^\sigma t^{-2} \int_0^t A (I+\sigma A)^{-1} (u(t)-u(\tau)) \, d\tau  \, dt\Big\|_{\g(\R_+;X)}.
\end{align*}
By assumption, the set
$\{(I+\sigma A)^{-1}:\ \sigma\geq 0\}$ is $R$-bounded, and hence $\gamma$-bounded.
Therefore, by Proposition \ref{prop:KW} and Proposition \ref{prop:Hardy} with $\alpha=1/2$,
\begin{align*}
T_1  & \leq C\Big\|\sigma\mapsto \sigma^{-1} \int_0^\sigma A u(\tau) \,d \tau \Big\|
\leq 2C \|A u\|_{\g(\R_+;X)}.
\end{align*}

For estimating $T_2$ note that
\[f(t) := t^{-2} \int_0^t u(t)-u(\tau) \, d\tau = t^{-2} \int_0^t \int_\tau^t u'(s) \, ds \, d\tau =t^{-2} \int_0^t s  u'(s) \, ds.\]
By assumption the set $\{\sigma A (1+\sigma A)^{-1}: \ \sigma\geq 0\}$
is $\gamma$-bounded. Applying Proposition \ref{prop:KW} and Proposition \ref{prop:Hardy} (first with $\alpha=1/2$ and then with $\alpha=3/2$) one obtains that
\begin{align*}
T_2 & \leq C \Big\|\sigma\mapsto \sigma^{-1} \int_0^\sigma f(t)  \, dt\Big\|_{\g(\R_+;X)} \leq 2C \|f\|_{\g(\R_+;X)}
\\ & = 2C \Big\|t\mapsto t^{-2} \int_0^t s  u'(s) \, ds\Big\|_{\g(\R_+;X)} \leq \frac{4 C}{3} \|u'\|_{\g(\R_+;X)}.
\end{align*}

(ii): \ This follows from the fact that $x = (1+0A)^{-1} x$, $\frac{d}{dt} (1+tA)^{-1} = -A(1+tA)^{-1}$
and $\|A(1+tA)^{-1} x\|_{\g(\R_+;X)} \eqsim \|A^{1/2}x\|_{X}$,
for all $x\in \Dom(A^{1/2})$ (see Proposition \ref{prop:Hinftysquare}).
\end{proof}

\section{Stochastic maximal $\g$-regularity\label{sec:stochgamma}}

Let $(\O,\mathcal{A},\P)$ be a probability space endowed with a filtration $\F =
(\F_t)_{t\geq 0}$, which we consider to be fixed throughout the rest of this paper.
An {\em $\F$-cylindrical Brownian motion in $H$} is a bounded linear
operator $W_H: L^2(\R_+;H)\to L^2(\O)$ such that:
\begin{enumerate}[\rm(i)]
\item for all $f\in L^2(\R_+;H)$ the random variable
$W_H(f)$ is centred Gaussian.
\item for all $t\in \R_+$ and $f\in L^2(\R_+;H)$ with support in $[0, t]$, $W_H(f)$ is $\F_t$-measurable.
\item for all $t\in \R_+$ and $f\in L^2(\R_+;H)$ with support in $[t, \infty)$, $W_H(f)$ is independent of $\F_t$.
\item for all $f_1,f_2\in L^2(\R_+;H)$ we have
$ \E (W_H(f_1)\cdot W_H(f_2)) = [f_1,f_2]_{L^2(\R_+;H)}.$
\end{enumerate}
It is easy to see that for all $h\in H$ the process $(W_H(t)h )_{t\ge 0}$ defined by
$$W_H(t)h := W_H(\one_{(0,t]}\otimes h)$$
is an $\F$-Brownian motion $W_H h$ (which is standard if $\n h\n=1$). Moreover, two such
Brownian motions $W_H h_1$  and $W_H h_2$ are independent if and only
if $h_1$ and $h_2$ are orthogonal in $H$.

For a Banach space $E$, let $L^0(\Omega;E)$ denote the vector space of
strongly measurable $E$-valued functions equipped with the (metric) topology induced by convergence in probability,
identifying functions which are equal almost surely. An element $G\in L^0(\Omega;\g(\R_+;H,X))$ is said to be
{\em adapted} (to the filtration $\F$) if for all $t\in \R_+$
and $h\in H$ the random variable $G_{t,h}:\O\to X$ given by $G_{t,h} = G(\one_{[0,t]} \otimes h)$ is
$\F_t$-measurable.
We denote by
$L^0_{\F}(\O;\g(\R_+;H,X))$ the closed subspace of $L^0(\O;\g(\R_+;H,X))$ consisting of its adapted elements.
It coincides with the closure of all adapted elementary step processes
in $L^0(\O;\g(\R_+;H,X))$ (see \cite[Section 2.4]{NVW1}).
We shall write  $L^0_{\F}(\O;\g(\R_+;X)) = L^0_{\F}(\O;\g(\R_+;\R,X))$.
For $p\in (0,\infty)$, the spaces $L_\F^p(\O;\g(\R_+;H,X))$ and $L_\F^p(\O;\g(\R_+;X))$ are defined similarly.

The {\em stochastic integral} with respect to an $H$-cylindrical
Brownian motion $W_H$ of an adapted simple process with values in $H\otimes X$ is defined by
$$ \int_0^t \one_{A\times(a,b]} \otimes (h\otimes x) := \one_A W_H(\one_{(a,b]\otimes h}) \otimes x$$
and linearity; here $0\le a<b<\infty$, $A\in \F_a$, $h\in H$, and $x\in X$.

The following result has been proved in \cite{NVW1} for $p\in (1,\infty)$;
the extension of \eqref{eq:NVW} to $p\in (0,\infty)$ is in \cite{CoxVer2}. Alternatively,
this extension
may be derived from Lenglart's inequality \cite{Lenglart}.

\begin{proposition}[It\^o isomorphism]\label{prop:Ito}
If $X$ is a UMD Banach space, then the mapping
$ G \mapsto \int^\cdot_0 G\,dW_H$ admits a unique extension to a homeomorphism from
$L^0_{\F}(\O;\g(\R_+;H,X))$ onto the space $M_{\rm c}^{\rm loc}(\R_+;X)$ of $X$-valued continuous local martingales.
Moreover, for all
$p\in (0,\infty)$ one has the two-sided estimate
\begin{equation}\label{eq:NVW}
\E \sup_{t\ge 0}\Big\| \int_0^t G\,dW_H\Big\|^p \eqsim_{p,X}
\E\|G\|_{\g(\R_+;H,E)}^p.
\end{equation}
In particular,
by Doob's maximal inequality, for $p\in (1, \infty)$ one has
\[\E \Big\| \int_0^\infty G\,dW_H\Big\|^p \eqsim_{p,X}
\E\|G\|_{\g(\R_+;H,E)}^p.\]
\end{proposition}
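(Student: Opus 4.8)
The plan is to treat the range $p\in(1,\infty)$ as already settled in \cite{NVW1}: there the two-sided bound \eqref{eq:NVW}, the continuity of $G\mapsto\int_0^\cdot G\,dW_H$ together with that of its inverse, and the surjectivity onto the continuous local martingales (the $X$-valued martingale representation theorem, in which the UMD property is essential) are all established. Thus the only genuinely new work is (a) extending the two-sided estimate \eqref{eq:NVW} to the remaining exponents $p\in(0,1]$, and (b) upgrading the $L^p$-isomorphism to a homeomorphism for the topology of convergence in probability on $L^0_\F(\O;\g(\R_+;H,X))$. For (a) I would use Lenglart's domination inequality \cite{Lenglart} with the exponent $p=2$ as the base case; for (b) I would localise and exploit that the small-$p$ estimate metrises convergence in probability.

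For (a), first reduce by density to adapted elementary processes $G$, for which $M:=\int_0^\cdot G\,dW_H$ is a genuine continuous $L^2$-martingale and, by optional stopping, $M_\tau=\int_0^\infty \one_{[0,\tau]}G\,dW_H$ for every bounded stopping time $\tau$. The process $A_t:=\|\one_{[0,t]}G\|_{\g(\R_+;H,X)}^2=\|G\|_{\g(0,t;H,X)}^2$ is adapted, nonnegative and nondecreasing by \eqref{eq:restriction}. The case $p=2$ of \eqref{eq:NVW}, applied to the stopped integrals, produces constants $c,C>0$ with
\[
c\,\E\|\one_{[0,\tau]}G\|_{\g(\R_+;H,X)}^2\le \E\|M_\tau\|^2\le C\,\E\|\one_{[0,\tau]}G\|_{\g(\R_+;H,X)}^2
\]
for all bounded $\tau$. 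The right inequality says that $t\mapsto\|M_t\|^2$ is Lenglart-dominated by $C A_t$; the left inequality, combined with $\|M_\tau\|^2\le(\sup_{s\le\tau}\|M_s\|)^2$, says that $A_t$ is Lenglart-dominated by $c^{-1}(\sup_{s\le t}\|M_s\|)^2$. Applying Lenglart's inequality with exponent $p/2\in(0,1)$ in each direction then yields
\[
\E\sup_{t\ge0}\|M_t\|^p\eqsim_{p,X}\E\|G\|_{\g(\R_+;H,X)}^p,\qquad 0<p<2,
\]
and together with the case $p\in[2,\infty)$ from \cite{NVW1} this establishes \eqref{eq:NVW} for all $p\in(0,\infty)$.

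For (b), I would extend the integral to $G\in L^0_\F(\O;\g(\R_+;H,X))$ by localisation: choose stopping times $\tau_n\uparrow\infty$ reducing $G$ to processes in $L^2_\F(\O;\g(\R_+;H,X))$, define $\int_0^\cdot G\,dW_H$ as the resulting continuous local martingale, and check consistency. To obtain bicontinuity for the $L^0$ topology I would combine the two-sided estimate from (a) with a routine truncation argument: fixing any $p\in(0,1)$, a sequence of $X$-valued random variables tends to $0$ in probability if and only if $\E(\|\cdot\|^p\wedge 1)\to0$, and \eqref{eq:NVW} controls both sides, so $G_n\to G$ in probability in $\g(\R_+;H,X)$ if and only if the associated martingales converge uniformly in probability. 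Uniqueness of the extension follows from density of the elementary processes, and surjectivity onto $M_{\rm c}^{\rm loc}(\R_+;X)$ is inherited from the representation theorem of \cite{NVW1} via the same localisation.

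The main obstacle is organisational rather than computational: one must verify that the base estimate persists, with uniform constants, for all stopped integrals (so that the hypotheses of Lenglart's inequality are met), and that the localisation in (b) is compatible with the topology on $M_{\rm c}^{\rm loc}(\R_+;X)$. The deepest input, surjectivity, is not re-proved here but imported from \cite{NVW1}, where the UMD assumption enters decisively; without it neither the martingale representation nor the two-sided bound \eqref{eq:NVW} can hold.
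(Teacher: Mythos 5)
Your proposal is correct and follows essentially the route the paper itself takes: the paper gives no independent proof but cites \cite{NVW1} for $p\in(1,\infty)$ and the homeomorphism onto $M_{\rm c}^{\rm loc}(\R_+;X)$, and notes that the extension of \eqref{eq:NVW} to $p\in(0,\infty)$ is in \cite{CoxVer2} or, alternatively, may be derived from Lenglart's inequality \cite{Lenglart} --- which is exactly the alternative you implement, with the two-sided $p=2$ bound for stopped integrals furnishing the mutual Lenglart domination and localisation giving the $L^0$-statement. The only caveat is that in part (b) the passage from the $p$-th moment estimate to bicontinuity in probability needs the stopping-time truncation you allude to (the bound \eqref{eq:NVW} does not control truncated moments directly), but this is the standard argument and all the needed ingredients are already in your part (a).
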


Now let $A$ be a sectorial operator of angle $<\pi/2$ on a Banach space $X$.
Our aim is to prove a stochastic $\g$-maximal regularity
result for the stochastic Cauchy problem
\begin{equation}\label{eq:SCP}
\left\{
  \begin{aligned}
dU +  A U \, dt  & = G \, d W_H \ \ \text{on} \ \R_+,
\\ u(0) & = 0.
  \end{aligned}
\right.
\end{equation}
Here, $W_H$ is a cylindrical Brownian motion in a Hilbert space $H$, defined
on a probability space and
$G\in L^0_{\F}(\O;\g(\R_+;H,X))$ is adapted.

A strongly measurable adapted process $U: [0,\infty)\times\Omega \to X$ is called a {\em weak
solution} of \eqref{eq:SCP} if, almost surely, its trajectories are locally Bochner integrable and
for all $t\in (0,\infty)$ and $x^*\in \Dom(A^*)$
almost surely one has
\begin{align}\label{eq:weaksol}
\lb U(t), x^*\rb + \int_0^t \lb U(s), A^*x^*\rb \, ds = \int_0^t G^* x^*\,
d W_H.
\end{align}
Note that $G^* x^* \in L^0_{\F}(\O;L^2(\R_+;H))$. As before, weak solutions are unique.

Let $G:\R_+\times\O\to H\otimes X$ be an adapted step process.
We claim that for all $t>0$ and all $p\in (0,\infty)$ the process
$$s\mapsto S(t-s) G(s)$$ defines an element $L^p_{\F}(\O;\g(0,t;H,X))$.
Indeed, fix $h\in H$, $x\in X$, and $0\leq a<b$.
Fixing an arbitrary $\varepsilon\in (0,\frac12 )$, we write
$S(s) (h\otimes x) = s^{\varepsilon}S(s) f(s)$, where
$f:(a,b)\to \calL(H,X)$ is given by $f(s) = s^{-\varepsilon} h\otimes x$.
By \cite[Example 2.18]{KuWe} $\{s^{\varepsilon} S(s): s\in (a,b)\}$
is $R$-bounded, and since $f\in \g(a,b;H,X)$, it follows from Proposition
\ref{prop:KW} that $s\mapsto S(s) (h\otimes x)\in \g(a,b;H,X)$. Now the
claim follows from an easy substitution argument and taking linear combinations.

In the setting just discussed, Proposition \ref{prop:Ito} implies that the random variable
\[S\diamond G(t)  :=  \int_0^t S(t-s) G(s)\, dW_H(s)\]
is well defined in $L^p(\O;X)$.

\begin{definition} \label{def:gmaxreg}
A sectorial operator $A$ of angle $< \pi/2$ has {\em stochastic maximal $\g$-regularity}
if there exist $p\in (0,\infty)$ and $C \ge 0$
such that for all adapted step processes $G:\R_+\times\O\to H\otimes \Dom(A^{\frac12 })$
we have $A^{\frac{1}{2}} S \diamond G\in L^p(\O;\g(\R_+;X))$ and
\begin{equation}\label{eq:maxgammareg}
 \| A^{\frac{1}{2}} S \diamond G\|_{L^p(\O;\g(\R_+;X))} \le C \| G\|_{ L^p(\O;\g(\R_+;H,X))}.
 \end{equation}
\end{definition}
Here, $A^{\frac{1}{2}} S \diamond G :=  S \diamond A^{\frac{1}{2}}G$ is well defined in view of the preceding discussion.
If $A$ has stochastic maximal $\g$-regularity, the mapping $G\mapsto A^{\frac{1}{2}} S \diamond G$
extends to a bounded linear operator from $L^p_{\F}(\O;\g(\R_+;H,X))$ to $L^p(\O;\g(\R_+;X))$.
As in the previous section, we will write $A^{\frac{1}{2}} S \diamond G$ for this extension general
and keep in mind that this notation is formal; the rigorous interpretation
is in terms of the just-mentioned bounded linear operator.

The above definition evidently depends on the parameter $p$.
In the next proposition, however, we show that, at least for UMD spaces $X$,
stochastic maximal $\g$-regularity is $p$-independent.

\begin{proposition}\label{prop:pindependence}
Let $X$ be a UMD Banach space. If $A$ has stochastic maximal $\g$-regularity,
then for all $q\in (0,\infty)$ there is a constant $C$ such that for all
adapted step processes $G:\R_+\times\O\to H\otimes \Dom(A^{\frac12 })$ one has
\[ \| A^{\frac{1}{2}} S \diamond G\|_{L^q(\O;\g(\R_+;X))} \le C\| G\|_{ L^q(\O;\g(\R_+;H,X))}.\]
\end{proposition}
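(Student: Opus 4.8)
The plan is to recognise the stochastic convolution $A^{\frac12}S\diamond G$, viewed as a function of the time variable, as a single stochastic integral with values in the Banach space $Y:=\g(\R_+;X)$, and then to exploit the fact that the It\^o isomorphism of Proposition \ref{prop:Ito} already delivers $p$-independence for genuine stochastic integrals. Since $\g(H,X)$ is isometric to a closed subspace of $L^2(\O;X)$ (the first Wiener chaos tensored with $X$), and closed subspaces of UMD spaces are UMD, the space $Y=\g(\R_+;X)$ is UMD whenever $X$ is. Thus Proposition \ref{prop:Ito} is available with $X$ replaced by $Y$.

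Concretely, for an adapted step process $G$ I would introduce the $\calL(H,Y)$-valued process $\Gamma_G$ with kernel
\[
 [\Gamma_G(s)h](t) := \one_{(s,\infty)}(t)\, S(t-s)A^{\frac12}G(s)h .
\]
The first step is to prove the identity $A^{\frac12}S\diamond G = \int_0^\infty \Gamma_G\,dW_H$ in $Y$; that is, the $Y$-valued stochastic integral of $\Gamma_G$ coincides, for each fixed output time $t$, with the scalar-in-time stochastic convolution $A^{\frac12}S\diamond G(t)$. I expect this stochastic-Fubini step to be the main obstacle: one must check that $\Gamma_G$ is adapted and defines an element of $\g(\R_+;H,Y)$, and that interchanging the $dW_H$-integration with the $\g(\R_+;X)$-structure in the output variable $t$ is legitimate. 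I would verify this first for elementary processes $G=\one_{(a,b]}\otimes(h\otimes x)$ with $x\in\Dom(A^{\frac12})$ — where both sides are explicit Wiener integrals with the same covariance — and then pass to general step processes by linearity, using the hypothesis to ensure the left-hand side lies in $L^p(\O;Y)$.

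Observe next that $G\mapsto \Gamma_G$ is a \emph{pathwise} deterministic linear map: $\Gamma_G(\cdot,\om)$ depends only on $G(\cdot,\om)$ through a fixed linear operator $\Theta_0$ on step functions in $\g(\R_+;H,\Dom(A^{\frac12}))$. The crucial point is that $\Theta_0$ is bounded from $\g(\R_+;H,X)$ into $\g(\R_+;H,Y)$. To see this I would test the identity above on a \emph{deterministic} step function $g$: then $\Gamma_g$ is deterministic, and the It\^o isomorphism in $Y$ at the exponent $p$ furnished by the hypothesis yields
\[
 \|\Gamma_g\|_{\g(\R_+;H,Y)}^p \eqsim_{p,Y} \E\|A^{\frac12}S\diamond g\|_Y^p \le C^p\,\|g\|_{\g(\R_+;H,X)}^p,
\]
the last inequality being Definition \ref{def:gmaxreg} (note $\E\|g\|^p=\|g\|^p$ since $g$ is deterministic). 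Hence $\|\Theta_0 g\|_{\g(\R_+;H,Y)}\lesssim \|g\|_{\g(\R_+;H,X)}$, and by density $\Theta_0$ extends to a bounded operator.

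Finally, fix $q\in(0,\infty)$ and an arbitrary adapted step process $G$. Applying the pathwise bound to $g=G(\om)$ gives $\|\Gamma_G(\om)\|_{\g(\R_+;H,Y)}\le C\|G(\om)\|_{\g(\R_+;H,X)}$ for a.e.\ $\om$, so $\Gamma_G\in L^q_\F(\O;\g(\R_+;H,Y))$. Invoking the It\^o isomorphism in $Y$ once more, now at the exponent $q$,
\[
 \E\|A^{\frac12}S\diamond G\|_Y^q \eqsim_{q,Y} \E\|\Gamma_G\|_{\g(\R_+;H,Y)}^q \le C^q\,\E\|G\|_{\g(\R_+;H,X)}^q,
\]
and taking $q$-th roots yields the asserted estimate with a constant depending on $q$, $X$ and $C$. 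This scheme isolates the $p$-dependence entirely inside the $p$-independent It\^o isomorphism, the convolution structure being absorbed once and for all into the deterministic bounded operator $\Theta_0$.
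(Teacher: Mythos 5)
Your proposal is correct and follows essentially the same route as the paper's own proof: both view $A^{\frac12}S\diamond G$ as a single stochastic integral with values in the UMD space $Y=\g(\R_+;X)$, derive a deterministic kernel estimate from the hypothesis at the given exponent $p$ via the Gaussian (Kahane--Khintchine) equivalence of moments, and then conclude for arbitrary $q$ by applying the It\^o isomorphism of Proposition \ref{prop:Ito} in $Y$ together with the deterministic estimate pathwise on $\O$. The only differences are bookkeeping: the paper routes the pathwise application through the $\g$-Fubini isomorphism of \cite[Proposition 2.6]{NVW1}, whereas your operator $\Theta_0$ keeps the kernel in $\g(\R_+;H,Y)$ throughout, and you make explicit the stochastic-Fubini identification $A^{\frac12}S\diamond G=\int_0^\infty\Gamma_G\,dW_H$ that the paper uses implicitly.
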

\begin{proof}
Let $G:\R_+\to H\otimes \Dom(A^{\frac12 })$ be a (deterministic) step function.
In that case, $A^{\frac{1}{2}} S \diamond G$ is a Gaussian random variable with values in $\g(\R_+;X)$.
By Proposition \ref{prop:Ito} applied to the UMD space $\g(\R_+;X)$ and the Kahane-Khintchine inequalities,
for all $t>0$ we have
\begin{equation}\label{eq:omegaind}
\begin{aligned}
\|t\mapsto [s\mapsto \one_{t>s} A^{\frac12} S(t-s) G(s)]\|_{\g(\R_+, dt;H,\g(\R_+,ds;X))}
& \eqsim_{X} \| A^{\frac{1}{2}} S \diamond G \|_{L^2(\O;\g(\R_+;X))}
\\ & \eqsim_{p,X} \| A^{\frac{1}{2}} S \diamond G\|_{L^p(\O;\g(\R_+;X))}
\\ & \lesssim\| G\|_{\g(\R_+;H,X)},
\end{aligned}
\end{equation}
using \eqref{eq:maxgammareg} in the last line; the exponent $p$ is as in Definition \ref{def:gmaxreg}.

Now let $G:\R_+\times \O\to H\otimes \Dom(A^{\frac12 })$ be an adapted step process and
let $q\in (0,\infty)$ be arbitrary.
By Proposition \ref{prop:Ito} applied to the UMD space $\g(\R_+;X)$ and the $\g$-Fubini isomorphism
\cite[Proposition 2.6]{NVW1},
\begin{align*}
\ & \| A^{\frac{1}{2}} S \diamond G\|_{L^q(\O;\g(\R_+;X))}
\\ & \qquad \lesssim_{q,X} \|t\mapsto [s\mapsto  \one_{t>s}
A^{\frac12} S(t-s) G(s)]\|_{\g(\R_+, dt;L^q(\O;\g(\R_+,ds;H,X)))}
\\ & \qquad \eqsim_{q} \|t\mapsto [s\mapsto  \one_{t>s}  A^{\frac12} S(t-s)
G(s)]\|_{L^q(\O;\g(\R_+, dt;\g(\R_+,ds;H,X)))}
\\ & \qquad \lesssim_{p,q,X} \| G\|_{L^q(\O;\g(\R_+;H,X))},
\end{align*}
where in the last step we used \eqref{eq:omegaind} pointwise on $\O$.
\end{proof}

In the next result we will provide sufficient conditions for
stochastic maximal $\g$-regularity under a functional calculus assumption on $A$.
The Banach space $X$ is required to be a UMD space with Pisier's property $(\alpha)$. This property is
equivalent to the assertion that for all non-zero Hilbert spaces $H_1$ and $H_2$,
the mapping $h_1\otimes (h_2\otimes x) \mapsto (h_1\otimes h_2)\otimes x$ induces an isomorphism
of Banach spaces (see \cite{KaWe,NWalpha})
\begin{equation}\label{eq:isomalpha}
\g(H_1,\g(H_2,X)) \simeq \g(H_1\otimes H_2,X).
\end{equation}
The spaces $X =L^q$ have property $(\alpha)$ for all $q\in [1, \infty)$.
If $X$ is isomorphic to a closed subspace of a Banach lattice,
then property $(\alpha)$ is equivalent with finite cotype \cite{Pialpha}.
In particular, every UMD Banach lattice
has property $(\a)$.

In the next theorem we combine Propositions \ref{prop:KW} and
\ref{prop:functionalcalcgammasect} to see that, under the conditions as stated in the theorem, the random variables
$U(t) := S\diamond G(t)$ are well defined in $L^p(\Omega;X)$ for all $t\ge 0$.

\begin{theorem}[Stochastic maximal $\g$-regularity]\label{thm:gammaStochasticEst}
Let $X$ be a UMD Banach space with property $(\alpha)$ and let $p\in (0,\infty)$.
If $A$ has a bounded $H^\infty$-calculus of angle
$<\pi/2$ on $X$, then $A$ has stochastic maximal $\g$-regularity.
Moreover, for all $G\in L^p_{\F}(\O;\g(\R_+;H,X))$ the stochastic
convolution process $U = S\diamond G$ satisfies:
\begin{enumerate}
\item[\rm(i)] (weak solution) $U$ is a weak solution of \eqref{eq:SCP}.
\end{enumerate}
If $0\in \varrho(A)$, then in addition we have:
\begin{enumerate}
\item[\rm(ii)] (space-time regularity) For all $\theta\in [0,\frac12)$, $U\in
L^p(\O;\g^{\theta}(\R_+;\Dom(A^{\frac12-\theta})))$ and
\begin{align*}
\|U\|_{L^p(\O;\g^{\theta}(\R_+;\Dom(A^{\frac12-\theta})))}\lesssim_{A,p,X,\theta}
\|G\|_{L^p(\O;\g(\R_+;H,X))},
\end{align*}
where $\lesssim$ can be replaced by $\eqsim$ if $p\in (1, \infty)$.
\item[\rm(iii)] (trace estimate) $U: \R_+\times \Omega \to X$ is pathwise continuous  and
\begin{align*}
\|U\|_{L^p(\O;BUC(\R_+;X))} \lesssim_{A,p,X} \|G\|_{L^p(\O;\g(\R_+;H,X))}.
\end{align*}
\end{enumerate}
\end{theorem}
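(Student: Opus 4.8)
The plan is to reduce the stochastic estimate \eqref{eq:maxgammareg} to a purely deterministic square function estimate and then to recognise the latter as a McIntosh square function estimate for an operator acting on a $\g$-space. First I would apply the It\^o isomorphism (Proposition \ref{prop:Ito}) to the UMD space $\g(\R_+;X)$, together with the Kahane--Khintchine inequalities and the $\g$-Fubini isomorphism, arguing exactly as in the proof of Proposition \ref{prop:pindependence}. This shows that it suffices to establish, for every deterministic step function $G:\R_+\to H\otimes\Dom(A^{\frac12})$, the estimate
\begin{equation}\label{eq:sketchmain}
\big\|t\mapsto[s\mapsto\one_{t>s}A^{\frac12}S(t-s)G(s)]\big\|_{\g(\R_+,dt;H,\g(\R_+,ds;X))}\lesssim\|G\|_{\g(\R_+;H,X)};
\end{equation}
the adapted case for arbitrary $p\in(0,\infty)$ then follows by applying \eqref{eq:sketchmain} pathwise and invoking the $\g$-Fubini isomorphism once more, precisely as in the second display of that proof.

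To prove \eqref{eq:sketchmain} I would exploit property $(\alpha)$. By the isomorphism \eqref{eq:isomalpha} the space on the left is isomorphic to $\g(\R_+^2;H,X)$; performing the measure-preserving change of variables $(t,s)\mapsto(t-s,s)=:(r,s)$ on $\{t>s\}$ turns the kernel into $(r,s)\mapsto A^{\frac12}S(r)G(s)$, and a second application of \eqref{eq:isomalpha}, now associating $H$ with the $s$-variable, identifies the ambient space with $\g(\R_+,dr;Y)$, where $Y:=\g(\R_+;H,X)$. Writing $\wt A$ for the pointwise extension of $A$ to $Y$ (acting on the $X$-values by post-composition), \eqref{eq:sketchmain} becomes
\begin{equation}\label{eq:sketchSF}
\big\|r\mapsto\wt A^{\frac12}\wt S(r)y\big\|_{\g(\R_+,dr;Y)}\lesssim\|y\|_Y,\qquad y:=G\in Y.
\end{equation}
The key point is that $\wt A$ inherits a bounded $H^\infty$-calculus of the same angle $<\pi/2$: by the ideal property (Proposition \ref{prop:ideal}) one has $f(\wt A)=f(A)\circ(\cdot)$ and hence $\|f(\wt A)\|_{\calL(Y)}=\|f(A)\|_{\calL(X)}\le M\|f\|_{H^\infty}$, while $Y$ inherits property $(\alpha)$, and therefore finite cotype, from $X$ by iterating \eqref{eq:isomalpha}. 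Since $\wt A^{\frac12}\wt S(r)=r^{-\frac12}\varphi(r\wt A)$ with $\varphi(z)=z^{\frac12}e^{-z}\in H_0^\infty(\Sigma_{\sigma'})$, the left-hand side of \eqref{eq:sketchSF} equals $\|r\mapsto\varphi(r\wt A)y\|_{\g(\R_+,\frac{dr}{r};Y)}$, and the required bound is exactly the upper square function estimate of Proposition \ref{prop:Hinftysquare} applied to $\wt A$ on $Y$. This yields maximal $\g$-regularity; note that $0\in\varrho(A)$ is not used here.

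For the remaining assertions I would argue as follows. For (i) it suffices, by continuity of both sides in $G\in L^p_{\F}(\O;\g(\R_+;H,X))$, to treat adapted step processes with values in $H\otimes\Dom(A)$; the weak formulation \eqref{eq:weaksol} then follows from the identity $S(t-s)\s x\s+\int_s^t S(u-s)\s A\s x\s\,du=x\s$ together with the stochastic Fubini theorem, and the general case by approximation via Proposition \ref{prop:Ito}. For (ii) I would rerun the reduction above with the extra factor $D^\theta A^{\frac12-\theta}$ in place of $A^{\frac12}$. The lower-order contribution $\|A^{\frac12-\theta}U\|_{\g(\R_+;X)}$ is controlled directly by the main estimate, since $A^{-\theta}$ is bounded when $0\in\varrho(A)$; for the term $\|A^{\frac12-\theta}D^\theta U\|_{\g(\R_+;X)}$ the Fourier characterisation of Proposition \ref{prop:equivgammas} together with the Laplace identity $\int_0^\infty e^{-ir\xi}\wt S(r)\,dr=(i\xi+\wt A)^{-1}$ reduces matters, after the same change of variables and a scaling argument, to the square function bound for $\eta(z)=z^{\frac12-\theta}(i+z)^{-1}$, which lies in $H_0^\infty(\Sigma_{\sigma'})$ precisely because $0\le\theta<\frac12$; Proposition \ref{prop:Hinftysquare} again applies. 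The two-sided estimate for $p\in(1,\infty)$ follows from the two-sided It\^o isomorphism and the reverse square function bound in Proposition \ref{prop:Hinftysquare}.

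The hard part will be (iii), the pathwise continuity with the $BUC$-estimate: the naive route through the $\g$-Besov embedding of Proposition \ref{prop:gammsobolev} just fails to reach continuous $X$-valued functions for $\theta<\frac12$. I would instead use the factorisation method. Fixing $\alpha\in(0,\frac12)$ and writing $U=c_\alpha R_\alpha Z$ with $Z(\sigma)=\int_0^\sigma(\sigma-s)^{-\alpha}S(\sigma-s)G(s)\,dW_H(s)$ and $R_\alpha f(t)=\int_0^t(t-\sigma)^{\alpha-1}S(t-\sigma)f(\sigma)\,d\sigma$, one checks, via the It\^o isomorphism and a Hardy-type bound for the singular kernel (requiring $\alpha<\frac12$), that $Z$ has sufficient space-time integrability, and that the deterministic operator $R_\alpha$ maps this space boundedly into $BUC(\R_+;X)$ (requiring $\alpha$ not too small), the exponential stability from $0\in\varrho(A)$ ensuring the global-in-time statement. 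Combining the two gives the claim. I expect this continuity step, rather than the core square function estimate \eqref{eq:sketchSF}, to require the most technical care, since it sits at the endpoint of the available integrability range.
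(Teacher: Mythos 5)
Your treatment of the core estimate and of parts (i)--(ii) is correct and is, in substance, the paper's own proof: the paper likewise reduces to a pathwise deterministic bound by applying Proposition \ref{prop:Ito} to the UMD space $\g(\R_+;X)$ together with the $\g$-Fubini isomorphism (exactly as in Proposition \ref{prop:pindependence}), uses property $(\alpha)$ through \eqref{eq:isomalpha} to flatten the iterated $\g$-spaces, and concludes with the square function estimate of Proposition \ref{prop:Hinftysquare}. The only difference is cosmetic: at $\theta=0$ you change variables $(t,s)\mapsto(t-s,s)$ and use $\varphi(z)=z^{1/2}e^{-z}$, whereas the paper takes Fourier transforms in $t$ and applies Proposition \ref{prop:Hinftysquare} with $\varphi(z)=z^{\frac12-\theta}(i+z)^{-1}$ uniformly for all $\theta\in[0,\frac12)$ --- which is precisely the route you yourself follow for part (ii), including the two-sidedness claim for $p\in(1,\infty)$.

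Part (iii), however, contains a genuine gap, and it is not a matter of ``technical care at the endpoint'': the factorisation method cannot prove this trace estimate, which is why the paper does not attempt it and instead quotes \cite[Theorem 4.2]{VW11} (proved there by a different idea, namely a dilation of $S$, available through the $H^\infty$-calculus on a UMD space, which reduces the maximal estimate to Doob's inequality for a martingale). The obstruction is that $G$ has only $\g$-integrability in time (for $X$ a Hilbert space this is plain $L^2$-integrability), and then your two requirements on $\alpha$ are incompatible rather than tight. First, the kernel $s\mapsto(\sigma-s)^{-\alpha}S(\sigma-s)G(s)$ is not controlled pointwise in $\sigma$: multiplication by the unbounded scalar $(\sigma-s)^{-\alpha}$ is not a $\g$-multiplier, and indeed the bound $\sup_\sigma \E\|Z(\sigma)\|^2 \lesssim \|G\|^2_{\g(\R_+;H,X)}$ is false for every $\alpha>0$. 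Already for $X=H=\R$, $A=1$ and $g=\e^{-1/2}\one_{(\sigma-\e,\sigma)}$ one has $\|g\|_{L^2(\R_+)}=1$ while
\begin{equation}
\E|Z(\sigma)|^2=\int_0^\sigma(\sigma-s)^{-2\alpha}e^{-2(\sigma-s)}|g(s)|^2\,ds\ \gtrsim\ \e^{-2\alpha}\to\infty \quad (\e\downarrow 0).
\end{equation}
So only integrated bounds are available: Young's inequality with the kernel $r^{-2\alpha}e^{-2\omega r}$ gives $Z\in L^p(\O;L^q(0,T;X))$ only under the constraint $\alpha<1/q$, whereas $R_\alpha$ maps $L^q(0,T;X)$ into continuous functions only when $\alpha>1/q$ (and, via the duality estimate \eqref{eq:L2est}, maps $\g(0,T;X)$ into bounded functions only when $\alpha>\frac12$, which is the case $q=2$). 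For every choice of $q$ the window $1/q<\alpha<1/q$ is empty: with square-function data the maximal estimate is an endpoint result that factorisation cannot reach. A secondary point: even in regimes where factorisation does apply it yields moments only for large $p$ (one needs $\alpha>1/p$), so the claim for all $p\in(0,\infty)$ would in any case require a Lenglart-type extrapolation, which you do not mention. To complete (iii) you should either invoke \cite[Theorem 4.2]{VW11}, as the paper does, or reproduce its dilation argument.
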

\begin{proof}
First we prove that for all $G\in L_\F^p(\O;\g(\R_+;H,X))$
we have
$D^{\theta}A^{\frac12-\theta} U\in L^p(\O;\g(\R_+;X))$ and
\begin{align}\label{eq:gammaStochasticEst}
\|D^{\theta}A^{\frac12-\theta} U\|_{L^p(\O;\g(\R_+;X))}\eqsim_{A,p,X,\theta}
\|G\|_{L^p(\O;\g(\R_+;H,X))}.
\end{align}

First let  $G:\R_+\times\O\to H\otimes \Dom(A^{\frac12 })$
be an adapted step process. By Proposition \ref{prop:Ito} applied to the UMD
space $\g(\R_+;X)$ and the $\g$-Fubini theorem (see the proof of Proposition \ref{prop:pindependence})
one has
\begin{equation}\label{eq:Itogamma}
\begin{aligned}
\ & \|D^{\theta} A^{\frac12-\theta} U\|_{L^p(\O;\g(\R_+;X))}
\\ & \quad \lesssim_{p,X}
\|t \mapsto [s\mapsto \one_{t>s} D_t^{\theta} A^{\frac12-\theta} S(t-s) G(s)]\|_{L^p(\O;\g(\R_+, dt;\g(\R_+,
ds;H,X)))},
\end{aligned}
\end{equation}
where $\lesssim$ can be replaced by $\eqsim$ if $p\in (1, \infty)$.

Pathwise we can estimate
\begin{align*}
\|t \mapsto [s\mapsto \one_{t>s} D_t^{\theta} & A^{\frac12-\theta} S(t-s) G(s)]\|_{\g(\R_+, dt;\g(\R_+,
ds;H,X))} \\ & \stackrel{\rm(a)}{=}
\|(i\lambda)^\theta A^{\frac12-\theta} e^{i\lambda s} (\lambda i + A)^{-1}
G(s)\|_{\g(\R, d\lambda;H,\g(\R_+, ds ;X))}
\\ & \stackrel{\rm(b)}{\eqsim} \|(i\lambda)^\theta A^{\frac12-\theta} e^{i\lambda
s} (\lambda i + A)^{-1} G(s)\|_{\g(\R_+\times\R, ds\times d\lambda ;H, X))}
\\ & \stackrel{\rm(c)}{=}
\|(i\lambda)^\theta A^{\frac12-\theta} (\lambda i + A)^{-1}
G(s)\|_{\g(\R_+\times\R;H ds\times d\lambda ;X))}
\\ & \stackrel{\rm(b)}{\eqsim} \|\lambda^\theta A^{\frac12-\theta} (\lambda i +
A)^{-1} G(s)\|_{\g(\R, d\lambda;\g(\R_+, ds ; H,X))}
\\ & \stackrel{\rm(d)}{=} \|z^{\frac12-\theta} A^{\frac12-\theta} (i + zA)^{-1}
G(s)\|_{\g(\R, d\frac{dz}{z} ;\g(\R_+, ds ;H,X))}
\\ & \stackrel{\rm(e)}{\eqsim} \|G\|_{\g(\R_+;H,X)}
\end{align*}
Here (a) follows by taking Fourier transforms and using \eqref{eq:Fisometry}, (b) follows from
\eqref{eq:isomalpha}, (c) follows from the right ideal
property and the identity $|i^{\theta} e^{is\lambda}|=1$, (d) follows by
simple rewriting and substitution $z = 1/\lambda$, and  (e) follows from Proposition \ref{prop:Hinftysquare} applied with
$\varphi(z)= z^{\frac12-\theta} (i +z)^{-1}$.
Combining the pathwise estimate with \eqref{eq:Itogamma} gives
\eqref{eq:gammaStochasticEst} for adapted step processes $G$. The general case follows from this by approximation.

(i): Stochastic maximal $\g$-regularity is obtained by taking $\theta=0$ in the above.
For adapted step processes $G$ with values in $H\otimes \Dom(A^{\frac12 })$,
the validity of the weak identity \eqref{eq:weaksol} is well known (cf.\ \cite{DPZ}).
The general case follows by approximation (cf. the proof of Theorem \ref{thm:gammaDeterministicEst}(i)).

(ii): First let $G:\R_+\times\O\to H\otimes \Dom(A^{\frac12 })$ be an adapted step process.
By \eqref{eq:gammaStochasticEst} applied with $\theta=0$ one sees that
$A^{\frac{1}{2}} U\in \g(\R_+;X)$ almost surely. Since $0\in \varrho(A)$, this implies that
$U\in \g(\R_+;\Dom(A^{\frac{1}{2}}))$ almost surely. This proves the result for $\theta=0$.
Moreover, $U\in \g(\R_+;\Dom(A^{\frac12-\theta}))$ for all $\theta\in
(0,\frac12)$
as well. Now the result follows from \eqref{eq:gammaDeterministicEstlem} and
Proposition \ref{prop:equivgammas}.

For general $G\in L_\F^p(\Omega;\g(\R_+;H,X))$ the result follows by approximation.

(iii): This follows follows from \cite[Theorem 4.2]{VW11}.
\end{proof}

\begin{corollary}\label{cor:maximregL^2}
Under the conditions of Theorem \ref{thm:gammaStochasticEst} one can replace (ii) by
\begin{enumerate}
\item[\rm(ii)$'$] (space-time regularity) For all $\theta\in [0,\frac12)$, $U\in
L^p(\O;H^{\theta, 2}(\R_+;\Dom(A^{\frac12-\theta})))$ and
\begin{align*}
\|U\|_{L^p(\O;H^{\theta,2}(\R_+;\Dom(A^{\frac12-\theta})))}\lesssim_{A,p,X,\theta}
\|G\|_{L^p(\O;\g(\R_+;H,X))},
\end{align*}
\end{enumerate}
\end{corollary}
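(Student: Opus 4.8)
The plan is to deduce (ii)$'$ directly from the $\g$-regularity already established in Theorem~\ref{thm:gammaStochasticEst}(ii), by inserting the improved Bessel-potential embedding of Remark~\ref{rem:improvedembedding}(ii) and applying it pathwise in $\omega$. Fix $\theta\in[0,\frac12)$ and set $Y:=\Dom(A^{\frac12-\theta})$. Since $0\in\varrho(A)$, the fractional power $A^{\frac12-\theta}$ is an isomorphism of $Y$ onto $X$, so $Y$ is isomorphic to $X$; in particular $Y$ has cotype $2$ as soon as $X$ does. Under this cotype assumption Remark~\ref{rem:improvedembedding}(ii), applied with $\mathcal{O}=\R_+$, smoothness $s=\theta$ and coefficient space $Y$, furnishes a continuous embedding
\[\g^{\theta}(\R_+;Y)\hookrightarrow H^{\theta,2}(\R_+;Y).\]

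To carry this out I would first note that the embedding acts trajectory-wise with an $\omega$-independent norm: by Theorem~\ref{thm:gammaStochasticEst}(ii) we have $U(\omega)\in\g^{\theta}(\R_+;Y)$ for almost every $\omega$, whence $\|U(\omega)\|_{H^{\theta,2}(\R_+;Y)}\lesssim\|U(\omega)\|_{\g^{\theta}(\R_+;Y)}$. Taking $p$-th powers and integrating over $\O$ gives
\[\|U\|_{L^p(\O;H^{\theta,2}(\R_+;Y))}\lesssim\|U\|_{L^p(\O;\g^{\theta}(\R_+;Y))}\lesssim_{A,p,X,\theta}\|G\|_{L^p(\O;\g(\R_+;H,X))},\]
the last estimate being exactly Theorem~\ref{thm:gammaStochasticEst}(ii). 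Since the right-hand side is finite for every $G\in L^p_{\F}(\O;\g(\R_+;H,X))$, this simultaneously yields membership $U\in L^p(\O;H^{\theta,2}(\R_+;\Dom(A^{\frac12-\theta})))$ and the asserted bound, so no separate density argument beyond the one already used for (ii) is required.

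The step I expect to be the real crux is the embedding $\g^{\theta}(\R_+;Y)\hookrightarrow H^{\theta,2}(\R_+;Y)$ itself, since this is where \emph{cotype $2$} genuinely enters: the integrability exponent $2$ in the Bessel scale is exactly what the cotype-$2$ refinement in Remark~\ref{rem:improvedembedding}(ii) provides, whereas for a space of finite cotype $q>2$ one only obtains $\g^{\theta}(\R_+;Y)\hookrightarrow H^{\theta+\frac1q-\frac12,q}(\R_+;Y)$, with both a loss of smoothness and the worse exponent $q$. Thus (ii)$'$ is the natural companion of (ii) precisely in the cotype-$2$ regime, which covers the main application $X=L^q(\mu)$ with $1<q\le2$. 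I note that a direct stochastic derivation — applying the It\^o isomorphism of Proposition~\ref{prop:Ito} to the UMD space $Z=H^{\theta,2}(\R_+;Y)$ and then estimating the resulting deterministic convolution kernel as in the proof of Theorem~\ref{thm:gammaStochasticEst} — meets the same obstruction: after a $\g$-Fubini step it reduces to an $L^2(\R,d\lambda)$-valued square function estimate for $A$ on $\g(\R_+;H,X)$, which is again available only under cotype $2$. The embedding route is therefore the more economical one.
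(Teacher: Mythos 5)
Your argument is correct and coincides with the paper's intended derivation: the corollary is stated without proof and is meant to follow by exactly your route, namely Theorem \ref{thm:gammaStochasticEst}(ii) combined with the cotype-$2$ embedding $\g^{\theta}(\R_+;Y)\hookrightarrow H^{\theta,2}(\R_+;Y)$ of Remark \ref{rem:improvedembedding}(ii) with $Y=\Dom(A^{\frac12-\theta})$, applied trajectory-wise (the embedding constant being independent of $\omega$) and followed by taking $p$-th moments. You have also correctly isolated the one genuine subtlety: this embedding, and hence the corollary, requires cotype $2$ of $X$ (equivalently of $Y$, which is isomorphic to $X$ since $0\in\varrho(A)$) --- a hypothesis not visible in the literal phrase ``under the conditions of Theorem \ref{thm:gammaStochasticEst}'' but plainly intended by the authors, who advertise this corollary in the introduction as the stochastic maximal regularity result for $L^q(\mu)$ with $1<q<2$, i.e.\ precisely the cotype-$2$ range, and whose machinery for $q>2$ yields only the weaker embedding into $H^{\theta+\frac1q-\frac12,q}$ that you mention.
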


\begin{remark}\label{rem:suffXA} If $X$ is a UMD Banach space and $A$
has a bounded $H^\infty$-calculus of angle $\pi/2$ and $0\in \varrho(A)$, then $A$ is $\g$-sectorial
by Proposition \ref{prop:functionalcalcgammasect}.
\end{remark}

\begin{remark}
The results of \cite{CoxVer2} imply that an upper estimate in \eqref{eq:NVW} still holds if the UMD property is replaced by the so-called decoupling property. Examples of Banach spaces with the decoupling property are the  UMD spaces and Banach spaces
isomorphic to a closed subspace of a space $L^1(\mu)$. One can check that Proposition \ref{prop:pindependence} and
Theorem \ref{thm:gammaStochasticEst} remain true for this class of spaces, the only difference being that
in Theorem \ref{thm:gammaStochasticEst} (ii) one cannot
replace $\lesssim$ by $\eqsim$ for $p\in (1, \infty)$.
\end{remark}

\section{Applications to (stochastic) evolution equations}\label{sec:see}

In this section we prove a $\gamma$-maximal regularity result for semilinear evolution
equations in a Banach space X of the form
\begin{equation}\tag{EE}\label{EE}
\left\{\begin{aligned}
U'(t)  +  A U(t)& = [F(t,U(t)) + f(t)], \qquad t\in
[0,T],\\
 U(0) & = u_0,
\end{aligned}
\right.
\end{equation}
and semilinear stochastic evolution
equations in $X$ of the form
\begin{equation}\tag{SEE}\label{SEE}
\left\{\begin{aligned}
dU(t)  +  A U(t)\, dt& = [F(t,U(t)) + f(t)] \,dt \\ & \qquad \qquad + [B(t,U(t)) + b(t)]\,dW_H(t), \qquad t\in
[0,T],\\
 U(0) & = u_0,
\end{aligned}
\right.
\end{equation}
where $A$ is $\g$-sectorial of angle $<\pi/2$ and $F$ and $G$ are nonlinearities satisfying suitable Lipschitz
and linear growth assumptions specified below. The initial value $u_0$ takes values in a suitable trace space
($X$ in the deterministic case, $X_\frac12$ in the stochastic case).

Evidently, \eqref{EE} is a special case of \eqref{SEE} by taking $B \equiv 0$ and $b\equiv 0$
and taking $u_0$ deterministic. For this reason we shall
discuss the stochastic case in detail, and leave the deterministic case as a simplification that the reader
may easily extract. In order to handle the stochastic term we shall always assume that $X$ be a UMD space,
but examination of the arguments shows that the deterministic case holds true for any Banach space $X$.

\subsection{Assumptions}

The assumptions are essentially the same as in  \cite{NVW11eq}, except that Lipschitz conditions
are now formulated in the corresponding $\g$-spaces.

\medskip\noindent
{\bf Hypothesis (H).}

\let\theenumi\ALTERWERTA
\let\labelenumi\ALTERWERTB
\let\ALTERWERTA\theenumi
\let\ALTERWERTB\labelenumi
\def\theenumi{{\rm (HA)}}
\def\labelenumi{(HA)}
\begin{enumerate}
\item\label{as:A}
There exists $w\in\R$ such that the operator $w+A$,
viewed as a densely defined operator on $X$ with domain $X_1:=\Dom(A)$,
has a bounded $H^\infty$-calculus on $X$ of angle
$0< \sigma<\frac12\pi$.
In what follows, for $\alpha\in (0,1)$ we write
$X_{\alpha} = [X,\Dom(A)]_{\alpha}$ for the complex interpolation space.
\let\theenumi\ALTERWERTA
\let\labelenumi\ALTERWERTB
\end{enumerate}

If \ref{as:A} holds for some $w\in\R$, then it holds for
any $w'>w.$ Furthermore, we may write $$-A + F = -(A +w') + (F + w'),$$
and note that a function $F$ satisfies the condition \ref{as:LipschitzF} below if and only if
$F+w'$ does. Thus, in what follows we may replace $A$ and $F$ by $A+w'$ and $F+w'$ and thereby
assume, without any loss of generality, that the operator $A$ is invertible.

Note that by Hypothesis \ref{as:A}, $X_{\alpha} = \Dom(A^{\alpha})$ for all $\alpha\in (0,1)$
(see \cite[Theorem 6.6.9]{Haase:2}).

\let\ALTERWERTA\theenumi
\let\ALTERWERTB\labelenumi
\def\theenumi{{\rm (HF)}}
\def\labelenumi{(HF)}
\begin{enumerate}
\item\label{as:LipschitzF}
The function $f:[0,T]\times\O\to X$ is adapted and strongly measurable and $f\in \g(0,T;X)$ almost surely.
The function $F:[0,T]\times\O\times X_1\to X$ is strongly
measurable and
\begin{enumerate}
\item for all $t\in [0,T]$ and $x\in X_1$ the random variable $\omega\mapsto F(t,\omega,
x)$ is strongly
$\F_t$-measurable;
\item there exist constants
$L_{F}$, $\tilde L_{F}$, $C_{F}$ such that for all
$\omega\in \O$, and $\phi_1, \phi_2\in \g(0,T;X_1)$,
\begin{align*}
\phantom{aaaaaaaaaaa}
\|F(\cdot,\omega, \phi_1) & - F(\cdot,\omega,\phi_2)\|_{\g(0,T;X)} \\ & \leq L_{F} \|\phi_1-\phi_2\|_{\g(0,T;X_1)} +
\tilde L_{F} \|\phi_1-\phi_2\|_{\g(0,T;X)}
\end{align*}
and
\begin{equation*}
\phantom{aaaaaaaa}
\|F(\cdot,\omega, \phi_1)\|_{\g(0,T;X)} \leq C_{F}(1+ \|\phi_1\|_{\g(0,T;X_1)}).
\end{equation*}
\end{enumerate}
\end{enumerate}
\let\theenumi\ALTERWERTA
\let\labelenumi\ALTERWERTB

\let\ALTERWERTA\theenumi
\let\ALTERWERTB\labelenumi
\def\theenumi{{\rm (HB)}}
\def\labelenumi{(HB)}
\begin{enumerate}
\item\label{as:LipschitzB}
The function $b:[0,T]\times\O\to \g(H,X_{\frac12})$ is adapted and strongly measurable
and $b\in \g(0,T;H,X_{\frac12})$ almost surely.
The function $B:[0,T]\times\O\times X_1\to \g(H,X_{\frac12})$
is strongly measurable and
\begin{enumerate}
\item for all $t\in [0,T]$ and $x\in X_1$ the random variable  $\omega\mapsto
B(t,\omega, x)$ is strongly $\F_t$-measurable;
\item there exist constants $L_{B}$, $\tilde L_{B}$, $C_{B}$ such that for all $t\in
[0,T]$, $\omega\in \O$, and $\phi_1,\phi_2\in\g(0,T;X_1)$,
\begin{align*}
\phantom{aaaaaaaaaaa}
\|B(\cdot,\omega, \phi_1) & - B(\cdot,\omega,\phi_2)\|_{\g(0,T;H,X_{\frac12})}
\\ & \leq L_{B} \|\phi_1-\phi_2\|_{\g(0,T;X_1)} +
\tilde L_{B} \|\phi_1-\phi_2\|_{\g(0,T;X)}
\end{align*}
and
\begin{equation*}
\phantom{aaaaaaaa}
\|B(\cdot,\omega, \phi_1)\|_{\g(0,T;H,X_{\frac12})} \leq C_{B}(1+ \|\phi_1\|_{\g(0,T;X_1)}).
\end{equation*}
\end{enumerate}
\end{enumerate}
\let\theenumi\ALTERWERTA
\let\labelenumi\ALTERWERTB

\let\ALTERWERTA\theenumi
\let\ALTERWERTB\labelenumi
\def\theenumi{{\rm (H$u_0$)}}
\def\labelenumi{(H$u_0$)}
\begin{enumerate}
\item \label{as:initial_value}
The initial value $u_0:\O\to X_{\frac12}$ is strongly $\F_0$-measurable.
\end{enumerate}
\let\theenumi\ALTERWERTA
\let\labelenumi\ALTERWERTB

The reader might have noticed that there is some redundancy in the conditions
\ref{as:LipschitzF} and \ref{as:LipschitzB} when we introduce the constants $L_F$ and $\tilde L_F$,
and $L_B$ and $\tilde L_B$, separately. The point here is that later on we shall impose a smallness condition
on the constants $L_F$ and $L_B$, but not on $\tilde L_F$ and $\tilde L_B$ which are allowed to be arbitrarily large.

\subsection{Solutions}

Throughout this subsection we assume that $X$ is a UMD Banach space and that {\rm (H)} is satisfied.
Observe that by Proposition \ref{prop:functionalcalcgammasect}, $w+A$ is $\g$-sectorial.

\begin{definition}\label{def:strongsol}
A process $U: [0,T]\times\Omega \to X$ is called a
{\em strong $\g$-solution} of \eqref{SEE} if it is strongly measurable and adapted,
and
\begin{enumerate}[(i)]
\item almost surely, $U\in \g(0,T;X_1)$;

\item for all $t\in [0,T]$, almost surely the following identity holds in $X$:
\begin{align*}
\phantom{aaaaaa}
U(t) + \int_0^t A U(s) \, ds = u_0 & + \int_0^t [F(s,U(s)) + f(s)] \, ds \\ & + \int_0^t
[B(s,U(s)) +b(s)] \, d W_H(s).
\end{align*}
\end{enumerate}
\end{definition}
Here the integrals are not Bochner integrals in general, but defined as in \eqref{eq:def-int}.
To see that the integrals are well defined, we note that, by \ref{as:A}, $A U\in \g(0,T;X)$ is strongly measurable and satisfies
\[\|A U\|_{\g(0,T;X)} \leq \|A\|_{\calL(X_1,X)} \|U\|_{\g(0,T;X_1)}\]
almost surely. Similarly, by \ref{as:LipschitzF} and
\ref{as:LipschitzB}, $F(\cdot,U(\cdot))$ and $ f$ belong to $\g(0,T;X)$ and $B(\cdot,U(\cdot))$ and $b$
belong to $\g(0,T;H,X_{\frac12})$ almost surely. The two deterministic integrals can now be
 interpreted almost surely in the sense of \eqref{eq:def-int}. For example, we interpret
\[\int_0^t A U(s) \, ds := (A U)(\one_{(0,t)}).\]
The stochastic integral is well defined in
$X_{\frac12}$ (and hence in $X$) by Proposition \ref{prop:Ito},
observing that $X_{\frac12}$ is a UMD space.

By Definition \ref{def:strongsol}, a strong
solution always has a version with continuous paths in $X$ such that,
almost surely, the identity in (ii) holds for all $t\in [0,T]$. Indeed, define
$\tilde{U}:[0,T]\times\O\to X$ by
\begin{align*}
\tilde{U}(t) := -\int_0^t A U(s) \, ds +  u_0 & + \int_0^t \big[F(s,U(s)) +f(s)\big]\, ds
+ \int_0^t \big[B(s,U(s)) + b(s) \big]\, d W_H(s),
\end{align*}
where we take continuous versions of the integrals on the right-hand side. From
the definitions of $U$ and $\tilde U$ one obtains, for all $t\in [0,T]$,
that $U(t) =
\tilde{U}(t)$ almost surely in $X$.
Therefore, almost surely, for all $t\in [0,T]$ one has
\begin{align*}
\tilde{U}(t) +\int_0^t A \tilde{U}(s) \, ds =  u_0 & + \int_0^t \big[F(s,\tilde{U}(s)) + f(s) \big]
\, ds + \int_0^t \big[B(s,\tilde{U}(s)) + b(s)\big] \, d W_H(s).
\end{align*}
From now on we choose this version whenever this is convenient.
We will actually prove much stronger regularity properties in Theorem
\ref{thm:SE} below.

\begin{definition}\label{def:mildsol}
A process $U: [0,T]\times\Omega \to X$ is called a
{\em mild $\g$-solution} of \eqref{SEE} if
it is strongly measurable and adapted, and
\begin{enumerate}[(i)]
\item almost surely, $U\in \g(0,T;X_1)$;

\item  for all $t\in [0,T]$, almost surely
the following identity holds in $X$:
\begin{align*}
U(t) = S(t) u_0 & + \int_0^t S(t-s)[ F(s,U(s)) + f(s)]\,ds
+ \int_0^t S(t-s)[B(s,U(s))+ b(s)]\,dW_H(s).
\end{align*}
\end{enumerate}
\end{definition}

The convolutions with $F(\cdot,U(\cdot))$ and $f$ are well defined as $X$-valued
processes by \ref{as:LipschitzF}. The stochastic convolutions with $B(\cdot,U(\cdot))$ and $b$ are
well defined as $X_{\frac12}$-valued processes
(and hence as an $X$-valued process) by \ref{as:LipschitzB}, the fact that $X_{\frac12}$
is a UMD space and Proposition \ref{prop:Ito}.

The following type of result is well known and the proofs extend to our situation (cf. \cite{DPZ, NVW11eq}).

\begin{proposition}\label{prop:strongmild}
Let $X$ be a UMD Banach space and let {\rm (H)} be satisfied.
A process $U:[0,T]\times\Omega\to X$ is a strong solution
of \eqref{SEE} if and only if it is a mild solution of \eqref{SEE}.
\end{proposition}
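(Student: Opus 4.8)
The plan is to prove the two implications separately. Throughout, abbreviate the drift and diffusion coefficients by setting $\Phi(s) := F(s,U(s)) + f(s)$ and $\Psi(s) := B(s,U(s)) + b(s)$; by \ref{as:LipschitzF} and \ref{as:LipschitzB} we have, almost surely, $\Phi \in \g(0,T;X)$ and $\Psi \in \g(0,T;H,X_{\frac12})$, so that all convolutions and integrals below are well defined in the sense of \eqref{eq:def-int} and of Proposition \ref{prop:Ito}. Since $U \in \g(0,T;X_1)$ almost surely and $A \in \calL(X_1,X)$, the ideal property (Proposition \ref{prop:ideal}) gives $AU \in \g(0,T;X)$ almost surely, so the term $\int_0^t AU(s)\,ds := (AU)(\one_{(0,t)})$ appearing in the strong formulation is meaningful.

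\textbf{Mild $\Rightarrow$ Strong.} Starting from the mild identity, I would apply $A$ and integrate over $[0,t]$, treating the three terms separately. For the semigroup term, analyticity of $S$ together with $u_0 \in X_{\frac12}$ gives $\int_0^t AS(s)u_0\,ds = u_0 - S(t)u_0$. For the deterministic convolution, a Fubini argument together with $\int_r^t AS(s-r)\,ds = I - S(t-r)$ yields $\int_0^t A\big(\int_0^s S(s-r)\Phi(r)\,dr\big)\,ds = \int_0^t \Phi(r)\,dr - \int_0^t S(t-r)\Phi(r)\,dr$. For the stochastic convolution, the stochastic Fubini theorem (valid in the UMD setting via Proposition \ref{prop:Ito}) gives the analogue $\int_0^t A\big(\int_0^s S(s-r)\Psi(r)\,dW_H(r)\big)\,ds = \int_0^t \Psi(r)\,dW_H(r) - \int_0^t S(t-r)\Psi(r)\,dW_H(r)$. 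Adding the three contributions and comparing with the mild formula for $U(t)$ produces exactly the strong identity $U(t) + \int_0^t AU(s)\,ds = u_0 + \int_0^t \Phi(s)\,ds + \int_0^t \Psi(s)\,dW_H(s)$.

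\textbf{Strong $\Rightarrow$ Mild.} Fix $t \in [0,T]$ and apply the (deterministic-coefficient) It\^o product rule to the process $s \mapsto S(t-s)U(s)$ on $[0,t]$. Using $\frac{d}{ds}S(t-s) = AS(t-s)$ and the strong equation in differential form, $dU(s) = \big(-AU(s) + \Phi(s)\big)\,ds + \Psi(s)\,dW_H(s)$, one obtains $d\big(S(t-s)U(s)\big) = \big(AS(t-s)U(s) - S(t-s)AU(s)\big)\,ds + S(t-s)\Phi(s)\,ds + S(t-s)\Psi(s)\,dW_H(s)$. The bracketed drift vanishes because $A$ commutes with $S(t-s)$. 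Integrating from $0$ to $t$ and using $S(0) = I$ and $U(0) = u_0$ gives the mild identity $U(t) = S(t)u_0 + \int_0^t S(t-s)\Phi(s)\,ds + \int_0^t S(t-s)\Psi(s)\,dW_H(s)$. Equivalently, both directions can be routed through the weak formulation \eqref{eq:weaksol}: a strong solution is weak by pairing with $x^* \in \Dom(A^*)$, a mild solution is weak by the same stochastic Fubini computation, and weak solutions are unique.

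\textbf{Main obstacle.} The genuine difficulty is that every integral here is interpreted in the $\g$-sense of \eqref{eq:def-int} rather than as a pathwise Bochner integral, so the deterministic and stochastic Fubini theorems and the It\^o product rule must be justified at the level of $\g$-radonifying operators. In particular, the cancellation $AS(t-s)U(s) = S(t-s)AU(s)$ is delicate: membership $U \in \g(0,T;X_1)$ does not assert $U(s) \in \Dom(A)$ for almost every $s$, so the identity has to be read through the ideal property and the commutation of $A$ with $S(t-s)$, lifted to the $\g$-level. These points carry over verbatim from \cite{DPZ, NVW11eq} once Proposition \ref{prop:Ito} is available, which is why the statement follows.
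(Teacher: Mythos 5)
Your proposal is correct and is essentially the proof the paper has in mind: the paper's own ``proof'' consists of the single remark that this type of result is well known and that the arguments of \cite{DPZ, NVW11eq} extend to the present situation, and your outline (deterministic and stochastic Fubini for the mild-to-strong direction; the It\^o product rule or, more robustly, the detour through the weak formulation \eqref{eq:weaksol} and uniqueness of weak solutions for the converse; everything interpreted at the level of $\g$-spaces via \eqref{eq:def-int} and Proposition \ref{prop:Ito}) is exactly that standard argument adapted to the $\g$-setting. Your identification of the $\g$-interpretation of the integrals as the genuine technical point, handled by proving the identities for (adapted) step processes and extending by density, is precisely what the paper's citation tacitly delegates to the references.
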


\subsection{Well-posedness}

The main result of this section is the following maximal $\g$-regularity result.

\begin{theorem}\label{thm:SE}
Let $X$ be a UMD Banach space with property $(\alpha)$ and
let {\rm (H)} be satisfied. Let $p\in (0,\infty)$ be given and assume that $f\in L^p_{\F}(\O;\g(0,T;X))$
and $b\in L^p_{\F}(\O;\g(0,T;H,X_{\frac12}))$.
There exists  a constant $\delta>0$, depending only on $A$, $p$, $T$, $X$,
such that if the Lipschitz constants $L_F$ and $L_B$
satisfy $\max\{L_F,L_B\}<\delta$,
then the following assertions hold:
\begin{enumerate}
\item[\rm(i)]
The problem \eqref{SEE}
has a unique strong $\g$-solution $U\in L^0_{\F}(\O;\g(0,T;X_1))$.
Moreover, $U$ has a version with trajectories in $C([0,T];X_{\frac12})$.

\item[\rm(ii)] If $u_0\in L_{\F_0}^p(\Omega; X_{\frac12} )$,
then the strong solution $U$ given by part {\rm (i)}
belongs to the space $L^p_{\F}(\O;\g(0,T;X_1))\cap
L_{\F}^p(\O;C([0,T];X_{\frac12}))$  and satisfies
\begin{align*}
\|U\|_{L^p(\O;\g(0,T;X_1))} &\leq
C(1+\|u_0\|_{L^p(\O;X_{\frac12} )}),\\
\phantom{aaaa} \|U\|_{L^p(\O;C([0,T];X_{\frac12} ))} & \leq
C(1+\|u_0\|_{L^p(\O;X_{\frac12})}),
\end{align*}
with constants $C$ independent of $u_0$.

\item[\rm(iii)] For all $u_0, v_0\in L_{\F_0}^p(\O;X_{\frac12})$,
the corresponding strong solutions $U,V$ satisfy
\begin{align*}
\|U-V\|_{L^p_{\F}(\O;\g(0,T;X_1))} &\leq C
\|u_0-v_0\|_{L^p(\O;X_{\frac12})},\\
\phantom{aaaa} \|U-V\|_{L^p(\O;C([0,T];X_{\frac12}))} & \leq C\|u_0-
v_0\|_{L^p(\O;X_{\frac12})},
\end{align*}
with constants $C$ independent of $u_0$ and $v_0$.
\end{enumerate}
\end{theorem}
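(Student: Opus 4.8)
The plan is to solve \eqref{SEE} by a Banach fixed point argument in the maximal regularity space, using the deterministic and stochastic maximal $\g$-regularity estimates of Theorems \ref{thm:gammaDeterministicEst} and \ref{thm:gammaStochasticEst} as the linear engine. Having reduced to the case where $A$ is invertible (as in the discussion following \ref{as:A}), so that $X_\alpha=\Dom(A^\alpha)$ and $S$ is exponentially stable, I would fix an interval $[0,\tau]\subseteq[0,T]$ and study the mild-solution map
\[
(\calL V)(t):= S(t)u_0+\int_0^t S(t-s)[F(s,V(s))+f(s)]\,ds+\int_0^t S(t-s)[B(s,V(s))+b(s)]\,dW_H(s)
\]
on the closed subspace $L^p_{\F}(\O;\g(0,\tau;X_1))$. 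By Proposition \ref{prop:strongmild} a fixed point is simultaneously a mild and a strong $\g$-solution, so it suffices to produce a unique fixed point and read off its regularity.

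\emph{Key linear estimates.} Writing $\|V\|_{\g(0,\tau;X_1)}\eqsim\|AV\|_{\g(0,\tau;X)}$ and using the factorisation $A\,S\diamond G=A^{1/2}S\diamond(A^{1/2}G)$, Theorems \ref{thm:gammaDeterministicEst} and \ref{thm:gammaStochasticEst} yield the two ``top order'' bounds $\|A\,S*g\|_{\g(0,\tau;X)}\lesssim\|g\|_{\g(0,\tau;X)}$ and $\|A\,S\diamond G\|_{L^p(\O;\g(0,\tau;X))}\lesssim\|G\|_{L^p(\O;\g(0,\tau;H,X_{1/2}))}$, with constants independent of $\tau$. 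Combined with \ref{as:LipschitzF} and \ref{as:LipschitzB}, these give, for $W_i=\calL V_i$, the $L^p(\O)$-estimate
\[
\|W_1-W_2\|_{\g(0,\tau;X_1)}\le C(L_F+L_B)\,\|V_1-V_2\|_{\g(0,\tau;X_1)}+C(\tilde L_F+\tilde L_B)\,\|V_1-V_2\|_{\g(0,\tau;X)}.
\]
The first term is governed by the small constants $L_F,L_B$; the second, involving the unrestricted $\tilde L_F,\tilde L_B$, is the main obstacle and must be absorbed.

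To absorb it I would prove that the lower-order convolutions gain a power of the interval length: $\|S*g\|_{\g(0,\tau;X)}\le M\tau\,\|g\|_{\g(0,\tau;X)}$ (by integrating over the lag variable and using the integral triangle inequality in $\g(0,\tau;X)$ together with $\sup_t\|S(t)\|\le M$), and, for the stochastic term, $\|S\diamond G\|_{L^p(\O;\g(0,\tau;X))}\lesssim\tau^{\theta}\|G\|_{L^p(\O;\g(0,\tau;H,X))}$ for some $\theta\in(0,\tfrac12)$. The latter follows by applying the space-time regularity of Theorem \ref{thm:gammaStochasticEst}(ii), which places $S\diamond G$ in $\g^{\theta}(\R_+;\Dom(A^{1/2-\theta}))\hookrightarrow\g^{\theta}(\R_+;X)$ with vanishing value at $0$, and then invoking a fractional $\g$-Hardy inequality in the spirit of Proposition \ref{prop:Hardy}. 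Running these through \ref{as:LipschitzF} and \ref{as:LipschitzB} gives the companion estimate
\[
\|W_1-W_2\|_{\g(0,\tau;X)}\le \eta(\tau)\big[(L_F+L_B)\|V_1-V_2\|_{\g(0,\tau;X_1)}+(\tilde L_F+\tilde L_B)\|V_1-V_2\|_{\g(0,\tau;X)}\big]
\]
with $\eta(\tau)\to0$ as $\tau\downarrow0$. Now I would measure $\calL$ in the equivalent norm $\nnn V\nnn:=\|V\|_{\g(0,\tau;X_1)}+\Lambda\|V\|_{\g(0,\tau;X)}$ and choose $\Lambda\eqsim\tilde L_F+\tilde L_B$ together with $\tau$ so small that $\eta(\tau)(\tilde L_F+\tilde L_B)$ is small; a short computation then shows the contraction factor to be $\le C'(L_F+L_B)$ with $C'\le 2C$ \emph{independent} of $\tilde L_F,\tilde L_B$. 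Hence the choice $\delta=1/(4C)$ --- depending only on $A,p,T,X$, exactly as required --- makes $\calL$ a strict contraction on $L^p_{\F}(\O;\g(0,\tau;X_1))$ whenever $\max\{L_F,L_B\}<\delta$. The step length $\tau$ and the number $\lceil T/\tau\rceil$ of steps depend on $\tilde L_F,\tilde L_B$, but enter only the final constants $C$ in (ii)--(iii), not $\delta$.

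\emph{Conclusion and the remaining parts.} The contraction gives a unique fixed point on $[0,\tau]$; the linear-growth bounds $C_F,C_B$ turn the same estimates into the a priori bound $\|U\|_{L^p(\O;\g(0,\tau;X_1))}\lesssim 1+\|u_0\|_{L^p(\O;X_{1/2})}$, while the trace estimates of Theorems \ref{thm:gammaDeterministicEst}(iv) and \ref{thm:gammaStochasticEst}(iii) (applied with $B,b$ valued in $X_{1/2}$, so that $A^{1/2}S\diamond(B+b)$ is continuous in $X$) together with the strong continuity of $S$ on $X_{1/2}$ place $U$ in $C([0,\tau];X_{1/2})$. Since $\delta$ and the step length $\tau$ are independent of the initial data, I would then glue: restart on $[\tau,2\tau]$ with the $\F_\tau$-measurable, $X_{1/2}$-valued datum $U(\tau)$, and iterate finitely many times to reach $[0,T]$, concatenating and invoking uniqueness on each subinterval. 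This yields (ii) and, applied to the difference of the solutions driven by $u_0$ and $v_0$ (where the forcing terms cancel and only the Lipschitz parts together with $S(\cdot)(u_0-v_0)$ survive, the latter controlled in $\g(0,\tau;X_1)$ by $\|u_0-v_0\|_{X_{1/2}}$ via Theorem \ref{thm:traces}), the Lipschitz dependence (iii). For (i), where $u_0$ is merely $\F_0$-measurable into $X_{1/2}$ with no integrability, I would localise over the sets $\{\|u_0\|_{X_{1/2}}\le n\}\in\F_0$, apply the $L^p$ theory on each, and patch the resulting solutions by uniqueness to obtain the $L^0$-solution with trajectories in $C([0,T];X_{1/2})$.
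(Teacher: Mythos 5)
Your proposal follows, in outline, the same route as the paper's own proof: the paper simply transplants the fixed-point scheme of \cite{NVW11eq} verbatim, replacing the fixed point spaces by $Z_{\theta,\kappa}=L^p_{\F}(\O;\g(0,\kappa;X_\theta))$, $Z^H_{\theta,\kappa}=L^p_{\F}(\O;\g(0,\kappa;H,X_\theta))$ and the trace space by $X_{\frac12}$, and it records exactly your top-order contraction condition in the form $L_FK_p^*+L_BK_p^\diamond<1$, with $K_p^*$, $K_p^\diamond$ the norms of $g\mapsto S_w*g$ and $G\mapsto S_w\diamond G$. Your treatment of the lower-order terms (a gain in the interval length, finitely many time steps, and localisation over $\{\|u_0\|_{X_{\frac12}}\le n\}$ for part (i)) is also the mechanism of \cite{NVW11eq}. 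However, two specific steps do not survive as written.

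First, the weighted-norm bookkeeping fails with your stated choice of weight. With $\nnn V\nnn=\|V\|_{\g(0,\tau;X_1)}+\Lambda\|V\|_{\g(0,\tau;X)}$ and $\Lambda\eqsim\tilde L_F+\tilde L_B$, the contraction factor is at least $C(\tilde L_F+\tilde L_B)/\Lambda\eqsim C$, where $C$ is the (not small) maximal regularity constant, so no contraction follows, however small $\tau$ and $L_F,L_B$ are. The pair of estimates you derived is dominated by a rank-one matrix with both rows proportional to $(L_F+L_B,\ \tilde L_F+\tilde L_B)$; its spectral radius is its trace $C(L_F+L_B)+\eta(\tau)(\tilde L_F+\tilde L_B)$, and the adapted weight is the left eigenvector, i.e.\ $\Lambda\eqsim(\tilde L_F+\tilde L_B)/(L_F+L_B)$. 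With that correction your conclusion (contraction once $C(L_F+L_B)<\tfrac12$ and $\eta(\tau)(\tilde L_F+\tilde L_B)<\tfrac12$, so that $\delta$ is independent of $\tilde L_F,\tilde L_B$) is restored.

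Second, your justification of the lower-order stochastic gain is not valid as stated: for $\theta<\tfrac12$, elements of $\g^{\theta}(\R_+;X)$ have no trace at $t=0$ (the paper obtains point values and H\"older continuity only for exponents above $\tfrac12$), so ``vanishing value at $0$ plus a fractional $\g$-Hardy inequality'' is not an available argument. The estimate you need is nonetheless true and provable with the paper's tools. Either note that multiplication by $\one_{(0,\tau)}$ has norm $\lesssim\tau^{\theta}$ from $L^2(\R_+)$ into $H^{-\theta}(\R_+)$ and apply the ideal property (Proposition \ref{prop:ideal}) to get $\|\one_{(0,\tau)}U\|_{\g(\R_+;X)}\lesssim\tau^{\theta}\|U\|_{\g^{\theta}(\R_+;X)}$; or, more directly, repeat the It\^o--Fubini--property-$(\alpha)$ computation from the proofs of Proposition \ref{prop:pindependence} and Theorem \ref{thm:gammaStochasticEst}: after the $\g$-multiplier step with the $\g$-bounded family $\{S(r):r\ge0\}$, the kernel $(t,s)\mapsto\one_{t>s}G(s)$ on $(0,\tau)^2$ factors through the averaging operator $L^2((0,\tau)^2;H)\to L^2(0,\tau;H)$, whose norm is $\tau^{1/2}$, giving $\|S\diamond G\|_{L^p(\O;\g(0,\tau;X))}\lesssim_{p,X,A}\tau^{1/2}\|G\|_{L^p(\O;\g(0,\tau;H,X))}$ outright. (A minor further point: the bound $\|S(\cdot)(u_0-v_0)\|_{\g(0,T;X_1)}\lesssim\|u_0-v_0\|_{X_{\frac12}}$ used in (iii) comes from Proposition \ref{prop:Hinftysquare}, not from Theorem \ref{thm:traces}, whose extension operator is $(1+tA)^{-1}$ rather than $S(t)$.)
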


\begin{proof}
A proof is obtained by repeating the proof of the corresponding maximal $L^p$-regularity
result of \cite{NVW11eq} {\em verbatim}. Here the trace space
$D_A(1-\frac1p,p)$ of \cite{NVW11eq} is should be replaced by the trace space $X_{\frac12}$.
Moreover, the fixed point spaces used in the proof \cite{NVW11eq} should be replaced by
\begin{align*}
Z_{\theta,\kappa} & = L^p_{\F}(\O;\g(0,\kappa;X_\theta)),\\
Z^H_{\theta,\kappa} & = L^p_{\F}(\O;\g(0,\kappa;H,X_{\theta})).
\end{align*}
where $\kappa \in (0,T]$ and $\theta\in [0,1]$.
The proof gives the following explicit smallness condition on the Lipschitz coefficients.
First rescale $A$ to $A+w$, where $w\in \R$ is large enough in order that the spectrum of $A+w$
is contained in the open right half-plane, and write $S_w(t) = e^{-wt}S(t)$.
Denote by $K_p^*$ the norm of the operator
$g\mapsto S_w *g$ from
$L^p_{\F}(\O;\g(\R_+;X))$ into $L^p_{\F}(\O;\g(\R_+;X_1))$, and by
$K_p^\diamond$ the norm of the operator $G\mapsto S_w\diamond G$ from
$L^p_{\F}(\O;\g(\R_+;H,X_{\frac12}))$
into $L^p_{\F}(\O;\g(\R_+;X_1))$.
Then the conclusions of the theorem hold if $L_F K_p^* + L_B K_p^\diamond < 1$.
\end{proof}

\begin{remark}
Applying Theorem \ref{thm:gammaDeterministicEst} (ii) to the space $X$ and
Theorem \ref{thm:gammaStochasticEst}(ii) to the space $X_{\frac12}$
one can prove in the same way that
\[U\in L^0(\O;\g^{\theta}(0,T;X_{1-\theta})) \ \text{for all $\theta\in
[0, \tfrac12)$}\]
and the following estimates hold:
\begin{align*}
\|U\|_{L^p(\O;\g^{\theta}(0,T;X_{1-\theta}))} &\leq
C(1+\|u_0\|_{L^p(\O;X_{\frac12})}),
\\ \|U-V\|_{L^p(\O;\g^{\theta}(0,T;X_{1-\theta}))} &\leq C
\|u_0-v_0\|_{L^p(\O;X_{\frac12})},
\end{align*}
where $U$ and $V$ are the solutions with initial values $u_0$ and $v_0$
respectively.
If $X$ has cotype $q\in [2, \infty)$, then by Proposition \ref{prop:gammsobolev},
\[U\in L^p(\O;B^{\theta+\frac{1}{q}-\frac12}_{q,q}([0,T];X_{1-\theta})) \ \text{for all
$\theta\in [0, \tfrac12)$}.\]
By Remark \ref{rem:improvedembedding}, one can replace the Besov scale by the Bessel-potential scale
if $q=2$ or $X$ is a $q$-concave Banach lattice.
\end{remark}

\begin{remark}
The smallness condition cannot be omitted in Theorem \ref{thm:SE}. A detailed
discussion in the $L^p$-maximal regularity setting on this matter can be found in \cite{BrzVer11}.
For $p=2$ and $X$ a Hilbert space, this discussion applies to the present setting as well.
 See also \cite{KimLee} for a related result for systems.
\end{remark}

\begin{remark}
Inspection of the the proof, in combination with Remark \ref{rem:suffXA}, reveals
that the results of Theorem \ref{thm:SE}
still hold for Banach spaces $X$ which have the decoupling property and property $(\alpha)$.
In particular, this includes the case $X = L^1(\mu)$.
\end{remark}

For the convenience of the reader, we also include an explicit formulation of the
corresponding result for the deterministic problem \eqref{EE}. We take $B \equiv 0$, $b\equiv 0$,
and assume that the initial value $u_0$ is a fixed element of $X$. Hypothesis H$_{\rm det}$ is now
understood to be the same as (H), with the following modifications:
\begin{enumerate}
 \item[\rm(i)]  all objects are taken to be deterministic;
 \item[\rm(ii)]  assumption (HB) is canceled.
\end{enumerate}

\begin{theorem}\label{thm:EE}
Let $X$ be Banach space with finite cotype,
let {\rm H$_{\rm det}$} be satisfied and assume in addition
that some translate of $A$ is $\g$-sectorial of angle $<\pi/2$. Let $p\in (0,\infty)$ be given.
There exists a constant $\delta>0$, depending only on $A$, $p$, $T$, $X$,
such that if $L_F<\delta$,
then the following assertions hold:
\begin{enumerate}
\item[\rm(i)]
For all $u_0\in X_{\frac12}$, the problem \eqref{EE}
has a unique strong $\g$-solution $U$. It belongs to $\g(0,T;X_1)\cap \g^1(0,T;X)$ and satisfies
\begin{align*}
\|U\|_{\g(0,T;X_1)} + \|U\|_{\g^1(0,T;X)}&\leq
C(1+\|u_0\|_{ X_{\frac12}}),
\end{align*}
and for all $\theta\in [0,\frac12)$  one has $u\in C^{\theta}([0,T];X_{1-\theta})$ and
\[\|U\|_{C^{\theta}([0,T];X_{1-\theta})}  \leq C(1+\|u_0\|_{ X_{\frac12}}),\]
with constants $C$ independent of $u_0$.

\item[\rm(ii)] For all $u_0, v_0\in X_{\frac12}$,
the corresponding strong solutions $U,V$ satisfy
\begin{align*}
\|U-V\|_{\g(0,T;X_1)} + \|U-V\|_{\g^1(0,T;X)}&\leq C
\|u_0-v_0\|_{ X_{\frac12}},\\
\phantom{aaaa} \|U-V\|_{C^{\theta}([0,T];X_{1-\theta})} & \leq C\|u_0-
v_0\|_{ X_{\frac12}}, \ \ \   \theta\in [0,\tfrac12 ),
\end{align*}
with constants $C$ independent of $u_0$ and $v_0$.
\end{enumerate}
\end{theorem}
The space $X$ need not be UMD; the UMD property comes in only when dealing with
stochastic integrals.
We do need a finite cotype assumption to ensure that $S u_0\in \g(\R_+;X_1)$
(by the second part of Proposition \ref{prop:Hinftysquare}).

The $\g$-sectoriality condition is automatically fulfilled if (H) holds and
$X$ has property $(\Delta)$ (see Proposition \ref{prop:functionalcalcgammasect}).

\section{Time-dependent case}\label{sec:see-time}

In the same setting as before we now consider the following time-dependent version of \eqref{SEE} with an
operator family $A = (A(t))_{t\in [0,T]}$ consisting of densely defined operators on $X$
with common domains $\Dom(A(t)) =: X_1$:
\begin{equation}\tag{SEE$'$}\label{SEE'}
\left\{\begin{aligned}
dU(t)  +  A(t) U(t)\, dt & =  [F(t,U(t)) + f(t)]\,dt \\ & \qquad  +
[B(t,U(t)) +b(t)]\,dW_H(t), \quad t\in
[0,T],\\
 U(0) & = u_0.
\end{aligned}
\right.
\end{equation}
Below we shall extend the definition of a strong solution
to the time-dependent problem \eqref{SEE'} for operators
$A$ and prove the existence and uniqueness
of strong solutions for \eqref{SEE'} by means of maximal regularity techniques.

Throughout this section we replace Hypothesis \ref{as:A}
by the following hypothesis \ref{as:A'} and
we say that {\em Hypothesis {\rm (H)}$'$ holds} if \ref{as:A'},
\ref{as:LipschitzF}, \ref{as:LipschitzB}, and \ref{as:initial_value} hold,
with

\let\ALTERWERTA\theenumi
\let\ALTERWERTB\labelenumi
\def\theenumi{{\rm (HA)$'$}}
\def\labelenumi{(HA)$'$}
\begin{enumerate}
\item\label{as:A'}
Each operator $A(t)$, viewed as a densely defined operator
on $X$ with domain $X_1$, is invertible and
has a bounded $H^\infty(\Sigma_\sigma)$-calculus, with $\sigma\in (0,\frac12\pi)$ independent of $t\in [0,T]$.
There is a constant $C$, independent of $t\in [0,T]$,
such that for all $\varphi\in H^\infty(\Sigma_{\sigma})$,
\[\|\varphi(A(t))\|\leq C \|\varphi\|_{H^\infty(\Sigma_{\sigma})}.\]
The Banach space $X$ has type $p_0\in (1, 2]$ and cotype $q_0\in [2, \infty)$,
and we have $A\in B^{\frac{1}{r}}_{r,1}([0,T]; \calL(X_1,X))$
for some $r\in [1, \infty]$ satisfying $\frac1r \geq \frac{1}{p_0} - \frac{1}{q_0}$.
\end{enumerate}
\let\theenumi\ALTERWERTA
\let\labelenumi\ALTERWERTB
The first part of Hypothesis \ref{as:A'} implies that the operators $-A(t)$
generate bounded analytic $C_0$-semigroups on $X$ for which the usual sectoriality
estimate holds holds uniformly in $t\in [0,T]$.

Assumption \ref{as:A'} together
with \cite[Theorem 5.1]{HytVer} implies that $\{A(t): t\in [0,T]\}\subseteq \calL(X_1,X)$ is $\g$-bounded.
In the next lemma we show that the variation of the
$\g$-bounds becomes arbitrary small on small intervals.
\begin{lemma}\label{lem:Aphi}
Let {\rm (H)$'$} be satisfied. For all $\varepsilon>0$ there exists a $\d>0$ such that
for all $0\le s\le s'\le T$ with $s'-s\le \d$ we have
\[\g(\{A(u)-A(v): u,v\in [s,s']\})<\varepsilon.\]
\end{lemma}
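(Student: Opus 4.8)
The plan is to read off the conclusion from the Besov regularity of $A$ via Proposition~\ref{prop:HytVer}, the smoothness estimate of Proposition~\ref{prop:Rbddsmooth}, and the elementary calculus of $R$-bounds: $\g(\mathscr T)\le R(\mathscr T)$, $R(-\mathscr T)=R(\mathscr T)$, and subadditivity under sums, so that the difference set $\{g(u)-g(v)\}=\mathscr T+(-\mathscr T)$ has $R$-bound at most $2R(\mathscr T)$ when $\mathscr T=\{g(t)\}$. First I observe that, since each $A(t)$ is invertible, $A(t)\colon X_1\to X$ is an isomorphism; hence $X_1$ is isomorphic to $X$ and therefore has type $p_0$ and cotype $q_0$ as well. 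Consequently Proposition~\ref{prop:HytVer} applies to $\calL(X_1,X)$-valued functions, with its domain and target roles played by $X_1$ (cotype $q_0$) and $X$ (type $p_0$) and with the admissible exponent $r$ from Hypothesis \ref{as:A'}.

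The heart of the argument is a splitting that freezes the interval-dependence of the constant in Proposition~\ref{prop:HytVer}. Fix $\varepsilon>0$. Because the third Besov index equals $1<\infty$, smooth functions are dense in $B^{1/r}_{r,1}([0,T];\calL(X_1,X))$ (extend $A$ to $\R$ by a bounded extension operator and mollify). Let $K_0$ be the constant furnished by Proposition~\ref{prop:HytVer} on the \emph{fixed} interval $[0,T]$, and choose $A_0\in C^1([0,T];\calL(X_1,X))$ with
\[
2K_0\,\|A-A_0\|_{B^{1/r}_{r,1}([0,T];\calL(X_1,X))}<\tfrac{\varepsilon}{2}.
\]
This fixes $A_0$, and hence $M:=\|A_0'\|_{L^\infty(0,T;\calL(X_1,X))}<\infty$.

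For any subinterval $[s,s']\subseteq[0,T]$ and any $u,v\in[s,s']$ I decompose
\[
A(u)-A(v)=\bigl(A_0(u)-A_0(v)\bigr)+\bigl((A-A_0)(u)-(A-A_0)(v)\bigr).
\]
For the smooth part, Proposition~\ref{prop:Rbddsmooth} applied to $t\mapsto A_0(t)-A_0(s)$, which lies in $W^{1,1}(s,s';\calL(X_1,X))$ and vanishes at $t=s$, gives $R$-bound at most $\|A_0'\|_{L^1(s,s')}\le (s'-s)M$; passing to differences costs a factor $2$, so
\[
R\bigl(\{A_0(u)-A_0(v):u,v\in[s,s']\}\bigr)\le 2(s'-s)M.
\]
For the remainder I enlarge $[s,s']$ to $[0,T]$ (a subset relation) and invoke Proposition~\ref{prop:HytVer} there, obtaining
\[
R\bigl(\{(A-A_0)(u)-(A-A_0)(v):u,v\in[s,s']\}\bigr)\le 2K_0\,\|A-A_0\|_{B^{1/r}_{r,1}([0,T];\calL(X_1,X))}<\tfrac{\varepsilon}{2}.
\]
Adding the two contributions and using $\g(\mathscr T)\le R(\mathscr T)$ yields
\[
\g\bigl(\{A(u)-A(v):u,v\in[s,s']\}\bigr)\le 2(s'-s)M+\tfrac{\varepsilon}{2},
\]
so $\delta:=\varepsilon/(4M)$ (any $\delta>0$ if $M=0$) does the job.

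The only genuine difficulty is that the constant in Proposition~\ref{prop:HytVer} depends on the interval, and the full Besov norm of $A$ on $[s,s']$ does \emph{not} shrink as $s'-s\to 0$, since its $L^r$-component persists; a naive application on $[s,s']$ therefore fails to give a small bound. The decomposition above is exactly what circumvents this: the non-shrinking bulk of the regularity is absorbed once and for all into the smooth approximant $A_0$, whose contribution is controlled by the elementary $W^{1,1}$-estimate with a \emph{vanishing} boundary term (this is the source of the decisive factor $s'-s$), while the genuinely Besov remainder is made uniformly small in $R$-bound on the fixed interval $[0,T]$, freezing its interval-dependent constant $K_0$.
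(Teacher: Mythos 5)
Your proof is correct, and it rests on the same two-scale idea as the paper's proof --- split $A$ into a smooth part whose contribution to the $\g$-bound shrinks linearly in the interval length, plus a remainder whose $R$-bound is uniformly small --- but the implementation is genuinely different. The paper uses no density theorem: it extends $A$ to $\Phi\in B^{1/r}_{r,1}(\R;\calL(X_1,X))$, takes the Littlewood--Paley decomposition, and splits the series at a finite level $N$, so its smooth approximant is a partial Littlewood--Paley sum and the smallness of the remainder is automatic from the convergence of $\sum_{n}2^{n/r}\|\Phi_n\|_{L^r}$; the price is that the tail blocks must be estimated by appealing to the \emph{proof} of Proposition \ref{prop:HytVer} rather than its statement, and the finitely many low blocks are handled by an inline fundamental-theorem-of-calculus/contraction-principle computation, in effect a difference version of Proposition \ref{prop:Rbddsmooth} reproved from scratch. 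Your route is more modular: both propositions enter strictly as black boxes --- Proposition \ref{prop:Rbddsmooth} applied to $t\mapsto A_0(t)-A_0(s)$, whose vanishing left endpoint produces the decisive factor $s'-s$, and Proposition \ref{prop:HytVer} applied once on the fixed interval $[0,T]$, which freezes its interval-dependent constant (your diagnosis that this freezing is the crux is exactly right, since the $R$-bound of the value set $\{A(t)\}$ itself can never become small). What you pay is the density of smooth functions in $B^{1/r}_{r,1}([0,T];\calL(X_1,X))$; this is true because the third Besov index is finite, and your extension-plus-mollification argument does prove it (dominated convergence over the Littlewood--Paley blocks, which also covers the endpoint case $r=\infty$, possible only for Hilbertian $X$) --- so in the end both proofs hinge on the same convergence fact, packaged differently. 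Two small repairs: take $A_0$ to be the mollification itself, hence $C^\infty$ rather than merely $C^1$, since for $r=1$ a $C^1$ function need not lie in $B^{1}_{1,1}$ and you need $A-A_0\in B^{1/r}_{r,1}$ for its Besov norm to be finite; and keep your preliminary observation that $X_1\simeq X$ inherits type $p_0$ and cotype $q_0$ --- it is genuinely needed for Proposition \ref{prop:HytVer}, and the paper invokes it only implicitly.
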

\begin{proof}The proof uses some standard facts about (vector-valued) Besov spaces, for which we
refer the reader to \cite{BeLo, Tr1}.

By standard real interpolation arguments (see \cite[Theorem 4.2.2]{Tr1}) we can find an extension
$\Phi\in B^{\frac{1}{r}}_{r,1}(\R; \calL(X_1,X))$ of $A$.
Let $(\varphi_m)_{m\geq 0}$ be the usual Littlewood-Paley
decomposition and let $\Phi_{m} = \varphi_m*\Phi$. Then
$\Phi_{mn}=0$ if $|n-m|>1$. Let further $\varphi_{-1} = 0$.

It follows from the proof of \cite[Theorem 5.1]{HytVer}
that for all $m,n\geq 0$,
\[\g(\varphi_m*\Phi_n(u): u\in [s, s'])\leq C 2^{m/r} \|\Phi_n\|_{L^r(\R;\calL(X_1, X))}.\]
Therefore, writing $\Phi_{mn} = \varphi_m*\Phi_n$, by the identity
$$\sum_{n=0}^\infty \sum_{m=n-1}^{n+1} \varphi_m*\Phi_{mn} = \Phi$$
and \cite[Lemma 2.4]{We} we find that for every $N\geq 0$,
\begin{align*}
\g(A(u)-A(v):u,v\in [s, s'])
&\leq \sum_{n=0}^\infty \sum_{m=n-1}^{n+1} \g(\Phi_{mn}(u)-\Phi_{mn}(v): u,v\in [s, s'])
\\ & \leq \sum_{n=0}^N \sum_{m=n-1}^{n+1} \g(\Phi_{mn}(u)-\Phi_{mn}(v): u,v\in [s, s'])
\\ & \qquad + 2\sum_{n=N+1}^\infty \sum_{m=n-1}^{n+1} \g(\Phi_{mn}(w): w\in [s, s'])
\\ & \leq \sum_{n=0}^N \sum_{m=n-1}^{n+1} \g(\Phi_{mn}(u)-\Phi_{mn}(v): u,v\in [s, s'])
\\ & \qquad + C \sum_{n=N+1}^\infty 2^{n/r} \|\Phi_n\|_{L^r(\R;\calL(X_1, X)}
\end{align*}
for a suitable constant $C$ independent of $N$ and $\Phi$.
Let $\varepsilon>0$ be arbitrary. By the equivalence of norms
$$\|\Phi\|_{B^{\frac{1}{r}}_{r,1}(\R; \calL(X_1,X))} \eqsim \sum_{n\geq0} 2^{n/r} \|\Phi_n\|_{L^r(\R;\calL(X_1,X))}$$
we may fix $N\geq 0$ so large that
\[\sum_{n\geq N+1} 2^{n/r} \|\Phi_n\|_{L^r(\R;\calL(X_1,X))}<\varepsilon/(2C).\]
Fix $0\leq n\leq N$ and $m\in \{n-1, n, n+1\}$.
Note that $\Phi_{mn}\in W^{1,1}(0,T;\calL(X_1, X))$. In particular, there exists
a number $\delta_{mn}>0$ such that $\|\Phi_{mn}'\|_{L^1(s, s';\calL(X_1, X))}<\varepsilon/(6N)$
whenever $|s-s'|<\delta_{mn}$ and $s,s'\in [0,T]$. Let
$\delta = \min\{\delta_{mn}: \ 0\leq n\leq N, \
n-1\leq m\leq n+1\}$.
We claim that $\g(\Phi_{mn}(u)-\Phi_{mn}(v): u,v\in [s, s'])<\varepsilon/(6N)$ whenever $|s-s'|<\delta$.
To prove this it suffices to consider pairs $(u,v)$ with $u\leq v$.
Choose arbitrary $x_1, \ldots, x_k\in X$ and $(u_i)_{i=1}^k, (v_i)_{i=1}^k\in [s', s]$ with $u_i<v_i$ for
every $i\leq k$. By the triangle inequality and Kahane's contraction principle,
\begin{align*}
\Big\|\sum_{i=1}^k \g_i (\Phi_{mn}(u_i)-\Phi_{mn}(v_i))x_i\Big\|_{L^2(\O;X)}
&= \Big\|\int_{s'}^s \sum_{i=1}^k \g_i \one_{[u_i, v_i]}(t) \Phi_{mn}'(t) x_i\,dt \Big\|_{L^2(\O;X)}
\\ & \leq \int_{s'}^s  \Big\|\sum_{i=1}^k \g_i  \one_{[u_i, v_i]}(t)
\Phi_{mn}'(t) x_i\Big\|_{L^2(\O;X)}\,dt
\\ & \leq \int_{s'}^s  \Big\|\sum_{i=1}^k \g_i
\Phi_{mn}'(t) x_i\Big\|_{L^2(\O;X)}\,dt\\ & \leq \int_{s'}^s  \|\Phi_{mn}'(t)\| \,dt \ \cdot \  \Big\|\sum_{i=1}^k \g_i  x_i\Big\|_{L^2(\O;X)}
\\ & \leq \frac{\varepsilon}{6N} \Big\|\sum_{i=1}^k \g_i  x_i\Big\|_{L^2(\O;X)}.
\end{align*}
This proves the claim. Combination of the assertions yields that
\begin{align*}
\g(A(u)-A(v):u,v\in [s, s'])  \leq 3N\frac{\varepsilon}{6N} + C\frac{\varepsilon}{2C} = \varepsilon.
\end{align*}
\end{proof}

\begin{definition}
Let $X$ be a UMD space and let {\rm (H)$'$} be satisfied.
A process $U: [0,T]\times\Omega \to X$ is called a
{\em strong solution} of \eqref{SEE'} if it is strongly measurable and adapted,
and
\begin{enumerate}
\item[\rm(i)] almost surely, $U\in \g(0,T;X_1)$;
\item[\rm(ii)] for all $t\in [0,T]$, almost surely the following identity holds in $X$:
\begin{equation}
\label{strongsol'}\begin{aligned}
U(t) + \int_0^t A(s) U(s) \, ds  = u_0 & +  \int_0^t [F(s,U(s)) + f(s)] \, ds
 + \int_0^t [B(s,U(s)) + b(s)]\, d W_H(s).
\end{aligned}
\end{equation}
\end{enumerate}
\end{definition}

As before, under {\rm (H)$'$} all integrals are well defined. Note that $A U\in
\g(0,T;X)$ almost surely by Lemma \ref{lem:Aphi}.
Again $U$ has a pathwise continuous version
for which, almost surely, the identity in (ii) holds for all $t\in [0,T]$.

\begin{theorem}\label{thm:SE2}
Let $X$ be a UMD space with property $(\a)$ and let {\rm (H)$'$} be satisfied.
There exists a constant $\delta>0$ such that if
the Lipschitz constants $L_F$ and $L_B$
satisfy $\max\{L_F, L_B\} <\delta$,
then the assertions of Theorem \ref{thm:SE} (i),
(ii) and (iii) remain true for the problem \eqref{SEE'}.
\end{theorem}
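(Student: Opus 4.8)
The plan is to reduce the well-posedness of \eqref{SEE'} to a \emph{linear} time-dependent maximal $\g$-regularity estimate, and then to run the fixed point scheme of Theorem \ref{thm:SE} \emph{verbatim}. Concretely, writing $A = (A(t))_{t\in[0,T]}$, I would first show that the inhomogeneous linear problem
\[
dU + A(\cdot) U\,dt = f\,dt + G\,dW_H, \qquad U(0) = u_0,
\]
is well posed, with
\[
\|U\|_{L^p(\O;\g(0,T;X_1))} \lesssim_{A,p,T,X} \|f\|_{L^p(\O;\g(0,T;X))} + \|G\|_{L^p(\O;\g(0,T;H,X_{\frac12}))} + \|u_0\|_{L^p(\O;X_{\frac12})},
\]
together with the corresponding $C([0,T];X_{\frac12})$ bound. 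Once the two linear solution maps $f\mapsto U$ and $G\mapsto U$ are bounded, with finite norms $\wt K_p^*$ and $\wt K_p^\diamond$ replacing $K_p^*$ and $K_p^\diamond$, the nonlinear argument of \cite{NVW11eq} applies word for word and gives (i)--(iii) under the smallness condition $L_F \wt K_p^* + L_B \wt K_p^\diamond < 1$, which fixes $\delta$ and makes it depend only on $A,p,T,X$.

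The heart of the matter is therefore the linear estimate, which I would obtain by \emph{freezing coefficients} on small intervals. Using Lemma \ref{lem:Aphi}, fix $\e>0$ (to be chosen) and a partition $0 = t_0 < \cdots < t_N = T$ with $\g(\{A(u)-A(v): u,v\in[t_{j-1},t_j]\}) < \e$ for every $j$. On $I_j := [t_{j-1},t_j]$ set $A_0 := A(t_{j-1})$; by \ref{as:A'} this is an invertible operator with a bounded $H^\infty$-calculus of angle $<\pi/2$, so Theorems \ref{thm:gammaDeterministicEst} and \ref{thm:gammaStochasticEst} provide autonomous (stochastic) maximal $\g$-regularity for $A_0$, and $0\in\varrho(A_0)$. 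Rewriting the equation on $I_j$ as
\[
dU + A_0 U\,dt = \big[f - (A(\cdot)-A_0)U\big]\,dt + G\,dW_H,
\]
I would set up the fixed point map $U \mapsto \Ext_{A_0}(U(t_{j-1})) + S_0 *[f - (A(\cdot)-A_0)U] + S_0 \diamond G$ on $L^p_\F(\O;\g(I_j;X_1))$, where $S_0$ is the semigroup generated by $-A_0$ and $\Ext_{A_0}$ is the extension operator of Theorem \ref{thm:traces}(ii) (available since property $(\a)$ forces finite cotype). The crucial point is that the perturbation is a $\g$-multiplier: applying Proposition \ref{prop:KW} to $M(\cdot) = A(\cdot)-A_0 \in \calL(X_1,X)$ (condition (ii) being automatic as $X$ has finite cotype and hence contains no copy of $c_0$), together with Lemma \ref{lem:Aphi}, gives pathwise
\[
\|(A(\cdot)-A_0)U\|_{\g(I_j;X)} \le \e\,\|U\|_{\g(I_j;X_1)},
\]
so that after invoking the autonomous maximal regularity of $A_0$ the perturbation contributes a factor $\lesssim \e$ in $\g(I_j;X_1)$. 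Choosing $\e$ so small that this factor is $<\tfrac12$ makes the map a contraction, producing the unique local solution together with the estimate $\|U\|_{\g(I_j;X_1)} \lesssim \|f\|_{\g(I_j;X)} + \|G\|_{\g(I_j;H,X_{\frac12})} + \|U(t_{j-1})\|_{X_{\frac12}}$.

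Finally I would patch the local solutions. The trace embedding \eqref{eq:traceBUC} applied to the frozen operator $A_0$ controls the endpoint value $U(t_j)\in X_{\frac12}$ by the same right-hand side, so iterating over the $N = \lceil T/\delta\rceil$ subintervals yields the global linear estimate with a constant depending on $N$, hence on $T$; the continuity of $U$ in $C([0,T];X_{\frac12})$ then follows from \eqref{eq:traceBUC} on each piece and the matching of traces at the nodes. The main obstacle is exactly this absorption step: the oscillation of $A$ must be measured in the $\g$-bound sense so that Proposition \ref{prop:KW} applies, and Lemma \ref{lem:Aphi} is tailored precisely for this. The only further bookkeeping is that $(A(\cdot)-A_0)U$ is a drift term coupling the deterministic and stochastic parts, which is why I treat the full linear problem (both $f$ and $G$) in a single contraction rather than handling the two forcings separately.
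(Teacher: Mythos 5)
Your proposal shares the two decisive ingredients of the paper's own proof---the partition of $[0,T]$ produced by Lemma \ref{lem:Aphi} and the absorption of the oscillation of $A$ via the $\g$-multiplier Proposition \ref{prop:KW}---but organizes them differently. The paper never proves a linear time-dependent maximal regularity estimate: on each subinterval $[s_m,s_{m+1}]$ it freezes $A$ at the left endpoint and absorbs the difference into the \emph{nonlinearity}, setting $F_{A,m}(t,x) = F(t,x) - A(t)x + A(s_m)x$; since this perturbed nonlinearity still satisfies (HF), with Lipschitz constant at most $L_F + \tfrac12\theta$ in the $X_1$-variable and unchanged $\tilde L_F$, Theorem \ref{thm:SE} (for the autonomous operator $A(s_m)$, with the $\F_{s_m}$-measurable initial value $U(s_m)\in X_{\frac12}$) is invoked as a black box on each piece, and the global solution is assembled by induction. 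Your route---first a time-dependent linear estimate, obtained by a contraction in which $(A(\cdot)-A_0)U$ is a drift perturbation of size at most $\e\|U\|_{\g(I_j;X_1)}$, then a single global nonlinear fixed point---is viable and yields a time-dependent linear maximal $\g$-regularity statement of independent interest; the price is that you must redo a fixed-point argument, the patching, and the bookkeeping for the large constants $\tilde L_F,\tilde L_B$, all of which the paper gets for free by reusing Theorem \ref{thm:SE}.

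Two steps in your linear argument need repair, though both are fixable with tools from the paper. First, the homogeneous part of your fixed point map must be the semigroup orbit $t\mapsto S_0(t-t_{j-1})U(t_{j-1})$, not $\Ext_{A_0}(U(t_{j-1}))$: the extension operator $(1+tA_0)^{-1}x$ of Theorem \ref{thm:traces}(ii) is a right inverse of the trace map but does not solve $v'+A_0v=0$, so a fixed point of the map as written is not a mild solution of the linear problem. The orbit has the required mapping property $X_{\frac12}\to \g(I_j;X_1)$ by the second part of Proposition \ref{prop:Hinftysquare} (this is where finite cotype enters; cf.\ the comment following Theorem \ref{thm:EE}); alternatively, one may keep $\Ext_{A_0}$ but then it must be used to reduce to zero initial data, producing the extra forcing term $-(\partial_t + A(\cdot))\Ext_{A_0}(U(t_{j-1}))\in\g(I_j;X)$ rather than appearing as a summand in a Duhamel-type map. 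Second, the endpoint control of $U(t_j)$ in $X_{\frac12}$ cannot come from \eqref{eq:traceBUC} applied to the full solution: the stochastic convolution does not lie in $\g^1(I_j;X)$ pathwise, since its time regularity in the $\g$-Sobolev scale stops strictly below the exponent $\tfrac12$ (Theorem \ref{thm:gammaStochasticEst}(ii)). The continuity in $X_{\frac12}$ and the endpoint bound for the stochastic part must instead be taken from Theorem \ref{thm:gammaStochasticEst}(iii) applied in the space $X_{\frac12}$, with \eqref{eq:traceBUC} or Theorem \ref{thm:gammaDeterministicEst}(iv) handling the orbit and the deterministic convolution---exactly as Theorem \ref{thm:SE} produces its $C([0,T];X_{\frac12})$ bounds. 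With these two substitutions your argument goes through.
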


\begin{proof}
Using Lemma \ref{lem:Aphi} we find a partition $0=s_0<s_1< \ldots< s_M=T$ such that for $m=1,
\ldots, M$ one has
\[\g(\{A(u)-A(v): u,v\in [s_{m-1},s_m]\})< \theta/2.\]
By Proposition \ref{prop:KW}, for all $m=1, \ldots, M$ and $\phi\in \g(s_{m-1},s_{m};X_1)$
one then has
\[\|(A - A(s_{m-1})) \phi\|_{\g(a,b;X)}\leq \tfrac12{\theta} \|\phi\|_{\g(s_{m-1},s_m;X_1)}.\]
Without loss of generality
we may replace $A$ by $A-w$ so as to achieve that for $m=1,\ldots,M$ on has
$K^*_{p,m} L_F  + K^\diamond_{p,m} L_B<1$, say $K^*_{p,m}  L_F  + K^\diamond_{p,m}
L_B = 1-\theta$ with $\theta\in (0,1)$. Here $K^*_{p,m}$ and $K^\diamond_{p,m}$ are the
norms associated with the operators $A(s_{m-1})$ as before.

We first solve the problem \eqref{SEE'} on the interval $[s_0, s_1]$. Let $F_{A,0}:[s_0,s_1]\times\O\times
X_1\to X$ be defined by $F_{A,0}(t,x) = F(t,x) - A(t)x + A(0)x$. Then $F_{A,0}$
satisfies \ref{as:LipschitzF} (with $F$ replaced by $F_{A,0}$). Moreover,
$L_{F_{A,0}}\leq L_F + \frac12\theta$ and $\tilde L_{F_{A,0}}\leq
\tilde L_{F}$,
and therefore the condition of Theorem \ref{thm:SE}
holds for the equation with $F$ replaced by $F_{A,0}$ and $A$ replaced by $A(0)$,
with constants satisfying $K^*_{p,0} L_{F_{A,0}}  + K^\diamond_{p,0} L_B \le 1-\frac12\theta$.
Hence
Theorem \ref{thm:SE} implies the existence of a unique strong solution $U\in
L^0_{\F}(\O; L^p(0,s_1;X_1))$. Then
almost surely, for all $t\in [0,s_1]$ the following identity holds in $X$:
\begin{align*}
U(t) + \int_0^t A(0) U(s) \, ds = u_0& + \int_0^t F_{A,0}(s,U(s)) + f(s) \, ds
+ \int_0^t
B(s,U(s)) + b(s) \, d W_H(s)
\end{align*}
and \eqref{strongsol'} holds on $[0,s_1]$ almost surely.
Moreover, the assertions of Theorem \ref{thm:SE} (i), (ii) and (iii) hold on
$[0,s_1]$.

Now we proceed inductively. Suppose we know that the assertions
of Theorem \ref{thm:SE} (i), (ii) and (iii) hold for the problem \eqref{SEE'}
on the interval $[0, s_{m}]$ with $m\leq M$. If $m=M$, there is
nothing left to prove. If $m<M$, we shall prove next existence and uniqueness on
the interval $[s_{m},s_{m+1}]$.

Consider the problem
\begin{equation}\label{eq:VFAn}
\left\{\begin{aligned}
dV(t)  +  A(s_{m}) V(t)\, dt& = [F_{A,{m}}(t,V(t)) + f(t)] \,dt \\ & \qquad
 + [B(t,V(t))+ b(t)]\,dW_H(t),\quad t\in [s_{m}, s_{m+1}],\\
 V(s_{m}) & = U(s_{m})
\end{aligned}
\right.
\end{equation}
with $F_{A,m} = F(t,x) - A(t) + A(s_{m})$. As before, Theorem \ref{thm:SE} can
be applied to obtain a unique strong solution $V\in L^0_{\F}(\O;
L^p(s_{m},s_{m+1};X_1))$ and assertions (i), (ii) and (iii) of Theorem
\ref{thm:SE} hold
for the solution $V$ of \eqref{eq:VFAn}. Now we extend $U$ to $[0,s_{m+1}]$ by
setting $U(t) := V(t)$ for $t\in [s_m,s_{m+1}]$. Then $U$ is in $L^0_{\F}(\O;
\g(0,s_{m+1};X_1))$
 and has a version with trajectories in $C([0,s_{m+1}];X_{\frac12})$.
Moreover, using the induction hypothesis, one sees that it is a
strong solution on $[0,s_{m+1}]$. It is also the unique strong solution on
$[0,s_{m+1}]$. Indeed, let $W\in L^0_{\F}(\O; \g(0,s_{m+1};X_1))$ be another
strong solution on $[0,s_{m+1}]$. By the induction hypothesis we
have $W = U$ in $L^0_{\F}(\O;\g(0,s_{m};X_1))$. In particular, the definition
of a
strong solution implies that $W(s_{m}) = U(s_m)$ almost surely. Now one can see
that $W$ is strong solution of \eqref{eq:VFAn} on $[s_{m},s_{m+1}]$. Since the
solution of \eqref{eq:VFAn} is unique, it follows that also $W = V$ in
$L^0_{\F}(\O;\g(s_{m},s_{m+1};X_1))$. Therefore, the definition of $U$ shows
that $U = W$ in $L^0_{\F}(\O;\g(0,s_{m+1};X_1))$. The other results in (i),
(ii)
and (iii) for $U$ on $[0,s_{m+1}]$ follow from the corresponding results for $V$
as well. This completes the induction step and the proof.
\end{proof}

\begin{remark}
The H\"older continuity assumption on $A$ can be weakened a bit; for instance,
only piecewise H\"older continuity would suffice. The main ingredient in the above
approach is that the range of $A$ is $\g$-bounded in $\calL(X_1, X)$ and for
each $\varepsilon>0$ there is a dense collection of $t\in [0,T]$ for which
\[\limsup_{\delta\downarrow 0} \g\{A(t+h)- A(t): h\leq \delta\} <\varepsilon.\]
If $X$ is a Hilbert space, the assumption reduces to piecewise continuity of
$A$.
\end{remark}

\begin{remark} The usage of constants $K_{p,m}$ depending on $m$ in the above proof can be avoided by
observing that they can be uniformly bounded by a constant depending only upon $p$,
$X$ and the uniform $H^\infty$-constant of the operators $A(t)$. This has already
been implicitly used in the proof of \cite[Theorem 5.2]{NVW11eq}.
\end{remark}

\section{Application to a stochastic heat equation with gradient noise\label{sec:appl}}

As an application we show how one can solve a stochastic heat equation with
gradient noise in an $L^q(\R^d)$-space, where $q\in (1, \infty)$. For $q\in [2,
\infty)$, the assertions are different from those in \cite{Kry} and
\cite{NVW11eq}.

On $\R^d$ we consider the second order SPDE
\begin{equation}\label{eq:heateq}
 \left\{
   \begin{array}{ll}
     du & = \Delta u + F(u) + B(u) \, d W_H, \\
     u(0,\cdot) & = u_0.
   \end{array}
 \right.
\end{equation}
Let $s\in \R$ be fixed. The realization of the Laplace operator $\Delta$ on $H^{s,q}(\R^d)$, also denoted by $\Delta$,
has domain $H^{s+2,q}(\R^d)$ and has a bounded $H^\infty$-calculus of angle $<\pi/2$.
We recall that for any Hilbert space $H$ and any $\sigma\in\R$ and $p\in (0,\infty)$
we have a natural isomorphism of Banach spaces
$$ \g(H;H^{\sigma,q}(\R^d)) \simeq H^{\sigma,q}(\R^d;H).$$
This allows us to formulate our results without any explicit reference
to $\g$-norms.

We shall assume that $F:H^{s+2, q}(\R^d)\to H^{s, q}(\R^d)$ and $B:H^{s+2, q}(\R^d)\to
H^{s+1,q}(\R^d;H)$ are functions such that for all $\phi_1, \phi_2\in
H^{s+2,q}(\R^d;L^2(0,T))$ one has
\begin{equation}\label{eq:exF}
\begin{aligned}
 & \|F(\phi_1) - F(\phi_2)\|_{H^{s,q}(\R^d;L^2(0,T))}
\\ & \qquad \leq L_{F} \|\phi_1 -
\phi_2\|_{H^{s+2,q}(\R^d;L^2(0,T))} + \tilde{L}_{F} \|\phi_1
-\phi_2\|_{H^{s,q}(\R^d;L^2(0,T))},
\end{aligned}\end{equation}
\begin{equation}\label{eq:exB}
\begin{aligned}
 & \|B(\phi_1) - B(\phi_2)\|_{H^{s+1,q}(\R^d;L^2(0,T))}
\\ & \qquad \leq L_{B} \|\phi_1 -
\phi_2\|_{H^{s+2,q}(\R^d;L^2(0,T))} + \tilde{L}_{B} \|\phi_1
-\phi_2\|_{H^{s+1,q}(\R^d;L^2(0,T))}.
\end{aligned}
\end{equation}
If the Lipschitz constants $L_{F}$ and $L_{B}$ are small enough, then for every $q\in (1, \infty)$ and
every $u_0\in L^0(\O;\F_0,H^{s+1,q}(\R^d))$, \eqref{eq:heateq} has a unique
solution
\[u\in L^0(\O;H^{s+2,q}(\R^d;L^2(0,T)))\cap L^0(\O;C([0,T];H^{s+1,q}(\R^d))).\]
This follows from Theorem \ref{thm:SE} with $X = H^{s,q}(\R^d)$, $X_1 =
H^{s+2,q}(\R^d)$.

Let us now consider the case $s=-1$ in more detail.
The assertion $u\in L^0(\O;H^{1,q}(\R^d;L^2(0,T)))$ can be restated as
\[\int_{\R^d}  \Big(\int_0^T |D u(t,x)|^2\, dt\Big)^{q/2} \, dx<\infty \quad
\text{almost surely}.\]

Taking $H = \ell^2$ with orthonormal basis $(h_n)$ and taking $w_n = W_H h_n$, one could
consider noise of the form
\[B(u) \,dW_H = \sum_{n\geq 1} g_n(u, Du)\, d w_n,\]
where
\begin{align*}
\Big(\sum_{n\geq 1}  |g_n(x,a) - g_n(y, b)|^2\Big)^{\frac{1}{2}} \leq L_{g,1}|x-y| +
L_{g,2} |a-b|,  \ x, y\in \R, a,b\in \R^d,
\end{align*}
with $x, y\in \R$, $a,b\in \R^d$, and with $L_{g,2}$ sufficiently small. Indeed,
\eqref{eq:exB} follows from the next lemma.

\begin{lemma}\label{lem:glipschitz}
Let $\OO\subseteq \R^d$ be an open set and let $f:\R\times\R^d\times\R^{d\times d}\to \R$ be a
Lipschitz function with
\begin{align*}
|&f(x, a, A) - f(y, b, B)|  \leq L_{f,1} |x-y| + L_{f,2} |a-b| + L_{f,3} |A-B|,
\end{align*}
where $x, y\in \R, a,b\in \R^d, A,B\in \R^{d\times d}$. Let $p\in [1, \infty)$.
Then for all $\phi_1, \phi_2\in \g(0,T;W^{2,p}(\mathcal{O}))$,
\begin{align*}
\| f(\phi_1,  D\phi_1, D^2 \phi_1) & - f(\phi_2,  D\phi_2, D^2\phi_2)\|_{\g(0,T;L^p(\OO))}
\\ & \leq C L_{f,1} \|\phi_1 - \phi_2\|_{\g(0,T;L^p(\OO))}
+ C L_{f,2} \|D \phi_1 - D\phi_2\|_{\g(0,T;L^p(\OO;\R^d))}
\\ & \qquad + C L_{f,3} \|D^2\phi_1 - D^2\phi_2\|_{\g(0,T;L^p(\OO;\R^{d\times
d}))}.
\end{align*}
\end{lemma}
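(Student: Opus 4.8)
The plan is to reduce everything to the square function characterisation \eqref{eq:gammaBfunc} for the Banach lattice $L^p(\OO)$, which has finite cotype for $1\le p<\infty$, and to exploit that $f$ acts pointwise. First I would invoke the inclusion $W^{2,p}(\OO)\hookrightarrow L^p(\OO)$ and the differentiation operators $D:W^{2,p}(\OO)\to L^p(\OO;\R^d)$ and $D^2:W^{2,p}(\OO)\to L^p(\OO;\R^{d\times d})$, which are bounded, together with the ideal property (Proposition \ref{prop:ideal}), to conclude that for $\phi_i\in\g(0,T;W^{2,p}(\OO))$ the objects $\phi_i$, $D\phi_i$, $D^2\phi_i$ define elements of $\g(0,T;L^p(\OO))$, $\g(0,T;L^p(\OO;\R^d))$ and $\g(0,T;L^p(\OO;\R^{d\times d}))$ respectively. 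Via \eqref{eq:gammaBfunc} each of these is realised as a genuine measurable function on $(0,T)\times\OO$, so that the pointwise values $\phi_i(t,\xi)$, $D\phi_i(t,\xi)$, $D^2\phi_i(t,\xi)$ are defined for almost every $(t,\xi)$; since $f$ is continuous, the composition $g(t,\xi):=f(\phi_1,D\phi_1,D^2\phi_1)(t,\xi)-f(\phi_2,D\phi_2,D^2\phi_2)(t,\xi)$ is then measurable.

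The core computation runs through the square function. For almost every $\xi$ the pointwise Lipschitz bound on $f$ gives, for almost every $t$,
\[
|g(t,\xi)|\le L_{f,1}|\phi_1-\phi_2|(t,\xi)+L_{f,2}|D\phi_1-D\phi_2|(t,\xi)+L_{f,3}|D^2\phi_1-D^2\phi_2|(t,\xi).
\]
Taking the $L^2(0,T)$-norm in $t$ and applying the triangle inequality there bounds the square function of $g$, pointwise in $\xi$, by $L_{f,1}$, $L_{f,2}$, $L_{f,3}$ times the square functions of $\phi_1-\phi_2$, $D\phi_1-D\phi_2$ and $D^2\phi_1-D^2\phi_2$. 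Taking $L^p(\OO)$-norms and using the triangle inequality in $L^p(\OO)$ splits the right-hand side into three terms, each of which is, by \eqref{eq:gammaBfunc}, comparable to the corresponding $\g$-norm appearing in the statement. The left-hand side is likewise comparable to $\|g\|_{\g(0,T;L^p(\OO))}$, and since the right-hand side is finite this simultaneously shows that $g$ defines an element of $\g(0,T;L^p(\OO))$ and yields the asserted estimate, the constant $C$ collecting the equivalence constants from \eqref{eq:gammaBfunc}. Note that the constant $f(0,0,0)$ cancels in the difference, so every square function above is finite irrespective of whether $\OO$ has finite measure.

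The one point requiring care — and the main obstacle — is the correct interpretation of \eqref{eq:gammaBfunc} in the vector-valued terms, where the naive target lattice $L^p(\OO;\R^d)$ (or $L^p(\OO;\R^{d\times d})$) carries a Euclidean fibre norm that does not match the pointwise lattice structure. I would circumvent this by absorbing the finite-dimensional Hilbert spaces into the base measure space: identifying $L^2(0,T;\R^d)$ with $L^2\big((0,T)\times\{1,\dots,d\}\big)$ and using the $\g$-Fubini identification $\g\big(0,T;L^p(\OO;\R^d)\big)\cong\g\big((0,T)\times\{1,\dots,d\};L^p(\OO)\big)$, so that the scalar characterisation \eqref{eq:gammaBfunc} applies with $X=L^p(\OO)$ and delivers exactly $\|(\int_0^T|D\phi_1-D\phi_2|_{\R^d}^2\,dt)^{1/2}\|_{L^p(\OO)}$, and analogously for the Hessian term. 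Since all norms on $\R^d$ and $\R^{d\times d}$ are equivalent, the particular norm chosen in the Lipschitz hypothesis is immaterial and its effect is absorbed into $C$.
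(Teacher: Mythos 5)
Your proof is correct and follows essentially the same route as the paper's: pass to the square function norm $L^p(\OO;L^2(0,T))$ via \eqref{eq:gammaBfunc}, apply the pointwise Lipschitz bound together with the triangle inequalities in $L^2(0,T)$ and $L^p(\OO)$, and return to the $\g$-norms by a second application of \eqref{eq:gammaBfunc}. The only difference is that you make explicit the measurability considerations and the identification $\g(0,T;L^p(\OO;\R^d))\cong\g((0,T)\times\{1,\dots,d\};L^p(\OO))$ needed to apply the square function characterisation to the vector-valued terms, points which the paper's two-line proof leaves implicit.
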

\begin{proof}
By \eqref{eq:gammaBfunc} we have
\begin{align*}
\|f(\phi_1, & D\phi_1, D^2 \phi_1) - f(\phi_2, D\phi_2, D^2
\phi_2)\|_{\g(0,T;L^p(\OO))}
\\ & \eqsim_{p}  \|f(\phi_1,  D\phi_1, D^2 \phi_1) - f(\phi_2, D\phi_2, D^2
\phi_2)\|_{L^p(\OO;L^2(0,T))}
\\ & \leq \Big\|L_{f,1} |\phi_1 - \phi_2| + L_{f,2} \|D \phi_1 - D\phi_2\| +
L_{f,3} |D^2\phi_1 - D^2\phi_2|\Big\|_{L^p(\OO;L^2(0,T))} \\ & \leq L_{f,1}
\|\phi_1 - \phi_2\|_{L^p(\OO;L^2(0,T))} + L_{f,2} \|D \phi_1 -
D\phi_2\|_{L^p(\OO;L^2(0,T;\R^d))} \\ & \qquad \qquad +L_{f,3} \|D^2\phi_1 -
D^2\phi_2\|_{L^p(\OO;L^2(0,T;\R^{d\times d}))}.
\end{align*}
Now the result follows from another application of \eqref{eq:gammaBfunc}.
\end{proof}

If $s=0$, one can allow nonlinearities of the form
\[F(v) = f(v, Dv,D^2v),\] where
\begin{align*}
|&f(x, a,M) - f(y, b,N)|  \leq L_{f,1} |x-y| + L_{f,2} |a-b|+L_{f,3} |M-N|,
\end{align*}
for $x, y\in \R$ and $M,N\in \R^{d\times d}$, with $L_{f,3}$
sufficiently small. Indeed, \eqref{eq:exF} follows from Lemma
\ref{lem:glipschitz}.

\begin{remark}
This example can be extended to general second order or higher order operators elliptic operators as in \cite{DuSi};
appropriate changes to the nonlinearities $F$ and $B$ should be made. Using
Theorem \ref{thm:SE2}, one may also allow time-dependent operators $A(t)$.
\end{remark}

\section{Comparison}

The theory presented here provides an alternative approach to
the theory of maximal $L^p$-regularity (as presented in
\cite{KuWe, We} and the references therein) and stochastic
maximal $L^p$-regularity (developed recently in \cite{NVW10, NVW11eq}).
A detailed comparison of the latter with known stochastic
maximal regularity results in the literature
(such as in, e.g., \cite{Brz1, DPL, DPZ, DeschLonden, BrzHau, KimKH08, Kry, MiRo04}.)
have been given in \cite{NVW10, NVW11eq}. We also mention the papers \cite{Hofmanova, JentzenRoeckner}, where
higher order regularity in the space variables is obtained under additional structural assumptions on the
nonlinearities.

The main differences between the approach presented here and that in \cite{NVW10, NVW11eq}
are the replacement of the Bochner norms by $\g$-norms
and the replacement of the trace space $X_{1-\frac1p,p}$ (with $p>2$) by $X$ in the deterministic case and
by $X_{\frac{1}{2}}$ in the stochastic case.
Thus, the theory presented here allows rougher initial values, but the price to pay is
that pathwise solutions are obtained in $\g(0,T;X_1)$ instead of $L^p(0,T;X_1)$.
A further difference is that we can handle more general Banach spaces and that,
in the stochastic case, we obtain estimates for the moments of all orders $0<p<\infty$
instead of only for $2 < p<\infty$ ($2\le p<\infty$ in case $X$ is a Hilbert space).

In the deterministic case (Theorem \ref{thm:EE}) we only needed to assume that $X$ has finite cotype;
in contrast, in the results of \cite{KuWe, We} the space $X$ is
assumed to be UMD. In the stochastic case we can allow UMD Banach spaces $X$ with property $(\a)$
(this includes all spaces
isomorphic to a closed subspace of $L^q(\mu)$ with $q\in (1,\infty)$), while
the results of \cite{NVW11eq} could (so far) only be made to work
only when $X$ is isomorphic to a closed subspace of a space $L^q(\mu)$
with $q\in [2,\infty)$ (or a slight generalisation thereof, see \cite{NVW10b}).

In the following two subsections we compare (for stochastic equations) the theory presented
in this paper with the results in \cite{NVW11eq}.

\subsection{Part I}

Let us consider the example of Section \ref{sec:appl} (with $s=-1$) in more detail.
Initial values are taken in $L^q(\R^d)$ and the solutions are
in $ L^p(\O; H^{1,q}(\R^d; L^2(0,T)))$ for any $p\in (0,\infty)$.
Here, $q\in (1,\infty)$ may be chosen arbitrarily.
In contrast, the stochastic maximal $L^p$-regularity result of
\cite{NVW11eq} allows  initial values in $B^{\frac12-\frac1p}_{q,p}(\R^d)$ and then returns solutions
in $L^p(\O; L^p(0,T;H^{1,q}(\R^d)))$ for any $p\in (2,\infty)$. Here, we had to restrict to values
$q\in [2,\infty)$ ($p=2$ being allowed if $q=2$).

Thus we see that, in this example, the pathwise regularity in $L^p(0,T;H^{1,q}(\R^d))$ of \cite{NVW11eq} is replaced here
with pathwise regularity in $H^{1,q}(\R^d; L^2(0,T))$. The case $q\in (1,2)$ is not covered by
the results of \cite{NVW11eq}; here, for these exponents the underlying space has cotype $2$ and
therefore, for $q\in (1,2)$ we actually pick up pathwise regularity in $L^2(0,T;H^{1,q}(\R^d))$.
In the case $q=2$, both theories apply and prove pathwise regularity in $L^2(0,T;H^{1,2}(\R^d))$
for initial conditions in $L^2(\R^d)$.
 The results are summarized in the following table.

\medskip
\begin{center}
\begin{tabular}{|l|c|c|}
\hline
   & $\g$-theory	with $q\in (1,2]$	& $L^p$-theory with $p\in (2,\infty)$, $q\in [2,\infty)$
  \\ \hline
  initial value	& $u_0 \in L^q(\R^d)$ 		& $u_0 \in B^{\frac12-\frac1p}_{q,p}(\R^d)$
  \\ \hline
 pathwise regularity 	& $L^2(0,T;H^{1,q}(\R^d))$ & $L^p(0,T;H^{1,q}(\R^d))$
  \\ \hline
 trace regularity	& $C([0,T];L^q(\R^d))$	& $C([0,T];B^{\frac12-\frac1p}_{q,p}(\R^d))$
\\ \hline
\end{tabular}
\end{center}

\subsection{Part II}
In order two compare the theory here with the theory of \cite{NVW11eq} with the stochastic heat equation
on $\R^d$,
\[ \left\{
   \begin{array}{ll}
     du & = \Delta u +  B(u) \, d W, \\
     u(0,\cdot) & = u_0,
   \end{array}
 \right.
\]
where $\Delta$ is the Laplacian on $X=H^{s-1,q}(\R^d)$ with domain $X_1=H^{s+1,q}(\R^d)$.
We assume that $B:H^{s+1,q}(\R^d)\to H^{s,q}(\R^d)$ is given by
\[B(u)(x) = b(x) \cdot \nabla u\]
with $b\in C^\infty_b(\R^d)$. Finally $W:\R_+\times\O\to \R$ is a standard Brownian
motion and $u_0:\O\to S'(\R^d)$ is an $\F_0$-measurable initial value.

In order to make a good comparison with stochastic maximal $L^p$-regularity,
let us apply the results of \cite{NVW11eq} to the state space
$Y_0 = X_{\frac1p-\frac12}$, so that the trace space becomes $Y_{1-\frac1p,p}
= X_{\frac12,p}$.

As we have seen, for $u_0\in L^0(\O;X_{\frac12})$, the stochastic maximal $\g$-regularity result of Theorem \ref{thm:SE}
produces solutions $U$ which are pathwise in $\g^{\theta}(0,T;X_{1-\theta})$ for all
$\theta\in [0, \frac{1}{2})$. In particular, by Remark \ref{rem:improvedembedding}, pathwise one has $U\in H^{\theta+\frac1q - \frac12,q}(0,T;X_{1-\theta})$.
On the other hand, for $u_0\in Y_{1-\frac1p,p}$, the stochastic maximal $L^p$-regularity results of \cite{NVW11eq}
provide solutions $U$ which are pathwise in $H^{\theta',p}(0,T;Y_{1-\theta'})$ for all $\theta'\in [0, \frac{1}{2})$.
Choosing $\theta' =
\theta+\frac1q-\frac12$ leads to solutions $U$ pathwise
in $H^{\theta+\frac1q-\frac12,p}(0,T;X_{1+\frac1p-\frac1q - \theta}).$ Taking
$p=q$ (which is allowed if $q\in (2,\infty)$), this becomes
\[U\in H^{\theta+\frac1q-\frac12,q}(0,T;X_{1 - \theta}) \ \hbox{for} \  \theta\in [\tfrac12-\tfrac1q,\tfrac12).\]

A similar comparison can be made for the space regularity and the trace regularity in both cases.
The results are summarized in the following table.

\medskip
\begin{center}
\begin{tabular}{|l|c|c|}
\hline
   & $\g$-theory	with $q\in [2, \infty)$	& $L^p$-theory for $p\in (2,\infty)$, $q\in [2,\infty)$
  \\ \hline
  initial value	& $u_0 \in H^{s,q}(\R^d)$ 		& $u_0 \in B^{s}_{q,p}(\R^d)$
  \\ \hline
 pathwise regularity 	& $H^{\theta+\frac1q-\frac12,q}(0,T;H^{s+1-2\theta,q}(\R^d))$ & $H^{\theta',p}(0,T; H^{s+\frac2p -2\theta',q}(\R^d))$
  \\ \hline
 trace regularity	& $C([0,T];H^{s,q}(\R^d)))$      & $C([0,T];B^{s}_{q,p}(\R^d))$
\\ \hline
\end{tabular}
\end{center}
Here $\theta,\theta'\in [0,\tfrac12)$.

\medskip

The main smoothness exponents are comparable. However, there is a trade-off:
\begin{itemize}
\item The space regularity holds with smoothness exponent $s+1-2\theta$ for stochastic maximal $\g$-regularity versus $s+\frac2p -2\theta'$ for stochastic maximal $L^p$-regularity.
\item The time regularity holds with smoothness exponent $\theta+\frac1q-\frac12$ for stochastic maximal $\g$-regularity versus $\theta'$ for stochastic maximal $L^p$-regularity.
\end{itemize}
Summarizing,
one might say that for $L^q(\R^d)$-spaces with $q\in [2, \infty)$, stochastic
maximal $\g$-regularity gives more space-regularity and less time regularity
than stochastic maximal $L^p$-reg\-ularity.

\def\polhk#1{\setbox0=\hbox{#1}{\ooalign{\hidewidth
  \lower1.5ex\hbox{`}\hidewidth\crcr\unhbox0}}} \def\cprime{$'$}

\end{document}